
\NeedsTeXFormat{LaTeX2e}

\documentclass[11pt,reqno]{amsart}
\usepackage{amsmath}
\usepackage{amssymb}
\usepackage{amstext}
\usepackage{amsfonts}
\usepackage{amsthm}
\usepackage{ifthen}
\usepackage{xspace}
\usepackage{comment}
\usepackage{graphicx}
\usepackage{mathrsfs}
\usepackage{enumerate}
\usepackage{breakcites}
\usepackage{xcolor}

\addtolength{\topmargin}{-0.8cm}
\addtolength{\oddsidemargin}{-1.0cm}
\addtolength{\evensidemargin}{-1.0cm}
\addtolength{\textheight}{1.8cm}   
\addtolength{\textwidth}{1.1cm}

\lefthyphenmin=5
\righthyphenmin=5

\flushbottom




\numberwithin{equation}{section}  

\newtheorem{punkt}{}[section]

\theoremstyle{plain}
\newtheorem{corollary}[punkt]{Corollary}
\newtheorem{lemma}[punkt]{Lemma}
\newtheorem{proposition}[punkt]{Proposition}
\newtheorem{theorem}[punkt]{Theorem}

\theoremstyle{definition}
\newtheorem{remark}[punkt]{Remark}

\newtheorem{examples}[punkt]{Examples}

\newtheorem{definition}[punkt]{Definition}

\theoremstyle{plain}
\newtheorem*{corollary*}{Corollary}
\newtheorem*{lemma*}{Lemma}
\newtheorem*{proposition*}{Proposition}
\newtheorem*{theorem*}{Theorem}

\theoremstyle{definition}
\newtheorem*{remark*}{Remark}
\newtheorem*{remarks*}{Remarks}
\newtheorem*{example*}{Example}
\newtheorem*{examples*}{Examples}
\newtheorem*{problem*}{Problem}
\newtheorem*{problems*}{Problems}
\newtheorem*{question*}{Question}
\newtheorem*{questions*}{Questions}
\newtheorem*{definition*}{Definition}
\newtheorem*{conjecture*}{Conjecture}
\newtheorem*{assumption*}{Assumption}
\newtheorem*{assumptions*}{Assumptions}
\newtheorem*{construction*}{Construction}


\def\mycmplx{\mathbb{C}}

\def\mynat{\mathbb{N}}

\def\myreal{\mathbb{R}}

\def\mylebesgue{\lambda \mskip -8mu \lambda}

\def\myd{\mathcal{D}}

\def\myh{\mathcal{H}}

\def\myl{\mathcal{L}}

\def\myo{\mathcal{O}}

\def\myq{\mathcal{Q}}

\def\mys{\mathcal{S}}

\def\A{\mathbf{A}}
\def\B{\mathbf{B}}
\def\C{\mathbf{C}}

\def\F{\mathbf{F}}
\def\G{\mathbf{G}}
\def\H{\mathbf{H}}
\def\I{\mathbf{I}}
\def\J{\mathbf{J}}
\def\M{\mathbf{M}}

\def\R{\mathbf{R}}
\def\T{\mathbf{T}}
\def\U{\mathbf{U}}
\def\V{\mathbf{V}}
\def\W{\mathbf{W}}
\def\X{\mathbf{X}}
\def\Y{\mathbf{Y}}
\def\Z{\mathbf{Z}}
\def\DDelta{\mathbf{\Delta}}

\def\e{\mathbf{e}}

\def\v{\mathbf{v}}

\def\ee{\mathbb{E}}
\def\pp{\mathbb{P}}

\newcommand{\E}{\mathbb{E}}

\def\rank{\qopname\relax{no}{rank}}
\def\trace{\qopname\relax{no}{trace}}

\def\re{\qopname\relax{no}{Re}\,}
\def\im{\qopname\relax{no}{Im}\,}

\def\eg{e.g.\@\xspace}
\def\ie{i.e.\@\xspace}

\def\i{{\operatorname{i}}}

\begin{document}

\renewcommand{\thefootnote}{}

\title[Sums of Products of Non-Hermitian Random Matrices]{Limiting Spectral Distributions \\ of Sums of Products \\ of Non-Hermitian Random Matrices}
\author[H.~K\"osters]{H.~K\"osters$^{1,3}$}
\author[A.~Tikhomirov]{A.~Tikhomirov$^{2,3}$}

\begin{abstract}
For fixed $l,m \ge 1$,
let $\X_n^{(0)},\X_n^{(1)},\dots,\X_n^{(l)}$ be independent random $n \times n$ matrices
with independent entries, let $\F_n^{(0)} := \X_n^{(0)} (\X_n^{(1)})^{-1} \cdots (\X_n^{(l)})^{-1}$,
and let $\F_n^{(1)},\hdots,\F_n^{(m)}$ be independent random matrices of the same form as $\F_n^{(0)}$.
We~investigate the limiting spectral distributions of the matrices 
$\F_n^{(0)}$ and $\F_n^{(1)} + \hdots + \F_n^{(m)}$ as $n \to \infty$. 
Our main result shows that the sum $\F_n^{(1)} + \hdots + \F_n^{(m)}$ has the same limiting eigenvalue distribution as $\F_n^{(0)}$
after appropriate rescaling. This extends recent findings by Tikhomirov and Timushev (2014).
Furthermore, we show that the limiting eigenvalue distribution of the matrices $\F_n^{(0)}$ 
is stable with respect to a suitably defined convolution $\oplus$.

To obtain our results, we apply the general framework 
recently introduced in G\"otze, K\"osters and Tikhomirov (2014)
to sums of products of independent random matrices and their inverses.
We establish the universality of the limiting singular value and eigenvalue distributions,
and we provide a closer description of the limiting distributions
in terms of free probability theory.
\end{abstract}

\keywords{non-hermitian random matrices; limiting spectral distributions; free probability theory;
stable distributions.\\[-7pt]}

\maketitle

\footnotetext{1) Department of Mathematics, Bielefeld University, Germany}
\footnotetext{2) Department of Mathematics, Komi Science Center of Ural Division of RAS, \\ Syktyvkar State University, Russia}
\footnotetext{3) Research supported by CRC 701.}

\medskip


\section{Introduction and Summary}
\label{sec:introduction}

The investigation of the asymptotic spectral distributions of random matrices
with independent entries is a major topic in random matrix theory. In recent years 
sums and products of independent \emph{non-Hermitian} random matrices 
with indepen\-dent entries have found increasing attention;
%
%
see e.g.~\cite{ARRS:2013,AB:2012,ABK:2014,AGT:2010,BD:2013,
Bordenave:2011,BJW:2010,BJLNS:2011,Forrester:2014,FL:2014,GKT:2014,GT:2010b,J:2011,
KS:2014,KZ:2014,LWZ:2014,MNPZ:2014,OS:2011,Rogers:2010,Tikhomirov:2013,TT:2013,TT:2014},
and also the survey papers \cite{AI:2015,Burda:2013} and the references therein.
In particular, the paper \cite{GKT:2014} provides a~general approach 
for the investigation of the limiting (global) spectral distributions
of products of independent random matrices with independent entries.
Furthermore, the paper \cite{TT:2014} shows that this approach proves useful 
for the investigation of sums of products of indepen\-dent random matrices as~well.
The main aim of the present paper is to~show that 
certain products of independent random matrices
give rise to random matrices with \emph{stable} limiting eigenvalue distributions,
in the sense that the sums of several independent copies of these products 
have the same limiting eigenvalue distribution after appropriate rescaling.

\pagebreak[2]

Throughout this paper, for each $n \geq 1$, let $\X_n^{(1)},\X_n^{(2)},\X_n^{(3)},\dots$ 
be independent random matrices of size $n \times n$ with independent entries.
More precisely, we~assume that 
\begin{align}
\label{eq:girko-ginibre}
\X_n^{(q)} = (\tfrac{1}{\sqrt{n}} X_{jk}^{(q)})_{jk=1,\hdots,n} \,,
\end{align}
where $(X_{jk}^{(q)})_{j,k,q \in \mynat}$ is a family of independent real~or~complex 
random variables such~that 
\begin{align}
\label{eq:real-moments}
\ee X_{jk}^{(q)} = 0, \ \ee (X_{jk}^{(q)})^2 = 1 \quad\text{in the real case}
\end{align}
and
\begin{align}
\label{eq:complex-moments}
\ee X_{jk}^{(q)} = 0, \ \ee (X_{jk}^{(q)})^2 = 0, \ \ee |X_{jk}^{(q)}|^2 = 1 \quad\text{in the complex case} \,,
\end{align}
and we additionally assume that this family is uniformly square-integrable, i.e.\@
\begin{align}
\label{eq:uniform-integrability}
\lim_{a \to \infty} \sup_{j,k,q \in \mynat} \ee \big( |X_{jk}^{(q)}|^2 \ \pmb{1}_{\{ |X_{jk}^{(q)}| \geq a \}} \big) = 0 \,.
\end{align}
In this case we also say the matrices $\X_n^{(q)}$ are independent \emph{Girko--Ginibre matrices}.
In the special case where the entries have real or complex Gaussian distributions,
we usually write $\Y_n^{(q)} = (\tfrac{1}{\sqrt{n}} Y_{jk}^{(q)})_{jk=1,\hdots,n}$ 
instead of $\X_n^{(q)} = (\tfrac{1}{\sqrt{n}} X_{jk}^{(q)})_{jk=1,\hdots,n}$ and call 
the matrices $\Y_n^{(q)}$ \emph{Gaussian random matrices} or \emph{Ginibre matrices}.
Note that the assumption \eqref{eq:uniform-integrability} is clearly satisfied
in this special case, the random variables $Y^{(q)}_{jk}$ being i.i.d.

We are interested in the spectral distributions 
of sums of products of the matrices $\X_n^{(q)}$ and their inverses,
such as 
$\X_n^{(1)} \X_n^{(2)} + \X_n^{(3)} \X_n^{(4)}$
or
$\X_n^{(1)} (\X_n^{(2)})^{-1} + \X_n^{(3)} (\X_n^{(4)})^{-1}$,
in~the limit~as $n \to \infty$. 
More precisely, we consider matrices of the form
\begin{align}
\label{eq:sums-of-products}
\F_n := \sum_{q=1}^{m} \F_n^{(q)} := \sum_{q=1}^{m} \prod_{r=1}^{l} (\X_n^{((q-1)l+r)})^{\varepsilon_{r}} \,,
\end{align}
where $m \in \mynat$,  $l \in \mynat$, and $\varepsilon_1,\hdots,\varepsilon_l \in \{ +1,-1 \}$ are fixed.
(Thus, the matrices $\F_n^{(q)}$ are independent random matrices of the same form
as the matrix $\prod_{r=1}^{l} (\X_n^{(r)})^{\varepsilon_{r}}$.)
Let us note that under the~above assumptions 
\eqref{eq:girko-ginibre} -- \eqref{eq:uniform-integrability},
for fixed $r \in \mynat$, \linebreak[2]
$\X_n^{(r)}$ is invertible with probability $1+o(1)$ as $n \to \infty$
(see \eg Lemma \ref{lemma:C1}), so~that $\F_n$ is well-defined
with probability $1+o(1)$ as $n \to \infty$.
Since we are interested in limit theorems in~probability,
this is sufficient for our purposes.

Furthermore, write $\lambda_1,\hdots,\lambda_n$ for the eigenvalues of $\F_n$
and $\mu_n := \tfrac1n \sum_{j=1}^{n} \delta_{\lambda_j}$ 
for the \emph{(empirical) eigenvalue distribution} of $\F_n$.
Note that $\mu_n$ will in general be a probability measure
on the complex plane, $\F_n$~being non-Hermitian.
We are inter\-ested in the problem(s) whether there exists 
a non-random probability measure $\mu$ on the complex plane
such that $\mu_n \to \mu$ weakly (in probability, say)
and whether this probability measure $\mu$ can be described more explicitly
(in terms of its density or one of its transforms, say).
If existent, the probability measure $\mu$ is also called
the \emph{limiting eigenvalue distribution} of the matrices $\F_n$.


As is well known in random matrix theory,  
the limiting eigenvalue distribution $\mu$ is usually \emph{universal},
i.e.\@ it does not depend on the distributions of the matrix entries
apart from a~few moment conditions \pagebreak[2]
as in \eqref{eq:real-moments} -- \eqref{eq:uniform-integrability}.
In our situation, we have the following universality result:

\begin{proposition}
\label{prop:simple-limit-theorem}
Let the matrices $\F_n$ be defined as in \eqref{eq:sums-of-products}.
Then there exists \linebreak a non-random probability measure $\mu$
on $\mycmplx$ such that $\mu_n \to \mu$ weakly in~prob\-ability,
and the limiting eigenvalue distribution $\mu$ is same as in the Gaussian case,
i.e.\@ for the corresponding matrices $\F_n$ derived from Gaussian random matrices 
$\Y_n^{(1)},\Y_n^{(2)},\Y_n^{(3)},\hdots.$
\end{proposition}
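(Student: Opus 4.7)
The proof will follow the standard \emph{Hermitization} strategy (going back to Girko), combined with the general framework of G\"otze, K\"osters and Tikhomirov \cite{GKT:2014}. The strategy is to replace the non-Hermitian eigenvalue problem for $\F_n$ by a family of Hermitian problems parametrized by $z\in\mycmplx$: for Lebesgue-a.e.\@ $z$, one studies the (symmetrized) singular value distribution $\nu_n^{(z)}$ of $\F_n-z\I$ and its logarithmic potential
\[
U_{\mu_n}(z) \;=\; \int\log|w-z|\,d\mu_n(w) \;=\; \tfrac{1}{n}\log|\det(\F_n-z\I)| \;=\; \int_0^{\infty}\log s\,d\nu_n^{(z)}(s).
\]
If the $\nu_n^{(z)}$ converge weakly in probability to some deterministic $\nu^{(z)}$ and the $\log$ integrals behave (i.e.\@ the small singular values can be controlled uniformly), then $U_{\mu_n}\to U_\mu$ a.e., from which $\mu_n\to\mu$ weakly in probability follows.

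The plan is therefore in three steps. First, I would apply the framework of \cite{GKT:2014} to the Hermitization
\[
\H_n(z) \;=\; \begin{pmatrix} 0 & \F_n-z\I \\ (\F_n-z\I)^{*} & 0 \end{pmatrix}
\]
to derive, via a fixed-point system for the associated Stieltjes transforms, weak convergence in probability of $\nu_n^{(z)}$ to a deterministic measure $\nu^{(z)}$ for every fixed $z\in\mycmplx$. Since $\F_n$ is a polynomial-plus-inverse expression in the Girko--Ginibre blocks $\X_n^{(r)}$, one expands $\H_n(z)$ into a linear pencil in the $\X_n^{(r)}$ and their inverses; the inverses are handled by the standard trick of enlarging the block structure so that the resulting matrix is again a polynomial pencil. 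Second, I would prove universality (i.e.\@ independence of $\nu^{(z)}$ from the entry distribution apart from \eqref{eq:real-moments}--\eqref{eq:uniform-integrability}) by a Lindeberg-type replacement argument at the level of the Stieltjes transform of $\H_n(z)$, replacing the $X^{(q)}_{jk}$ one at a~time by their Gaussian counterparts, precisely as in \cite{GKT:2014,TT:2014}. This identifies $\nu^{(z)}$ with the limit for the Gaussian model.

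Third, and this is the main obstacle, I would establish uniform integrability of $\log s$ with respect to $\nu_n^{(z)}$ near $s=0$ and near $s=\infty$. The large-$s$ tail is harmless (bounded by polynomial moment estimates for the operator norms of the $\X_n^{(r)}$ and the $(\X_n^{(r)})^{-1}$, the latter coming from the smallest singular value bounds already alluded to in Lemma \ref{lemma:C1}). The small-$s$ tail is the technical heart: one needs a polynomial lower bound of the form $s_{\min}(\F_n-z\I)\ge n^{-C}$ with probability $1-o(1)$, which, for a sum of products involving inverses, requires combining Rudelson--Vershynin/Tao--Vu type least singular value estimates for each factor with a deterministic argument bounding $s_{\min}$ of a sum-of-products expression in terms of the individual $s_{\min}$'s and norms. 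Once these bounds are in place, a standard truncation in the $\log$ integral, together with the weak convergence from step one and the universality from step two, gives $U_{\mu_n}(z)\to U_\mu(z)$ a.e.\@ in probability, hence $\mu_n\to\mu$ weakly in probability, with $\mu$ identical to the Gaussian limit. This reduces the remaining problem to computing $\mu$ in the Gaussian case, which is carried out in the later parts of the paper via free probability.
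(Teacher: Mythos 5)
Your overall strategy coincides with the paper's: Girko's Hermitization plus logarithmic potentials, the universality framework of \cite{GKT:2014} with a Lindeberg swap at the level of Stieltjes transforms, least--singular--value control to justify the $\log$ integrals, and identification of the Gaussian limit by free probability. Two of your intermediate steps, however, differ from what the paper does, and as written they do not go through.

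First, the treatment of the inverses. You propose to absorb $(\X_n^{(r)})^{-1}$ into an enlarged linear (polynomial) pencil and then run the Lindeberg replacement. The difficulty is that the Lindeberg swap requires uniform bounds on the first three derivatives of the resolvent traces with respect to the entries $X^{(q)}_{jk}$ (Condition~B in Section~5), and for an expression containing a raw inverse these derivatives involve $\|(\X_n^{(r)})^{-1}\|$ to high powers, which has no uniformly bounded moments. Linearization does not remove this problem: the Stieltjes transform you need is read off from a corner of the resolvent of the enlarged pencil at a \emph{structured} spectral parameter in which only one block carries a positive imaginary part, so the resolvent norm is not controlled by $1/\im z$, and one is back to controlling the near--singularity of $\X_n^{(r)}$. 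The paper instead regularizes each inverse as $(\X^*\X+t\I)^{-1}\X^*$, proves the derivative bounds for fixed $t>0$ (Lemma~\ref{lemma:B}), and separately shows that the regularization error in the Stieltjes transform vanishes as $t\to0$ uniformly in $n$ (Condition~A, Lemma~\ref{lemma:A}); some such smoothing device is indispensable and is missing from your sketch.

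Second, the lower bound on $s_{\min}(\F_n-z\I)$. There is no deterministic bound for the least singular value of a \emph{sum} in terms of the least singular values and norms of the summands (the summands can nearly cancel), so the ``deterministic argument bounding $s_{\min}$ of a sum-of-products expression in terms of the individual $s_{\min}$'s and norms'' does not exist. The paper's Lemma~\ref{lemma:C++} circumvents this by the algebraic factorization
\begin{align*}
\A\X\B+\C-\alpha\I=\A\big(\X+\A^{-1}(\C-\alpha\I)\B^{-1}\big)\B\,,
\end{align*}
which converts the sum into a product whose middle factor is a Girko--Ginibre matrix shifted by a matrix \emph{independent} of $\X$ and polynomially bounded in norm; one then conditions and applies the shifted least--singular--value and intermediate--singular--value bounds (Lemmas~\ref{lemma:C1x} and~\ref{lemma:C2x}). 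Note also that the bound on $s_{\min}$ alone is not sufficient: Condition~($C2$), i.e.\@ control of $\tfrac1n\sum_{n_1\le j\le n_2}|\log s_j|$ over the intermediate range of indices, is equally essential for the convergence of the logarithmic potentials, and it requires the same factorization trick. With these two repairs your outline reduces to the paper's proof of Theorem~\ref{thm:limit-theorem}.
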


For a closer description of the limiting eigenvalue distribution $\mu$,
see Theorem~\ref{thm:limit-theorem} below. For the moment, we confine ourselves
to a few comments.

First of all, by universality, 
it remains to find the limiting eigenvalue distribution $\mu$ in the Gaussian case.
Now, the Gaussian random matrices $\Y_n^{(r)}$ are \emph{bi-unitary invariant},
i.e.\@ for any unitary matrices $\U_n^{(r)}$ and $\V_n^{(r)}$
of size $n \times n$, the matrices $\U_n^{(r)} \Y_n^{(r)} \V_n^{(r)}$ 
have the same (matrix-valued) distributions as the matrices $\Y_n^{(r)}$. 
This clearly implies that the limiting eigenvalue distribution $\mu$ of the matrices $\F_n$, if~existent, 
will be a~\emph{rotation-invariant} probability measure on the complex plane, 
i.e.\@ for any $u \in \mathbb{T}$ $:= \{ z \in \mathbb{C} : |z|=1 \}$,
the induced probability measure of $\mu$ under the mapping $z \mapsto uz$
will coincide with~$\mu$. Hence, if $\mu$ has a density $f$
(which will be the case in all our examples),
this density may be supposed to be rotation-invariant as well,
and we write $f(r)$ instead of $f(z)$, with $r := |z|$.

\pagebreak[2]

Let us mention some relevant results from the literature.

\begin{examples} \
\label{ex:theexamples}

(a) (\emph{Circular Law}) \ 
Let $\F_n = \X_n^{(1)}$. Then $f(r) := \frac{1}{\pi} \, \pmb{1}_{[0,1]}(r)$,
i.e.\@ $\mu$ is the uniform distribution on the unit disk.

(b) Let $\F_n = \X_n^{(1)} + \hdots + \X_n^{(m)}$.
Then $\F_n$ is a random matrix with independent entries
of mean $0$ and variance $m/n$, so, by simple rescaling, 
$f(r) := \tfrac{1}{m\pi} \, \pmb{1}_{[0,\sqrt{m}]}(r)$.
In~particular, for the rescaled matrices $\tfrac{1}{\sqrt{m}} \F_n$,
the~limiting eigenvalue distribution is again the uniform distribution on the unit disk.

(c) 
Let $\F_n = \X_n^{(1)} \X_n^{(2)}$. 
Then $f(r) = \frac{1}{2\pi r} \, \pmb{1}_{[0,1]}(r)$,
i.e.\@ $\mu$ is the induced dis\-tribution of the uniform distribution
on the unit disk under the mapping $z \mapsto z^2$.
See e.g.\@ \cite[Section 8.2.2]{GKT:2014} for a `simple' derivation.

(d)
Let $\F_n = \X_n^{(1)} \X_n^{(2)} + \hdots + \X_n^{(2m-1)} \X_n^{(2m)}$. 
Then $f(r) = \frac{1}{\pi \sqrt{(m-1)^2+4r^2}} \, \pmb{1}_{[0,\sqrt{m}]}(r)$;
see \cite[Section 2]{TT:2014}.

(e) (\emph{Spherical Law}) \ 
Let $\F_n = \X_n^{(1)} (\X_n^{(2)})^{-1}$. Then $f(r) = \frac{1}{\pi(1+r^2)^2}$,
i.e. \linebreak $\mu$ is the spherical distribution on the complex plane
(which is, by definition, the induced distribution of the uniform distribution
on the $2$-dimensional sphere $S_2 := \{ x \in \mathbb{R}^3 : \|x\|=1 \}$
under stereographic projection to the complex plane $\mathbb{C} \simeq \mathbb{R}^2$).

(f) 
Let $\F_n = \X_n^{(1)} (\X_n^{(2)})^{-1} + \hdots + \X_n^{(2m-1)} (\X_n^{(2m)})^{-1}$. 
Then $f(r) = \frac{m^2}{\pi (m^2+r^2)^2}$; see~\cite[Section 3]{TT:2014}.
Thus, for the rescaled matrices $\tfrac{1}{m} \F_n$,
the~limiting eigenvalue distribution is again the spherical distribution on the complex plane.

\end{examples}

In view of examples (b) and (f), it seems natural to ask whether there exist
further examples of random matrices $\F_n^{(0)}$ such that for any $m \in \mynat$,
the sums of $m$ independent matrices of the same form as $\F_n^{(0)}$ 
have the same limiting eigenvalue distribution as the original 
random matrices $\F_n^{(0)}$, \pagebreak[2] after appropriate rescaling. 
We~will answer this question in the affirmative by proving the following result, 
which contains examples (b) and (f) as special cases:

\begin{theorem}
\label{thm:new}
Fix $m \in \mynat$ and $l \in \mynat_0$, let 
\begin{align}
\label{eq:F1L}
\F_n^{(0)} := (\X_n^{(0)}) (\X_n^{(1)})^{-1} \cdots (\X_n^{(l)})^{-1} \,,
\end{align}
where $\X_n^{(0)},\X_n^{(1)},\dots,\X_n^{(l)}$ are independent random matrices 
as in \eqref{eq:girko-ginibre} -- \eqref{eq:uniform-integrability},
and let $\F_n^{(1)},\hdots,\F_n^{(m)}$ be independent matrices
of the same form as $\F_n^{(0)}$.
Then the~matrices $m^{-(l+1)/2} (\F_n^{(1)} + \hdots + \F_n^{(m)})$ and $\F_n^{(0)}$
have the same limiting eigenvalue distribution $\mu$.
More precisely, we have $\mu = \myh(\sigma_s(\frac{2}{l+1}))$,
where $\sigma_s(\frac{2}{l+1})$ is the symmetric $\boxplus$-stable distribution 
with parameter $\frac{2}{l+1}$ (see Section~\ref{sec:boxplus-stable})
and $\myh(\sigma_s(\frac{2}{l+1}))$ is the associated rotation-invariant distribution on $\mycmplx$
(see Section~\ref{sec:FPT}).
\end{theorem}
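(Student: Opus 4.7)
The plan is to use Proposition~\ref{prop:simple-limit-theorem} to reduce to the Ginibre case, invoke the GKT:2014 framework to express the limiting eigenvalue distributions as $\myh$ applied to symmetric measures on $\myreal$, identify the symmetric measure attached to a single $\F_n^{(0)}$ as $\nu := \sigma_s(2/(l+1))$, and finally close the argument by the defining property of $\boxplus$-stability.

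By Proposition~\ref{prop:simple-limit-theorem} applied to each $\F_n^{(q)}$ individually as well as to the sum, it suffices to treat the case where all entries are Gaussian. In that situation the independent matrices $\Y_n^{(q)}$ are asymptotically $*$-free circular elements, the random matrices $\F_n^{(q)}$ converge in $*$-distribution to $m$ free copies of the same $R$-diagonal operator $f$, and the general framework (Theorem~\ref{thm:limit-theorem} below) attaches to each summand a symmetric measure on $\myreal$ whose image under $\myh$ equals its limiting eigenvalue distribution. A direct computation using free multiplicative convolution for the product structure in \eqref{eq:F1L} identifies this symmetric measure as $\nu = \sigma_s(2/(l+1))$. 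Moreover, by the same framework -- and ultimately by the fact that a sum of independent matrices becomes, after the $2n\times 2n$ Hermitization, a sum of asymptotically free Hermitian blocks whose spectra combine by $\boxplus$ -- the symmetric measure attached to $\F_n^{(1)}+\dots+\F_n^{(m)}$ is the $m$-fold free additive convolution $\nu^{\boxplus m}$. The very definition of a symmetric $\boxplus$-stable law of index $\alpha = 2/(l+1)$ (Section~\ref{sec:boxplus-stable}) then yields
\[
\nu^{\boxplus m} \;=\; D_{m^{1/\alpha}}\nu \;=\; D_{m^{(l+1)/2}}\nu,
\]
where $D_c$ denotes dilation by $c$. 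Consequently the symmetric measure attached to $m^{-(l+1)/2}(\F_n^{(1)}+\dots+\F_n^{(m)})$ is again $\nu$, and compatibility of $\myh$ with radial dilations (Section~\ref{sec:FPT}) gives it the same limiting eigenvalue distribution $\myh(\nu) = \mu$ as $\F_n^{(0)}$.

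The hardest part, I expect, will be verifying the hypotheses of the GKT:2014 framework uniformly in $z$: one needs, for Lebesgue-almost every $z \in \mycmplx$, convergence of the empirical singular-value distribution of $\F_n - zI$ together with uniform logarithmic integrability of its least singular value in $n$. The inverse factors $(\X_n^{(r)})^{-1}$ make the last point delicate, since a lower bound on the smallest singular value of $\F_n - zI$ must be combined with uniform upper bounds on the smallest singular values of each $\X_n^{(r)}$. Once these analytic bounds are in place (in the spirit of Lemma~\ref{lemma:C1}, together with the uniform square-integrability assumption~\eqref{eq:uniform-integrability}), the free-probabilistic chain described above promotes the stability identity $\nu^{\boxplus m} = D_{m^{(l+1)/2}}\nu$ to the desired convergence of empirical eigenvalue distributions.
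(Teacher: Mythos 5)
Your proposal is correct and follows essentially the same route as the paper: reduction to the Gaussian case by universality, computation of the $S$-transform of the symmetrized singular value distribution to identify it as $\sigma_s(\tfrac{2}{l+1})$, asymptotic freeness of the Hermitizations to get the $\boxplus^{m}$ for the sum, and then $\boxplus$-stability together with the dilation-equivariance of $\myh$. The technical issues you flag at the end (regularizing the inverses and the least-singular-value/logarithmic-integrability bounds) are precisely what the paper's Section~\ref{sec:limit-theorem} handles via Conditions A, B and C.
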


Let us note that for $l = 0$ and $l = 1$, we re-obtain the above-mentioned results
from \textsc{Tikhomirov} and \textsc{Timushev} \cite{TT:2014}.
Moreover, as we will see in Section~3, 
apart from a possible permutation of the exponents $\pm 1$,
the matrices $\F_n^{(0)}$ in Theorem~\ref{thm:new} are the only examples
of products of \emph{independent} Girko--Ginibre matrices and their inverses
such that for any $m \in \mynat$, $\F_n^{(0)}$ and $\F_n^{(1)} + \cdots + \F_n^{(m)}$ 
have the same limiting eigenvalue distribution after appropriate rescaling.
In particular, the matrices 
\begin{align}
\label{eq:FKL}
\F_n^{(0)} := \X_n^{(1)} \cdots \X_n^{(k)} (\X_n^{(k+1)})^{-1} \cdots (\X_n^{(k+l)})^{-1}
\end{align}
with $k > 1$ do not share this property.

\pagebreak[2]

However, the same limiting eigenvalue distributions 
may arise for products involving \emph{powers} of random matrices:

\begin{theorem}
\label{thm:newpower}
Fix $m \in \mynat$, $k \in \mynat_0$ and $l_1,\hdots,l_k \in \mynat$, let $l := l_1 + \hdots + l_k$ and
\begin{align}
\F_n^{(0)} := (\X_n^{(0)}) (\X_n^{(1)})^{-l_1} \cdot (\X_n^{(k)})^{-l_k} \,,
\end{align}
where $\X_n^{(0)},\X_n^{(1)},\dots,\X_n^{(k)}$ are independent random matrices 
as in \eqref{eq:girko-ginibre} -- \eqref{eq:uniform-integrability},
and let $\F_n^{(1)},\hdots,\F_n^{(m)}$ be independent matrices
of the same form as $\F_n^{(0)}$.
Then the~matrices $m^{-(l+1)/2} (\F_n^{(1)} + \hdots + \F_n^{(m)})$ and $\F_n^{(0)}$
have the same limiting eigenvalue~distribution $\mu$,
which is the same as in Theorem \ref{thm:new}.
\end{theorem}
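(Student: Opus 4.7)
The plan is to reduce Theorem~\ref{thm:newpower} to Theorem~\ref{thm:new} by showing that the limiting eigenvalue distribution of $\F_n^{(0)}$ is unchanged if each factor $(\X_n^{(j)})^{-l_j}$ is replaced by a product of $l_j$ independent inverses. As a first step I would establish a universality statement analogous to Proposition~\ref{prop:simple-limit-theorem}, both for $\F_n^{(0)}$ and for the sum $\F_n^{(1)}+\hdots+\F_n^{(m)}$. This should follow from the general framework of \cite{GKT:2014}: apply Girko Hermitization to reduce to the log-determinant of $(\F_n-zI)^*(\F_n-zI)$, establish universality of the limiting singular value distribution of $\F_n-zI$, and control the small singular values so as to pass from singular values back to eigenvalues. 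The problem is thereby reduced to the Gaussian case.

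In the Gaussian case, introduce the auxiliary matrix
\[
\widetilde{\F}_n^{(0)} := \Y_n^{(0)} \prod_{j=1}^{k}\prod_{s=1}^{l_j} (\Y_n^{(j,s)})^{-1},
\]
with all Ginibre factors independent. This matrix is a product of $l+1$ independent Ginibre-type factors, so Theorem~\ref{thm:new} applies to it directly and yields the target limit $\mu=\myh(\sigma_s(\tfrac{2}{l+1}))$ after rescaling by $m^{-(l+1)/2}$. It then remains to show that $\F_n^{(0)}$ and $\widetilde{\F}_n^{(0)}$ share the same limiting eigenvalue distribution. Both matrices are, in the sense of $*$-distribution, products of asymptotically free $R$-diagonal elements, whose limiting Brown measures are rotationally invariant and are determined, via the Haagerup--Larsen formula, by the limiting distribution of $|\cdot|^2$. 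Using the $S$-transform identity $S_{|a^{l_j}|^2}=S_{|a|^2}^{\,l_j}$ for an $R$-diagonal element $a$, together with multiplicativity of the $S$-transform under free products, one checks that in the limit
\[
S_{|\F_n^{(0)}|^2}(z)\;=\;S_{|\Y|^2}(z)\cdot \bigl(S_{|\Y^{-1}|^2}(z)\bigr)^{l}\;=\;S_{|\widetilde{\F}_n^{(0)}|^2}(z),
\]
and so the two matrices do indeed share the same limiting eigenvalue distribution.

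The principal obstacle is justifying the replacement argument within the random matrix framework of \cite{GKT:2014}, rather than only at the level of the limiting free probabilistic objects. Concretely, one must verify that the Mellin transform of the limiting symmetrized singular value distribution of $\F_n-zI$ takes the expected multiplicative form even when several factors share the same underlying matrix $\X_n^{(j)}$, and that the small-ball estimates on singular values underlying the Girko Hermitization step remain valid in the presence of repeated powers. Once these ingredients are in place, the substitution reduces everything to $\widetilde{\F}_n^{(0)}$, and Theorem~\ref{thm:new} finishes the proof.
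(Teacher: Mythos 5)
Your proposal is correct and follows essentially the same route as the paper: reduce to the Gaussian case by universality, then use the fact that the squared singular value distribution of the power $(\Y_n^{(j)})^{-l_j}$ converges to the $l_j$-fold free multiplicative power $(\gamma^{-1})^{\boxtimes l_j}$ (Proposition \ref{prop:asymptoticfreeness}\,(b), equivalently your identity $S_{|a^{l_j}|^2}=S_{|a|^2}^{l_j}$ for $R$-diagonal $a$), so that the $S$-transform computation collapses to the one in Theorem \ref{thm:new}; your auxiliary matrix $\widetilde\F_n^{(0)}$ is merely a repackaging of this step. The technical obstacles you flag are exactly the ones the paper addresses: Condition A via Lemma \ref{lemma:A} (which does not require independence of the surrounding factors from $\X_n$), and the small-singular-value estimates (Condition C) via Lemma \ref{lemma:C++} applied to the un-powered factor $\X_n^{(0)}$, which is precisely the extra condition \eqref{eq:extra-condition}.
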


To obtain the preceding results, we~apply the general framework from \cite{GKT:2014}
for the investigation of (global) limiting spectral distributions
to sums of products of~independent Girko--Ginibre random matrices and their inverses
(see Section~\ref{sec:limit-theorem}).
Related results for various special cases can be found e.\,g. in 
\cite{ARRS:2013,AGT:2010,Bordenave:2011,Burda:2013,BJW:2010,Forrester:2014,
GT:2010b,J:2011,MNPZ:2014,OS:2011,Tikhomirov:2013,TT:2014}.
In particular, in the Gaussian case, the limiting eigen\-value and singular value distributions
of the products \eqref{eq:FKL} were recently obtained in \cite{ARRS:2013} and \cite{Forrester:2014},
respectively.

Furthermore, to identify the limiting spectral distributions,
we use tools from free probability theory.
Here it is worth emphasizing that for the matrices occur\-ring in Theorems \ref{thm:new} and \ref{thm:newpower}
the limiting spectral distributions may be described relatively explicitly.
It seems that comparable results are available only in a~few special cases, 
see e.\,g. \cite{BL:2001,HL:2000,HS:2007,Lehner:2001}.
Let us mention, however, the very recent work \cite{BMS:2013,Speicher:2015,BSS:2015}
which provides an algorithm for calculating the Brown measures 
of general polynomials in free non-commutative random variables.
This yields many further examples where the limiting spectral distributions 
may now be determined.

We present formal proofs of Theorems \ref{thm:new} and \ref{thm:newpower}
in Section~\ref{sec:formalproof},
after recalling some relevant facts from random matrix theory
and free probability theory in Section~\ref{sec:background}.
In Section~\ref{sec:convolution}, we point out that 
the limiting spectral distributions in Theorem \ref{thm:new}
may be viewed as \emph{stable} distributions
with respect to an appropriate convolution $\oplus$
defined on a certain set of rotation-invariant probability measures on the complex plane,
and we discuss some generalizations of Theorem~\ref{thm:new}.
Section~\ref{sec:limit-theorem} is~devoted to the proof of an enhanced version
of Proposition \ref{prop:simple-limit-theorem}, viz.~Theorem~\ref{thm:limit-theorem}. 
In particular, combining the formal proofs from Section \ref{sec:formalproof}
with that of Theorem~\ref{thm:limit-theorem}, we obtain rigorous proofs
of Theorems \ref{thm:new} and \ref{thm:newpower}; 
see Section \ref{sub:rigor} for details. 
Finally, in Section~\ref{sec:auxiliary-results}, 
we compile a number of auxiliary results from the~literature.

\medskip

\section{Background}
\label{sec:background}

In this section we collect some relevant concepts and results from the literature.

\subsection{Results from Random Matrix Theory.} \ 
\label{sec:RMT}

Given a random matrix $\F_n$ of dimension $n \times n$,
let $s_1(\F_n) \geq \cdots \geq s_n(\F_n)$ denote
the singular values of $\F_n$ (in decreasing order),
and let $\lambda_1(\F_n),\hdots,\lambda_n(\F_n)$ denote
the eigenvalues of $\F_n$ \linebreak[2] (in arbitrary order).
Then the probability measure
$$
\nu_n := \nu(\F_{n}) := \tfrac1n \sum_{j=1}^{n} \delta_{s_j(\F_n)}
$$
is called the (empirical) \emph{singular value distribution} of the matrix $\F_n$,
and if \linebreak there exists a non-random probability measure $\nu_\F$
such that $\nu(\F_{n}) \to \nu_\F$ weakly in probability,
we call $\nu_\F$ the \emph{limiting singular value distribution}
of the matrices $\F_n$.
Similarly, the~probability measure
$$
\mu_n := \mu(\F_{n}) := \tfrac1n \sum_{j=1}^{n} \delta_{\lambda_j(\F_n)}
$$ 
is called the (empirical) \emph{eigenvalue distribution} of $\F_n$,
and if there exists a non-random probability measure $\mu_\F$
such that $\mu(\F_{n}) \to \mu_\F$ weakly in probability,
we call $\mu_\F$ the \emph{limiting eigenvalue distribution} of the~matrices $\F_n$.
Following \textsc{Girko}, we~will study the limiting eigenvalue distribution
of the matrices $\F_n$ by studying the limiting singular value distributions
of the \emph{shifted} matrices $\F_n - \alpha \I_n$, for all $\alpha \in \mathbb{C}$.
See Theorem \ref{thm:main} for a statement suited for our purposes.

\pagebreak[2]

In doing so, we will often consider the \emph{Hermitian} matrices
\begin{align}
\label{eq:hermitization}
\V_n := \left[ \begin{array}{cc} \mathbf{O} & \F_n^{} \\ \F_n^* & \mathbf{O} \end{array} \right] \qquad\text{and}\qquad \W_n := \F_n^{} \F_n^* \,.
\end{align}
Note that the eigenvalues of these matrices are given by
$\pm s_1,\hdots,\pm s_n$ and $s_1^2,\hdots,s_n^2$, respectively.
For this reason, the probability measures $\mu(\V_n)$ and $\mu(\W_n)$
will also be called the \emph{symmetrized} and \emph{squared} \emph{singular value distribution}
of the matrix $\F_n$, respectively.
It is easy to see that knowledge of one of the~distributions $\nu(\F_n),\mu(\V_n),\mu(\W_n)$
(or its convergence) implies knowledge of the~other two (or their convergence).
More precisely, if $\mys$ denotes the operator which associates
with each distribution $\nu$ on $(0,\infty)$
its symmetrization on~$\myreal^*$
and $\myq$ denotes the operator which associates
with each symmetric distribution $\mu$ on~$\myreal^*$
its~induced distribution on $(0,\infty)$ under the mapping $x \mapsto x^2$,
the operators $\mys$~and~$\myq$ are one-to-one,
and we have the relations
\begin{align}
\label{eq:S-and-Q}
\mu(\V_n) = \mys \nu(\F_n)
\quad\text{and}\quad
\mu(\W_n) = \myq \mu(\V_n) \,.
\end{align}

In the special case where $\F_n = \X_n^{(1)}$, $n \in \mynat$,
it is well-known that $\mu(\W_n) \to \gamma$ weakly in probability,
where
\begin{align}
\label{eq:marchenko-pastur}
\gamma(dx) = \frac{1}{2\pi} \sqrt{\frac{4-x}{x}} \, \pmb{1}_{(0,4)}(x) \, \mylebesgue(dx) \,.
\end{align}
This result is also known as the \emph{Marchenko--Pastur law} (with~parameter~1),
and the measure $\gamma$ is also called the \emph{Marchenko--Pastur distribution}
(with~parameter~1). 
Therefore, in the special case where $\F_n = (\X_n^{(1)})^{-1}$, $n \in \mynat$,
we~have $\mu(\W_n) \to \gamma^{-1}$ weakly in probability,
where $\gamma^{-1}$ is the induced measure of $\gamma$ 
under the mapping $x \mapsto x^{-1}$. \pagebreak[2]
We will call this measure the \emph{inverse Marchenko--Pastur distribution}.
The distributions $\gamma$ and $\gamma^{-1}$ will serve 
as building blocks for more complex results.
Also, for $t > 0$, let $\gamma^{+1}_t := \gamma^{+1} := \gamma$,
and let $\gamma^{-1}_t$ denote the induced measure of $\gamma$
under the mapping $x \mapsto (x+t)^{-1} x (x+t)^{-1}$.
(These notions are motivated \linebreak by our regularization procedure in Section~5.) 
Note that $\gamma^{+1}_t = \gamma$ for all $t > 0$, 
while $\gamma^{-1}_t \to \gamma^{-1}$ weakly as $t \to 0$.
Finally, let us note that the $S$-transforms of $\gamma$ and $\gamma^{-1}$
are given by
\begin{align}
\label{eq:S-marcenko-pastur}
S_\gamma(z) = \frac{1}{z+1} \qquad\text{and}\qquad S_{\gamma^{-1}}(z) = -z \,,
\end{align}
respectively, see e.g.\@ Section 8.1.1 in \cite{GKT:2014}.

\medskip

In the next theorem, the first part is a special case of a result which goes back to \textsc{Girko}
(see also \textsc{Bordenave} and \textsc{Chafai} \cite{BC:2012}), while the second part is taken 
from Section~7 in \cite{GKT:2014}. For $\alpha \in \mathbb{C}$, introduce the Hermitian matrix
\begin{align}
\label{eq:matrix-J}
\J_n(\alpha) := \left[ \begin{array}{cc} \mathbf{O} & -\alpha\I_n \\ -\overline\alpha\I_n & \mathbf{O} \end{array} \right] 
\end{align}
and the Bernoulli measure
\begin{align}
\label{eq:measure-B}
B(\alpha) := \tfrac12\delta_{-|\alpha|} + \tfrac12\delta_{+|\alpha|} \,,
\end{align}
and note that $B(\alpha)$ is the eigenvalue distribution of $\J_n(\alpha)$.

\medskip

\begin{theorem}[Convergence of Spectral Distributions]
\label{thm:main}
Let $(\F_n)_{n \in \mynat}$ be a sequence of random matrices,
let $(\V_n)_{n \in \mynat}$ be defined as in \eqref{eq:hermitization},
and suppose the following:
\begin{enumerate}[(a)]
\item
For each $n \in \mynat$, $\F_n$ has size $n \times n$.
\item
There exists a non-random probability measure $\mu_\V$ on $\myreal$ such that for all $\alpha \in \mathbb{C}$, 
$\mu(\V_n + \J_n(\alpha)) \to \mu_\V \boxplus B(\alpha)$
weakly in probability.
\item
The random matrices $\F_n$ satisfy the conditions $(C0)$, $(C1)$ and $(C2)$ from \cite{GKT:2014}.
(These conditions are the same as in Condition C in Section~5.)
\end{enumerate}
Then the empirical eigenvalue distributions of the matrices $\F_n$ converge weakly in~probability 
to a limit $\mu_\F$, where $\mu_\F$ is the unique probability measure on $\mathbb{C}$ 
such~that
\begin{align}
\label{eq:logpotential}
U_\F(\alpha) := - \! \int \log|z-\alpha| \, d\mu_\F(z) = - \! \int \log|x| \, d(\mu_\V \boxplus B(\alpha))(x)
\end{align}
for all $\alpha \in \mathbb{C}$.

\pagebreak[2]

Moreover, the probability measure $\mu_\F$ is rotation-invariant,
and with the notation from \cite{GKT:2014} and under regularity conditions, 
it has the Lebesgue density
\begin{align}
\label{eq:GKT}
f(u,v) = \frac{1}{2\pi |\alpha|^2} \left( u \, \frac{\partial \psi}{\partial u} + v \, \frac{\partial \psi}{\partial v} \right) \,,
\end{align}
where $\psi$ is a continuous function on $\mycmplx^*$ taking values in $[0,1]$
and satisfying the~equation
\begin{align}
\psi(\alpha) (1 - \psi(\alpha)) = - |\alpha|^2 (1-\psi(\alpha))^2 \Big( S_\V(-(1-\psi(\alpha))) \Big)^2 \,.
\end{align}
Here, $S_{\V}$ denotes the $S$-transform of $\mu_{\V}$.
Alternatively, we~may write
\begin{align}
\label{eq:alternative}
f(u,v)=\frac1{\pi|\alpha|^2}\frac{\psi(\alpha)(1-\psi(\alpha))}{1-2\psi(\alpha)(1-\psi(\alpha))\dfrac{S'_\V(-(1-\psi(\alpha)))}{S_\V(-(1-\psi(\alpha)))}} \,.
\end{align}
\end{theorem}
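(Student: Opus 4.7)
The plan is to combine Girko's hermitization / logarithmic-potential approach with the general framework of \cite{GKT:2014}. The starting point is the identity
\[
U_{\mu(\F_n)}(\alpha) \;=\; -\frac{1}{n} \sum_{j=1}^{n} \log |\lambda_j(\F_n) - \alpha|
\;=\; -\int \log|x| \, d\mu(\V_n + \J_n(\alpha))(x),
\]
which rewrites the log-potential of $\mu(\F_n)$ as an integral against the spectral measure of the hermitization $\V_n + \J_n(\alpha)$; this holds because the eigenvalues of $\V_n + \J_n(\alpha)$ are exactly $\pm s_j(\F_n - \alpha \I_n)$, so the right-hand side equals $-\tfrac{1}{n}\log|\det(\F_n - \alpha \I_n)|$.

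By assumption (b), $\mu(\V_n + \J_n(\alpha)) \to \mu_\V \boxplus B(\alpha)$ weakly in probability for each $\alpha \in \mycmplx$. To upgrade this to convergence of log-potentials, I must control the singularities of $\log|x|$ at $0$ and at $\infty$. The tail at infinity is handled by condition (C0), which bounds the operator norm of $\F_n$; the singularity at $0$ is handled by condition (C1), a lower bound on the smallest singular value of $\F_n - \alpha \I_n$, together with condition (C2), a uniform integrability statement for $\log|x|$ near the origin. These conditions yield $U_{\mu(\F_n)}(\alpha) \to U_\F(\alpha) := -\int \log|x| \, d(\mu_\V \boxplus B(\alpha))(x)$ in probability for each $\alpha$, and in $L^1_{\mathrm{loc}}(\mycmplx)$ in probability. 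The Girko unicity principle (see e.g.\ \cite{BC:2012}) then shows that $\mu(\F_n)$ converges weakly in probability to the unique probability measure $\mu_\F$ on $\mycmplx$ whose log-potential is $U_\F$, which is precisely \eqref{eq:logpotential}. Rotation invariance of $\mu_\F$ is immediate: $B(\alpha)$ and hence $U_\F(\alpha)$ depend only on $|\alpha|$.

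For the density formulas \eqref{eq:GKT}--\eqref{eq:alternative} I would follow Section~7 of \cite{GKT:2014}. The free convolution $\mu_\V \boxplus B(\alpha)$ is analyzed via the $S$-transform calculus, parametrizing the Stieltjes transform of the convolution by a function $\psi(\alpha) \in [0,1]$ through a subordination relation; this produces the implicit identity for $\psi$ displayed in the theorem. The density $f$ is then recovered from the Poisson equation $f = -\tfrac{1}{2\pi} \Delta_\alpha U_\F(\alpha)$ (with $\alpha = u + iv$), and differentiating $U_\F$ through the $\psi$-parametrization yields \eqref{eq:GKT}; the alternative form \eqref{eq:alternative} follows by implicit differentiation of the defining equation for $\psi$.

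The main analytic obstacle is the passage from weak convergence of $\mu(\V_n + \J_n(\alpha))$ to convergence of the corresponding log-potentials: the logarithmic singularity at the origin must be ruled out uniformly in $n$ for almost every $\alpha$, which is exactly the content of conditions (C1)--(C2) and constitutes the non-routine analytic input of the theorem. The subsequent Girko unicity step and the GKT computation of the density are then essentially invocations of existing results.
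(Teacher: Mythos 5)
Your outline is correct and coincides with the route the paper itself takes: the paper does not actually prove Theorem~\ref{thm:main} but cites Girko and Bordenave--Chafai for the first part and Section~7 of \cite{GKT:2014} for the density formulas, and your hermitization/log-potential argument --- the identity $-\tfrac1n\sum_j\log|\lambda_j-\alpha| = -\int\log|x|\,d\mu(\V_n+\J_n(\alpha))$, uniform control of $\log|x|$ at $0$ and $\infty$ via $(C0)$--$(C2)$, and the unicity principle for logarithmic potentials --- is precisely that standard proof. The only minor imprecision is that $(C0)$ bounds the empirical $p$-th moment of the singular values (hence the $\log^+$ tail), not the operator norm itself, but this does not affect the argument.
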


\medskip
\medskip

\subsection{Results from Free Probability Theory.} \ 
\label{sec:FPT}

To describe the limiting singular value distributions of the random matrices $\F_n$
in~Proposition \ref{prop:simple-limit-theorem}, we will use various concepts and results from free probability theory. 
See e.g.\@ \cite{DNV:1992,NS:2006} for a thorough introduction to free probability theory,
or Section~5 in \cite{GKT:2014} for a brief introduction tailored to our purposes.
In~particular, we~will use the free additive and multiplicative convolutions $\boxplus$~and~$\boxtimes$,
the asso\-ciated $R$ and $S$ transforms
(also for probability measures with \emph{unbounded} support),
and the asymptotic freeness of random matrices.
Furthermore, we will frequently use the following result:

\begin{proposition}[Asymptotic Freeness]
\label{prop:asymptoticfreeness}
For each $n \in \mynat$, let $\A_n$ and $\B_n$ be~in\-dependent bi-unitary invariant random matrices of size $n \times n$
such that 
$$
\sup_{n \in \mynat} \max \Big\{ \ee \Big( \tfrac1n \trace (\A_n^{} \A_n^*)^k \Big), \ee \Big( \tfrac1n \trace (\B_n^{} \B_n^*)^k \Big) \Big\} < \infty
$$
for all $k \in \mynat$,
and suppose that there exist compactly supported (deterministic) probability measures $\mu_{\A \A^*}$ and $\mu_{\B \B^*}$ on $(0,\infty)$
such that $\mu(\A_n \A_n^*) \to \mu_{\A \A^*}$ and $\mu(\B_n \B_n^*) \to \mu_{\B \B^*}$ weakly in probability.
\begin{enumerate}[(a)]
\item
The families $\{ \A_n,\A_n^* \}$ and $\{ \B_n,\B_n^* \}$ are asymptotically free, \\
and $(\A_n \B_n) (\A_n \B_n)^* \to \mu_{\A \A^*} \boxtimes \mu_{\B \B^*}$ in moments.
\item
For any $k,l \in \mynat$, the matrices $(\A_n^k)^* \A_n^k$ and $\A_n^l (\A_n^l)^*$ are asymptotically free, \\
and for any $k \in \mynat$, $\A_n^{k} (\A_n^{k})^* \to \mu_{\A \A^*}^{\boxtimes k}$ in moments.
\item
The matrices $\V_n(\A_n)$ and $\V_n(\B_n)$ are asymptotically free, \\
and $\V_n(\A_n) + \V_n(\B_n) \to \mu_{\V(\A)} \boxplus \mu_{\V(\B)}$ in moments.
\item
The matrices $\V_n(\A_n)$ and $\J_n(\alpha)$ are asymptotically free, \\
and $\V_n(\A_n) + \J_n(\alpha) \to \mu_{\V(\A)} \boxplus B(\alpha)$ in moments.
\end{enumerate}
Here, $\V(\A_n)$ and $\V(\B_n)$ are defined similarly as in Eq.~\eqref{eq:hermitization},
and $\mu_{\V(\A)}$~and~$\mu_{\V(\B)}$ denote the corresponding limiting distributions.
\end{proposition}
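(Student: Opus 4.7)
The plan is to unify all four parts via the polar decomposition and Voiculescu's asymptotic freeness theorem for Haar unitaries. Since $\A_n$ is bi-unitary invariant, modulo the negligible event where $\A_n$ fails to be invertible we may factor $\A_n = \U_n P_n$ with $P_n := (\A_n^* \A_n)^{1/2}$ and $\U_n$ Haar distributed on the unitary group and independent of $P_n$; analogously $\B_n = \V_n Q_n$, and the pairs $(\U_n,P_n)$, $(\V_n,Q_n)$ are mutually independent. The uniform moment hypotheses translate to boundedness of the $\ast$-moments of $P_n,Q_n$, and the weak convergences $\mu(\A_n\A_n^*)\to\mu_{\A\A^*}$ and $\mu(\B_n\B_n^*)\to\mu_{\B\B^*}$ combined with uniform integrability yield convergence in probability of all moments of $P_n^2,Q_n^2$ to those of $\mu_{\A\A^*},\mu_{\B\B^*}$.

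For (a), (c), (d) the main input is Voiculescu's theorem (see e.g.\ \cite{NS:2006}): a Haar unitary that is independent of a family with convergent joint $\ast$-distribution becomes asymptotically free from that family. Applied to $\{\U_n,\V_n,P_n,Q_n\}$ this gives asymptotic freeness of the $\ast$-subalgebras generated by $\{\U_n,P_n\}$ and by $\{\V_n,Q_n\}$, which contain $\{\A_n,\A_n^*\}$ and $\{\B_n,\B_n^*\}$ respectively, proving (a). The product identification follows from the cyclic trace simplification
\begin{align*}
\tfrac{1}{n}\trace\bigl[(\A_n\B_n)(\A_n\B_n)^*\bigr]^k = \tfrac{1}{n}\trace\bigl[P_n^2\,(\V_n Q_n^2 \V_n^*)\bigr]^k,
\end{align*}
which (by Voiculescu again, since $\V_n$ is Haar independent of $P_n,Q_n$) converges to the $k$-th moment of a product of two freely independent positive elements with distributions $\mu_{\A\A^*}$ and $\mu_{\B\B^*}$, i.e.\ of $\mu_{\A\A^*}\boxtimes\mu_{\B\B^*}$. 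Parts (c) and (d) are handled at the $2n\times 2n$ level: the Hermitizations $\V_n(\A_n),\V_n(\B_n)$ are invariant under conjugation by block-diagonal Haar unitaries, so Voiculescu's theorem directly yields their asymptotic freeness in (c); in (d) the matrix $\J_n(\alpha)$ is deterministic with eigenvalue distribution $B(\alpha)$, and its asymptotic freeness from $\V_n(\A_n)$ follows from the conjugation invariance of the latter, with the sum converging in moments to $\mu_{\V(\A)}\boxplus B(\alpha)$ by the defining property of $\boxplus$.

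Part (b) is the most delicate, because only a single Haar unitary $\U_n$ appears in $\A_n^k$, so Voiculescu's theorem does not directly apply at the level of independent blocks. Writing $\A_n = \U_n P_n$ and expanding, $\tfrac{1}{n}\trace[\A_n^k(\A_n^k)^*]^m$ becomes the normalized trace of an explicit word in $\U_n,\U_n^*,P_n$, which by Voiculescu's theorem converges to the corresponding word trace in a tracial $W^*$-probability space containing a Haar unitary $u$ free from a self-adjoint $p$ with distribution equal to the pushforward of $\mu_{\A\A^*}$ under $x\mapsto\sqrt{x}$. The element $a := up$ is then $R$-diagonal in the sense of Haagerup--Larsen, and the multiplicative property of $R$-diagonal elements (namely, that powers $a^k$ remain $R$-diagonal with $|a^k|^2$ distributed as $\mu_{|a|^2}^{\boxtimes k}$, cf.\ \cite{NS:2006}) identifies $\mu_{\A_n^k(\A_n^k)^*}\to\mu_{\A\A^*}^{\boxtimes k}$. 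The asymptotic freeness of $(\A_n^k)^*\A_n^k$ and $\A_n^l(\A_n^l)^*$ then reflects the fact that these words correspond, in the free model, to the right and left squared polar moduli of $a^k,a^l$, which are free as a consequence of the $R$-diagonality of $a$. The principal obstacle throughout is exactly this passage from independent matrix ingredients (handled immediately by Voiculescu's theorem) to several algebraic expressions in a single bi-unitary invariant matrix, where the theory of $R$-diagonal elements is the essential tool.
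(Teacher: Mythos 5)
Your handling of parts (a) and (b) is sound and follows essentially the same route as the sources the paper relies on for these parts (Section 4.3 of \cite{HP:2000}): decompose the bi-unitary invariant matrix into a Haar unitary and an independent positive (or diagonal singular-value) part, invoke Voiculescu's asymptotic freeness theorem, use cyclicity of the trace for the product identification, and use the $R$-diagonal calculus (powers of $R$-diagonal elements are $R$-diagonal with $\mu_{|a^k|^2}=\mu_{|a|^2}^{\boxtimes k}$) for part (b). The only loose end there is minor: to get asymptotic freeness of the two generated $*$-algebras you need the family $\{P_n,Q_n\}$ to have a limiting joint distribution, which holds because $P_n$ and $Q_n$ are themselves independent and unitarily conjugation-invariant (or, cleaner, use the full SVD with diagonal singular-value matrices and four independent Haar unitaries).

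Parts (c) and (d), however, contain a genuine gap. You assert that since $\V_n(\A_n)$ and $\V_n(\B_n)$ are invariant under conjugation by block-diagonal unitaries $\mathrm{diag}(\U,\W)$, ``Voiculescu's theorem directly yields their asymptotic freeness.'' Voiculescu's conjugation theorem requires Haar invariance under the \emph{full} group $U(2n)$; the Hermitizations are invariant only under the proper subgroup $U(n)\times U(n)$, which preserves the off-diagonal block structure, so the two matrices are not put in ``general position'' in $M_{2n}$ and no off-the-shelf theorem applies. The objection is even sharper for (d): $\J_n(\alpha)$ is a fixed matrix that is not rotated at all, and since $\V_n(\A_n)+\J_n(\alpha)=\V_n(\A_n-\alpha\I_n)$, the claim is exactly the nontrivial statement that a scalar shift of $\A_n$ acts on the limiting symmetrized singular value law by free convolution with $B(\alpha)$ --- this is why the paper defers (d) to Section~5 of \cite{GKT:2014} and notes that (c) follows by similar arguments (cf.\ also \cite{TT:2014}). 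The correct route is to combine the $*$-freeness from part (a) with the $R$-diagonality of the limiting elements: for $*$-free $R$-diagonal $a,b$ one has the Haagerup--Larsen identities $\widetilde\mu_{|a+b|}=\widetilde\mu_{|a|}\boxplus\widetilde\mu_{|b|}$ and $\widetilde\mu_{|a-\alpha|}=\widetilde\mu_{|a|}\boxplus B(\alpha)$ (see \cite{HL:2000,HS:2007}), equivalently one checks freeness of the Hermitizations by a moment computation exploiting that all unbalanced $*$-moments of $a$ and $b$ vanish. You already have the $R$-diagonal machinery in play for part (b); it is needed here as well, and the subgroup-conjugation shortcut should be removed.
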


Here, parts (a) and (b) follow from the results in Section 4.3 in \cite{HP:2000},
part (d) is proved in Section~5 in \cite{GKT:2014}, and part (c) follows from similar arguments.
Also, let~us~mention that part (c) is already implicit in \cite{TT:2014}.

\begin{remark}
Let us mention that Proposition \ref{prop:asymptoticfreeness} may be used
to establish the~weak convergence of the \emph{mean} singular value distributions
of the matrices $\A_n \B_n$, $\A_n^k$ and $\A_n + \B_n$.
However, in most of the situations in which we will use Proposition \ref{prop:asymptoticfreeness} later,
this already implies the weak convergence in probability of the singular value distributions
of these matrices, see e.g.\@ Section~A.1 in~\cite{GKT:2014}.
\end{remark}

It is worth pointing out that there is another description of the density $f(u,v)$
of the limiting eigenvalue distribution in Theorem \ref{thm:main}. This description is 
due to \textsc{Haagerup} and \textsc{Larsen} \cite{HL:2000} 
for probability measures with bounded support 
and to \textsc{Haagerup} and \textsc{Schultz} \cite{HS:2007} \pagebreak[2]
for probability measures with unbounded support. 
Actually, in these papers, the density $f$ is shown to~describe the \emph{Brown~measure} 
of a so-called \emph{$R$-diagonal element} in a $W^*$-probability space. Roughly speaking, 
an $R$-diagonal element is a non-commutative random variable of the form $uh$, 
\linebreak where $u$ is a~Haar unitary and $h$ is positive element $*$-free from $u$.

For our purposes, this description of the density $f$ may be summarized as~follows:
In the situation of Theorem \ref{thm:main},
suppose that the matrices $\W_n := \F_n \F_n^*$ have a~limiting distribution $\mu_{\W}$
on $(0,\infty)$ which is not a Dirac measure and for which
\begin{align}
\label{eq:log-integrability-1}
\int \log^+ t \ d\mu_{\W}(t) < \infty \,.
\end{align}
Let $S_{\W}$ denote the $S$-transform of $\mu_{\W}$, and set 
$$
F(t) := \frac{1}{\sqrt{S_\W(t-1)}} \,.
$$
Then $F$ is a smooth bijection from the interval $(0,1)$ to the interval 
$$
(a,b) := \left( \Big( \int x^{-2} \, d\nu_\F(x) \Big)^{-1/2}, \Big( \int x^{2} \, d\nu_\F(x) \Big)^{1/2} \right)
$$
(where $1/\infty := 0$ and $1/0 := \infty$),
and the limiting eigenvalue distribution $\mu_\F$ of the~matrices $\F_n$
has a rotation-invariant density $f(r)$ given by 
\begin{align}
\label{eq:haagerup}
f(r) = \frac{1}{2\pi r \, F'(F^{-1}(r))} \pmb{1}_{(a,b)}(r) \,.
\end{align}
Clearly, the connection to the above Theorem \ref{thm:main} arises from the fact 
that $\psi = F^{-1}$ on the interval $(a,b)$. 

Furthermore, Equation \eqref{eq:haagerup} shows that $F^{-1}(r) = \int_0^r 2\pi s \, f(s) \, ds$,
which implies that $\mu_\W$ is uniquely determined by $\mu_\F$.
Thus, we get a one-to-one correspondence between the set of all distributions 
$\mu_{\W}$ on $(0,\infty)$ satisfying \eqref{eq:log-integrability-1}
and a certain set $H$ of rotation-invariant distributions on $\mycmplx$.
Composing this correspondence with the~operator $\mathcal{Q}$
introduced above \eqref{eq:S-and-Q},
we obtain a one-to-one correspondence $\myh$ between 
the set of all symmetric distributions $\mu_{\V}$ on $\myreal^*$
such that 
\begin{align}
\label{eq:log-integrability-2}
\int \log^+ |t| \ d\mu_{\V}(t) < \infty
\end{align}
and the above-mentioned set $H$ of rotation-invariant distributions on $\mycmplx$.
It is easy to check that for any symmetric probability measure $\mu$ on $\myreal^*$
satisfying \eqref{eq:log-integrability-2}, we~have
\begin{align}
\label{eq:scaling}
\myh(\myd_c \mu) = \myd_c \myh(\mu)
\end{align}
for all $c > 0$, where $\myd_c$ is the scaling operator 
which maps a probability measure on $\myreal$ or $\mycmplx$
to its induced measure under the mapping $x \mapsto cx$.

\pagebreak[2]

For random matrices $\F_n$ and $\V_n$ as in Theorem \ref{thm:main},
the correspondence $\myh$ describes the relationship
between the limiting spectral distributions $\mu_\V$~and~$\mu_\F$,
i.e.\@ we have the relation
\begin{align}
\label{eq:H}
\mu_\F = \mathcal{H}(\mu_\V) \,.
\end{align}

\medskip

\subsection{Results on $\boxplus$-Stable Distributions.} \ 
\label{sec:boxplus-stable}

Let us collect some results on $\boxplus$-stable distributions which will be needed later.
A distribution $\mu$ on $\myreal$ is called \emph{(strictly) $\boxplus$-stable}
if there exists a~constant $\alpha > 0$ such that $\mu^{\boxplus m} = \myd_{m^{1/\alpha}} \mu$ 
for all $m \in \mynat$. 
Here, $\myd_c$ is defined as in Equation \eqref{eq:scaling}.
We will often call the constant $\alpha$
the \emph{stability index} of the~$\boxplus$-stable distribution $\mu$.

The (strictly) $\boxplus$-stable distributions have been investigated
in \cite{BV:1993}, \cite{BP:1999} and \cite{AP:2009}.
First of all, let us recall that for any $\boxplus$-stable distribution, 
$\alpha \in {]}0,2{]}$. We~will need the following result, 
which is contained in \cite[Appendix A]{BP:1999} and~\cite{AP:2009}:

\begin{proposition}
\label{prop:symmetric-free-stable}
Fix $\alpha \in {]}0,2{]}$.
For a symmetric probability measure $\mu$ on $\myreal^*$,
the following are equivalent:
\begin{enumerate}[(i)]
\item
$\mu$ is (strictly) $\boxplus$-stable with stability index $\alpha$.
\item
$R_\mu(z) = bz^{\alpha - 1}$,
where $b \in \mycmplx^*$ with $\arg b = -\pi + \alpha \pi/2$.
\item
$S_\mu(z) = z^{(1/\alpha)-1} / b^{1/\alpha}$,
where $b \in \mycmplx^*$ with $\arg b = -\pi + \alpha \pi/2$.
\end{enumerate}
Moreover, in this case, the constants $b$ in parts (ii) and (iii) are the~same.
\end{proposition}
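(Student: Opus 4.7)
The plan is to split the three-way equivalence into two algebraic steps and one analytic step. I would prove (i) $\Leftrightarrow$ (ii) via the additive and scaling transformation laws of the $R$-transform, (ii) $\Leftrightarrow$ (iii) via the algebraic bridge $R_\mu(z)\,S_\mu(zR_\mu(z)) = 1$, and finally pin down the admissible range $\arg b = -\pi + \alpha\pi/2$ by invoking the classification of $\boxplus$-stable laws in \cite{BV:1993,BP:1999,AP:2009}.

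For (i) $\Leftrightarrow$ (ii), I would combine the two standard identities $R_{\mu^{\boxplus m}}(z) = m R_\mu(z)$ and $R_{\myd_c \mu}(z) = c\, R_\mu(cz)$, where the latter follows from the scaling relation $G_{\myd_c \mu}(z) = c^{-1} G_\mu(c^{-1} z)$ of the Cauchy transform. Under these, the stability identity $\mu^{\boxplus m} = \myd_{m^{1/\alpha}}\mu$ is equivalent to the functional equation
\begin{equation*}
R_\mu(m^{1/\alpha} z) = m^{\,1 - 1/\alpha} \, R_\mu(z) \qquad \text{for all } m \in \mynat \,.
\end{equation*}
Together with analyticity of $R_\mu$ in an appropriate sector and the unboundedness of $\{ m^{1/\alpha} \}_{m \in \mynat}$, this forces $R_\mu$ to be homogeneous of degree $\alpha - 1$, i.e.\ $R_\mu(z) = b\, z^{\alpha-1}$ for some $b \in \mycmplx^*$. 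The converse direction is a direct check using the same two identities.

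For (ii) $\Leftrightarrow$ (iii), I would substitute $R_\mu(z) = b z^{\alpha-1}$ into the algebraic bridge, set $w := z R_\mu(z) = b z^\alpha$ and invert to $z = (w/b)^{1/\alpha}$ to obtain
\begin{equation*}
S_\mu(w) \,=\, \frac{1}{b\, z^{\alpha-1}} \,=\, \frac{1}{b \cdot (w/b)^{(\alpha-1)/\alpha}} \,=\, \frac{w^{1/\alpha - 1}}{b^{1/\alpha}} \,,
\end{equation*}
which is exactly (iii) and, notably, with the \emph{same} constant $b$; the reverse substitution is analogous.

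The main obstacle is the last analytic step: the algebraic manipulations above do not distinguish those $b \in \mycmplx^*$ for which $b z^{\alpha-1}$ actually is the $R$-transform of a genuine symmetric probability measure from those for which it is merely a formal power law or corresponds to a signed measure. To select $\arg b = -\pi + \alpha\pi/2$ one has to enforce (a) symmetry of $\mu$, which by vanishing of odd moments translates into $R_\mu(-z) = -R_\mu(z)$ and thereby fixes the branch of $z^{\alpha-1}$, and (b) the requirement that $K_\mu(z) := 1/z + R_\mu(z)$ be the compositional inverse of a Cauchy transform of a probability measure, i.e.\ $K_\mu$ must map an appropriate Stolz angle into the upper half-plane with the correct asymptotics near the origin. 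Carrying this analytic characterization out directly is precisely the technical content of \cite[Appendix A]{BP:1999} and \cite{AP:2009}, which may therefore be cited to close the argument.
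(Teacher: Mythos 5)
The paper does not actually prove this proposition: it is quoted verbatim as a known result, ``contained in \cite[Appendix A]{BP:1999} and \cite{AP:2009}'', so your sketch is doing strictly more work than the source. Your decomposition is the standard one underlying those references, and the algebra is right: the scaling law $R_{\myd_c\mu}(z)=cR_\mu(cz)$ together with $R_{\mu^{\boxplus m}}=mR_\mu$ does turn stability into the functional equation you write, and substituting $R_\mu(z)=bz^{\alpha-1}$ into $R_\mu(z)\,S_\mu(zR_\mu(z))=1$ does yield $S_\mu(w)=w^{1/\alpha-1}/b^{1/\alpha}$ with the same $b$. Deferring the characterization of the admissible $\arg b$ to \cite{BP:1999,AP:2009} is legitimate and is exactly the move the paper itself makes.

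Two points deserve more care than you give them. First, in the step (ii) $\Leftrightarrow$ (iii): since $\mu$ is a \emph{symmetric} measure on $\myreal^*$, its mean vanishes, so the classical $S$-transform is undefined; one must use the extension to symmetric measures of Rao--Speicher and Arizmendi--Perez-Abreu, in which $S_\mu$ is intrinsically two-valued and the identity $R_\mu(z)S_\mu(zR_\mu(z))=1$ only holds after a branch of the fractional powers is fixed. This is not cosmetic -- the assertion that the constants $b$ in (ii) and (iii) coincide is only well-posed once the branches are pinned down, which is why the paper devotes the paragraph immediately following the proposition to specifying arguments in $]{-}\pi,{+}\pi]$ for powers of $b$ and in $(-2\pi,0)$ for powers of $z$. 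Your proof should state which branches it uses. Second, the claim that the functional equation $R_\mu(m^{1/\alpha}z)=m^{1-1/\alpha}R_\mu(z)$ for all $m\in\mynat$ forces exact homogeneity requires an argument: $R_\mu$ is a priori only analytic on a Stolz angle (the support may be unbounded), and one must use that the multiplicative group generated by $\{m^{1/\alpha}\}$ is dense in $(0,\infty)$ together with the identity theorem on that domain. You gesture at this but should either write it out or attribute it explicitly to \cite{BV:1993}.
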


Here, for the $S$-transform $S_\mu(z)$, we make the convention that we take
arguments in $]{-}\pi,{+}\pi]$ to define powers of~$b$
and arguments in $(-2\pi,0)$ to define powers of~$z$.
Thus, we have $S_\mu(z) \in (0,\infty)\i$ when $z \in (-1,0)$,
in line with the convention in~\cite{GKT:2014}.

Henceforward, we write $\sigma_s(\alpha)$ for the (unique) symmetric $\boxplus$-stable distribution
with parameters $\alpha \in {]}0,2{]}$ and $b := e^{(-\pi + \alpha\pi/2)\i}$.
Note that in the special cases $\alpha = 2$ and $\alpha = 1$,
we obtain the standard semi-circle and Cauchy distribution, respectively.
Furthermore, let us recall from \cite[Appendix~A]{BP:1999} that
the distribution $\sigma_s(\alpha)$ has a continuous density $f_\alpha$ 
such that $f_\alpha(x) = \myo(|x|^{-\alpha-1})$ as $|x| \to \infty$.
Thus, in particular, the distribution $\sigma_s(\alpha)$ satisfies
Condition \eqref{eq:log-integrability-2}.

\medskip

\section{Formal Proof of Theorems \ref{thm:new} and \ref{thm:newpower}}
\label{sec:formalproof}

In this section we give a formal proof of Theorem \ref{thm:new} in the Gaussian case 
which~conveys the main idea without being cluttered by technical details.
Indeed, by Proposition \ref{prop:simple-limit-theorem}, once Theorem \ref{thm:new} is proved 
in the Gaussian case, it~follows that the result continues to hold in the general case.

The proof of Theorem \ref{thm:new} presented here is formal 
in that we use the concept of \emph{asymptotic freeness} for random matrices 
whose `moments' (in the sense of free probability theory) do not exist.
Moreover, we use Theorem \ref{thm:main} purely formally without checking the assumptions.
However, the argument can easily be converted into a~rigorous proof 
by using the regularization procedure and the results from Section~5;
see Section~\ref{sub:rigor} for more comments.
Similar remarks pertain to the~proof of Theorem \ref{thm:newpower}.

\pagebreak[2]

\begin{proof}[Formal Proof of Theorem \ref{thm:new}]
Recall that we concentrate on the Gaussian case,
\ie we have $\F_n^{(0)} = \Y_n^{(0)} (\Y_n^{(1)})^{-1} \cdots (\Y_n^{(l)})^{-1}$,
where $\Y_n^{(0)},\Y_n^{(1)},\hdots,\Y_n^{(l)}$ are inde\-pendent random matrices of dimension $n \times n$ 
with independent real or complex Gaussian entries with mean $0$ and variance $1/n$.


First consider the case $m = 1$, so that $\F_n = \F_n^{(0)}$.
By the results from Section~2.1, we know that the limiting eigenvalue distributions of the matrices 
$\Y_n^{(0)} (\Y_n^{(0)})^*$ and $(\Y_n^{(r)})^{-1} ((\Y_n^{(r)})^{-1})^*$ ($r \ne 0$) 
are given by $\gamma$ and $\gamma^{-1}$, respectively, 
with $S$-trans\-forms given by \eqref{eq:S-marcenko-pastur}.
Since $\Y_n^{(0)},(\Y_n^{(1)})^{-1},\dots,(\Y_n^{(l)})^{-1}$ 
are independent bi-unitary invariant matrices, 
it follows by ``asymptotic freeness'' (see Proposition \ref{prop:asymptoticfreeness}\,(a))
\linebreak that the limiting eigenvalue distribution of the matrices $\W_n$ 
from \eqref{eq:hermitization} is given by~the~$S$-transform
$$
S_\W(z) = \frac{(-z)^l}{z+1} \,.
$$
Thus, the limiting eigenvalue distribution of the matrices $\V_n$
from \eqref{eq:hermitization} is given by~the~$S$-transform
$$
S_\V(z) = \sqrt{\frac{z+1}{z} \frac{(-z)^l}{z+1}} = \i^l z^{(l-1)/2} \,.
$$
In view of Proposition \ref{prop:symmetric-free-stable},
the corresponding distribution is $\sigma_s(\frac{2}{l+1})$,
the symmetric $\boxplus$-stable distribution of parameter $\frac{2}{l+1}$.
Hence, again by ``asymptotic freeness'' (see~Proposition \ref{prop:asymptoticfreeness}\,(d)),
we find that for any $\alpha \in \mathbb{C}$, the limiting eigenvalue distribution of the matrices $\V_n + \J_n(\alpha)$
is given by $\sigma_s(\frac{2}{l+1}) \boxplus B(\alpha)$.
It therefore follows from Theorem \ref{thm:main} 
and Equation \eqref{eq:H} that the limiting eigenvalue distribution 
of the matrices $\F_n$ is given by $\myh(\sigma_s(\frac{2}{l+1}))$.


Now consider the case $m > 1$. Here we use that
if $\F_n^{(1)},\hdots,\F_n^{(m)}$ are independent bi-unitary invariant random matrices, 
their Hermitizations $\V_n^{(1)},\hdots,\V_n^{(m)}$ (defined as in \eqref{eq:hermitization}) 
are ``asymptotically free'' (see Proposition \ref{prop:asymptoticfreeness}\,(c)).
Thus, the~matrices
$$
\widetilde\F_n := m^{-(l+1)/2} (\F_n^{(1)} + \hdots + \F_n^{(m)})
$$
have the Hermitizations
$$
\widetilde\V_n := m^{-(l+1)/2} (\V_n^{(1)} + \hdots + \V_n^{(m)}) \,,
$$
with limiting eigenvalue distributions
$$
\mathcal{D}_{m^{-(l+1)/2}} (\sigma_s(\tfrac{2}{l+1}) \boxplus \hdots \boxplus \sigma_s(\tfrac{2}{l+1}))
=
\sigma_s(\tfrac{2}{l+1}) \,.
$$
Here, $\myd_c$ is defined as in Equation \eqref{eq:scaling},
and the last step follows from the fact that $\sigma_s(\tfrac{2}{l+1})$  
is $\boxplus$-stable with stability index $\tfrac{2}{l+1}$.
Hence, again by ``asymptotic freeness'' (see~Proposition\@ \ref{prop:asymptoticfreeness}\,(d)),
we find that for any $\alpha \in \mathbb{C}$, the limiting eigenvalue distribution of the matrices 
$\widetilde\V_n + \J_n(\alpha)$ is given by $\sigma_s(\frac{2}{l+1}) \boxplus B(\alpha)$.
It therefore follows from Theorem \ref{thm:main} and Equation \eqref{eq:H}
that the limiting eigenvalue distribution of the~matrices $\widetilde\F_n$ is also given by $\myh(\sigma_s(\frac{2}{l+1}))$.
\end{proof}

\pagebreak[2]

\begin{proof}[Formal Proof of Theorem \ref{thm:newpower}]
Here the formal proof is almost identical to that of Theorem \ref{thm:new}.
The only difference is that in the first part of the proof (i.e.\@ when $m=1$),
we~addition\-ally use Proposition \eqref{prop:asymptoticfreeness}\,(b)
to see that the $S$-transform of the limiting eigenvalue distribution 
of the matrices $(\Y_n^{(r)})^{-l} ((\Y_n^{(r)})^{-l})^*$ is given by 
$S(z) = (-z)^{-l}$.
\end{proof}

\begin{remark}
In~principle, the limiting density $f(r)$ 
in Theorems \ref{thm:new} and \ref{thm:newpower}
can be found using Equation \eqref{eq:GKT}. 
In our situation, it is easy to check that the equation for $\psi(\alpha)$ reduces to
$$
\psi(\alpha) (1-\psi(\alpha)) = |\alpha|^2 (1-\psi(\alpha))^{l+1} \,.
$$
Thus, using that $\psi(\alpha)$ is continuous with values in $[0,1]$ and $\psi(\alpha) \ne 1$
for $\alpha \approx 0$ (see Sections 6 and 7 in \cite{GKT:2014}), we obtain,
for $l=0,1,2,3,$
$$
\psi_0(r) = 1 \wedge r^2 \,,\quad
\psi_1(r) = \frac{r^2}{1+r^2} \,,\quad
\psi_2(r) = 1 - \frac{2}{\sqrt{1+4r^2} + 1} \,,
$$
$$
\psi_3(r) = 1 - \frac{3}{(1 + v^2(r) + w^2(r))^2} \,,
$$
and therefore
$$
f_0(r) = \tfrac{1}{\pi} \, \pmb{1}_{(0,1)}(r) \,,\quad
f_1(r) = \frac{1}{\pi (1+r^2)^2} \,,\quad
f_2(r) = \frac{2}{\pi \sqrt{1+4r^2} (1 + 2r^2 + \sqrt{1+4r^2})} \,,
$$
$$
f_3(r) = \frac{27 (v(r) + w(r))}{\pi \sqrt{4 + 27r^2} (1 + v^2(r) + w^2(r))^3} \,,
$$
where we have set
$$
v(r) := \Big( \tfrac12 \sqrt{4 + 27r^2} + \tfrac12 \sqrt{27}r \Big)^{1/3}
\quad\text{and}\quad
w(r) := \Big( \tfrac12 \sqrt{4 + 27r^2} - \tfrac12 \sqrt{27}r \Big)^{1/3}
$$
for abbreviation.

\medskip

\begin{center}
\includegraphics[width=12cm]{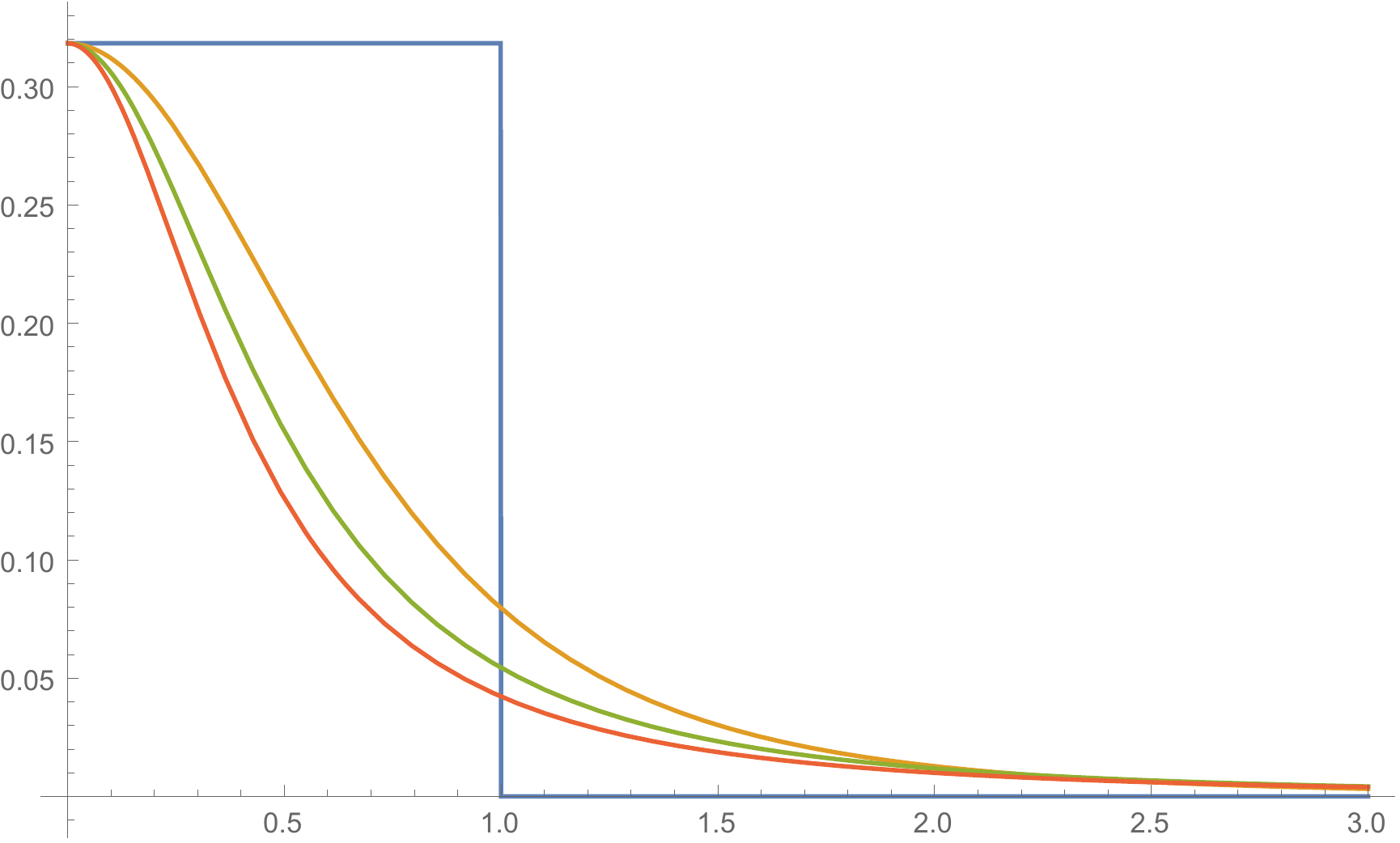}

The limiting eigenvalue densities $f_l(r)$ (viewed along a line through the~origin)
for $l=0$ (blue), $l=1$ (yellow), $l=2$ (green) and $l=3$ (red).
\end{center}

\medskip

Also, let us mention that the paper \cite{ARRS:2013} provides a stochastic representation 
of the~limiting eigenvalue modulus distribution.
\qed
\end{remark}

\pagebreak[2]

\begin{remark}
It seems natural to ask whether there exist further examples of random matrices $\F_n^{(0)}$
such that for any $m \in \mynat$, 
$\F_n^{(0)}$ and $\F_n^{(1)} + \dots + \F_n^{(m)}$ have the same limiting eigenvalue distributions
after appropriate rescaling. However, it~turns out that within the class of products 
of independent Girko-Ginibre matrices and their inverses,
there~exist no further examples beyond those mentioned in Theorem \ref{thm:new},
apart from possible permutations of the exponents $\pm 1$.
Indeed, suppose that $\F_n^{(0)}$ is a~product 
of $p$ factors $\Y_n^{(r)}$ and $q$ factors $(\Y_n^{(r)})^{-1}$
(all of them independent, and in arbitrary order), 
and let $\W_n$ and $\V_n$ be defined as in \eqref{eq:hermitization}.
Then, arguing as in the formal proof of Theorem \ref{thm:new}, we find that 
the corresponding $S$-transforms $S_{\W}$ and $S_{\V}$ are given by
$$
S_\W(z) = \frac{(-z)^{q}}{(1+z)^{p}}
\quad\text{and}\quad
S_\V(z) = \frac{\i^q \, z^{(q-1)/2}}{(1+z)^{(p-1)/2}} \,,
$$
respectively, and by Proposition \ref{prop:symmetric-free-stable},
the latter is the $S$-transform of a symmetric $\boxplus$-stable distribution
if and only if $p = 1$ and $q \in \mynat_0$. Now use the observation that
if $\mu^{\boxplus m}$ is not a rescaled version of $\mu$,
then $\myh(\mu^{\boxplus m})$ is not a rescaled version of $\myh(\mu)$.
\qed
\end{remark}

\smallskip

\section{Free Additive Convolution on $\mycmplx$}
\label{sec:convolution}

Roughly speaking, if $\mu_1$ and $\mu_2$ are two prob\-ability~measures on the real line
and $\A_n$~and~$\B_n$ are Hermitian random matrices ``in general position''
and with limiting spectral distributions $\mu_1$ and $\mu_2$,
then the limiting spectral distribution of the sum $\A_n + \B_n$
is given by the free additive convolution $\mu_1 \boxplus \mu_2$.
It is natural to ask the analogous question for \emph{non-Hermitian} random matrices:
If $\mu_1$ and $\mu_2$ are two probability measures on the complex plane
and $\A_n$~and~$\B_n$ are non-Hermitian random matrices ``in general position''
and with limiting spectral distributions $\mu_1$~and~$\mu_2$,
does there exist a convolution $\mu_1 \oplus \mu_2$
which describes the limiting spectral distribution of the sum $\A_n + \B_n$\,?

In the sequel, we will always assume that $\A_n$ and $\B_n$ are bi-unitary invariant.
Then, in view of the results from Section~2,
it seems reasonable to expect that the limiting spectral distributions $\mu_1$ and $\mu_2$
(if existent) belong to the class $H$ introduced above Equation \eqref{eq:H}. 
It therefore seems natural to restrict the definition of the convolution $\oplus$ 
to probability measures in this class.

Hence, suppose that $\mu_1$ and $\mu_2$ are two probability measures of class $H$
and that $\A_n$ and $\B_n$ are independent bi-unitary invariant random matrices 
with limiting spectral distributions $\mu_1$ and $\mu_2$, respectively.
Also, suppose that these matrices satisfy the assumptions of Theorem \ref{thm:main}.
Then, if $\widetilde\nu_1$ and $\widetilde\nu_2$ are the limiting symmetrized 
singular value distributions of $\A_n$ and $\B_n$, we have 
$\mu_1 = \myh(\widetilde\nu_1)$ and $\mu_2 = \myh(\widetilde\nu_2)$
by Equation \eqref{eq:H}.
Furthermore, suppose that the matrix sums \linebreak $\A_n + \B_n$ have the limiting symmetrized 
singular value distribution $\widetilde\nu_1 \boxplus \widetilde\nu_2$ \linebreak
(which seems very natural in view of Proposition \ref{prop:asymptoticfreeness})
and that they also satisfy the~assumptions of Theorem \ref{thm:main}.
Then, again by Equation \eqref{eq:H}, the associated limiting eigenvalue distribution
is given by $\myh(\widetilde\nu_1 \boxplus \widetilde\nu_2)$.
This leads to the following definition:

\begin{definition}
\label{def:oplus}
Given two probability measures $\mu_1$ and $\mu_2$ of class $H$, \linebreak
set $\mu_1 \oplus \mu_2 := \myh(\myh^{-1}(\mu_1) \boxplus \myh^{-1}(\mu_2))$.
\end{definition}

\pagebreak[1]

Note that, by definition, we have 
\begin{align}
\label{eq:addition}
\myh(\widetilde\nu_1 \boxplus \widetilde\nu_2) = \myh(\widetilde\nu_1) \oplus \myh(\widetilde\nu_2)
\end{align}
for any symmetric probability measures $\widetilde\nu_1$ and $\widetilde\nu_2$ on $\myreal^*$
satisfying \eqref{eq:log-integrability-2}.

\begin{remark*}
It seems a bit unsatisfactory that the above motivation relies (inter alia)
on the assumption that the matrices $\A_n$, $\B_n$ and $\A_n + \B_n$
satisfy the Conditions $(C0)$, $(C1)$ and $(C2)$ from Theorem \ref{thm:main}.
An alternative might be to work~with matrices of the form $\U_n \T_n \V_n^*$,
where $\U_n$~and~$\V_n$ are independent unitary matrices of size $n \times n$
and $\T_n$ are deterministic diagonal matrices of dimension $n \times n$
with positive elements on the main diagonal.
The \emph{single ring theorem} \cite{GKZ:2011,GZ:2012,RV:2012} provides sufficient conditions
for the~convergence of the empirical spectral distributions of these matrices,
but it~is also subject to certain technical conditions.
Also, it is worth noting that the sums of independent unitary matrices have recently been investigated
by \textsc{Basak} and \textsc{Dembo} \cite{BD:2013}.
\end{remark*}

\begin{remark*}
The convolution $\oplus$ may also be interpreted in terms of free probability:
Given $\mu_1$ and $\mu_2$ in $H$, pick $R$-diagonal elements $x_1$ and $x_2$
(in some $W^*$-probability space) such that 
the Brown measure of $x_1$ is $\mu_1$,
the Brown measure of $x_2$ is $\mu_2$,
and $x_1$~and~$x_2$ are $*$-free.
Then $\mu_1 \oplus \mu_2$ is the Brown measure of $x_1 + x_2$,
as follows from the results in \cite{HL:2000,HS:2007}.
\end{remark*}

\pagebreak[2]

It seems natural to introduce the concept of a (strictly) $\oplus$-stable distribution.
Recall that $\myd_c$ denotes the scaling operator on the class of probability measures.

\begin{definition}
A probability measure $\mu$ of class $H$ is called \emph{$\oplus$-stable} 
if there~exists a constant $\alpha > 0$ such that
$
\mu^{\oplus m} = \mathcal{D}_{m^{1/\alpha}} \mu
$
for all $m \in \mynat$.
\end{definition}

Similarly as above, we will call the constant $\alpha$
the \emph{stability index} of the $\oplus$-stable distribution $\mu$.
Using Equations \eqref{eq:scaling} and \eqref{eq:addition}, it is easy to see that 
$\widetilde\nu$ is $\boxplus$-stable if and only if $\myh(\widetilde\nu)$ is $\oplus$-stable.
Therefore, the $\oplus$-stable distributions in $H$
are in one-to-one correspondence with the symmetric $\boxplus$-stable distributions on $\myreal^*$.

\begin{remark}
Using the $S$-transforms of the symmetric $\boxplus$-stable distributions \linebreak
(see Proposition~\ref{prop:symmetric-free-stable}),
the densities of the $\oplus$-stable distributions may be described
a bit more closely by means of either \eqref{eq:alternative} or \eqref{eq:haagerup}.
For instance, for $\alpha \in (0,2)$,
$\psi(r)$ is given by the unique solution in the interval $(0,1)$ to the equation
$$
\frac{\psi(r)}{(1-\psi(r))^{(2/\alpha)-1}} = r^2 \,,
$$
and $f(r)$ is given by 
$$
f(r) = \frac{\psi(r) (1 - \psi(r))}{\pi r^2 (1 + (\tfrac2\alpha - 2) \psi(r))} \,.
$$
\end{remark}

\pagebreak[2]

Let us now turn to the question whether the $\oplus$-stable distributions arise
as the~limiting eigenvalue distributions of some random matrix models.
As we have already seen in Section~\ref{sec:formalproof}, 
by using products of independent Girko--Ginibre matrices and their inverses,
we only get random matrix models for $\alpha = \tfrac{2}{l+1}$, with $l \in \mynat_0$.
However, for general $\alpha \in (0,2)$, we can still take a product consisting of 
a Ginibre matrix, a diagonal matrix and a unitary matrix:

\begin{proposition}
\label{prop:matrixmodel}
Let $\alpha \in (0,2)$, let $\widetilde\alpha := 2\alpha/(2+\alpha) \in (0,1)$,
and let $\sigma_p(\widetilde\alpha)$ denote the positive $\boxplus$-stable distribution
with parameter $\widetilde\alpha$ $($see \eg Section~4 in~\cite{AP:2009}$)$.
Let $\xi$ be a random variable with distribution
$\sigma_p(\widetilde\alpha) \boxtimes B(1)$,
and let $G$ be the distribution function of $|\xi|$.
Let $\F_n = \Y_n \T_n \U_n^*$, where $\Y_n$ is a Gaussian random matrix,
$\T_n$ is a deterministic diagonal matrix with the elements $G(\tfrac{j}{n+1})$, $j=1,\hdots,n$,
on the~main~diagonal, $\U_n$ is a random unitary matrix $($with Haar distribution$)$,
and $\Y_n$ and $\U_n$ are independent.
Then $\mu(\F_n) \to \myh(\sigma_s(\alpha))$ weakly in probability.
\end{proposition}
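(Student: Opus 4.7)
The strategy is to apply Theorem~\ref{thm:main} together with relation~\eqref{eq:H}, so that matters reduce to identifying the limiting symmetrised singular value distribution $\mu_\V$ of the matrices $\F_n$ as $\sigma_s(\alpha)$.

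First, $\F_n = \Y_n \T_n \U_n^*$ is bi-unitary invariant: for fixed unitaries $U_1, U_2$ of size $n\times n$, $U_1 \F_n U_2 = (U_1 \Y_n) \T_n (U_2^* \U_n)^*$, and by bi-unitary invariance of the Ginibre matrix and translation invariance of Haar measure, $(U_1\Y_n, U_2^* \U_n)$ has the same joint distribution as $(\Y_n, \U_n)$. Next, since $\U_n^* \U_n = \I_n$,
\[
\F_n \F_n^* = \Y_n \T_n^2 \Y_n^* \,,
\]
so the Haar factor drops out of the squared singular value distribution. By construction, the diagonal entries of $\T_n$ sample the quantiles of $|\xi|$ at the points $j/(n+1)$, so $\mu(\T_n)$ converges weakly to the law of $|\xi|$, and hence $\mu(\T_n^2) \to \mu_{|\xi|^2} =: \mu_{T^2}$. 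Combining the Marchenko--Pastur law for $\Y_n\Y_n^*$ with the asymptotic freeness of a Ginibre matrix from an independent deterministic matrix (Proposition~\ref{prop:asymptoticfreeness}(a)) then yields $\mu(\F_n \F_n^*) \to \gamma \boxtimes \mu_{T^2}$ weakly in probability.

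The second step is a calculation with $S$-transforms. By multiplicativity, $S_\W(z) = S_\gamma(z) \, S_{\mu_{T^2}}(z) = S_{\mu_{T^2}}(z)/(z+1)$. The measure $\mu_{T^2}$ is the push-forward under $x \mapsto x^2$ of $\mu_\xi := \sigma_p(\widetilde\alpha) \boxtimes B(1)$. Using the $S$-transforms $S_{\sigma_p(\widetilde\alpha)}(z) \propto z^{(1-\widetilde\alpha)/\widetilde\alpha}$ and $S_{B(1)}(z) = \sqrt{(z+1)/z}$, together with the general identity $S_{\myq(\mu)}(z) = \frac{z}{z+1} S_\mu(z)^2$ relating a symmetric distribution on $\myreal^*$ to its squared push-forward, one finds
\[
S_{\mu_{T^2}}(z) = \frac{z^{2/\alpha-1}}{b^{2/\alpha}} \,, \qquad b := e^{(-\pi + \alpha\pi/2)\i} \,,
\]
after noting $2/\alpha - 1 = 2(1-\widetilde\alpha)/\widetilde\alpha$. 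Inverting the same squaring identity on the $\V$-side gives $S_\V(z) = z^{1/\alpha - 1}/b^{1/\alpha}$, which by Proposition~\ref{prop:symmetric-free-stable} is exactly the $S$-transform of $\sigma_s(\alpha)$; hence $\mu_\V = \sigma_s(\alpha)$. To conclude, Proposition~\ref{prop:asymptoticfreeness}(d) applied to $\V_n$ and $\J_n(\alpha)$ supplies $\mu(\V_n + \J_n(\alpha)) \to \sigma_s(\alpha) \boxplus B(\alpha)$ weakly in probability for every $\alpha \in \mycmplx$, so hypotheses (b) and (c) of Theorem~\ref{thm:main} are in force and relation~\eqref{eq:H} produces $\mu(\F_n) \to \myh(\sigma_s(\alpha))$, as required.

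The principal obstacle is that $\T_n$ has \emph{unbounded} entries (the heavy tails of the positive stable law force this), so neither $\F_n \F_n^*$ nor $\V_n$ has bounded operator norm. Consequently, the ``asymptotic freeness'' statements above must be read in the sense of convergence of the appropriate transforms rather than moments, and the technical conditions (C0), (C1), (C2) required by Theorem~\ref{thm:main} have to be verified via the regularisation procedure of Section~\ref{sec:limit-theorem}, in complete analogy with the passage from the formal proofs of Theorems~\ref{thm:new} and~\ref{thm:newpower} to their rigorous versions in Section~\ref{sub:rigor}. A secondary and purely bookkeeping obstacle is the correct choice of branches for the fractional powers of $z$ and of $b$ appearing in the $S$-transform identifications.
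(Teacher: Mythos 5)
Your overall strategy and your $S$-transform identification coincide with the paper's: one reduces to showing that the limiting symmetrized singular value distribution of $\F_n$ is $\sigma_s(\alpha)$, computes $S_{\xi^2}(z)=\tfrac{z}{z+1}S_\xi^2(z)=z^{(2/\alpha)-1}/b^{2/\alpha}$ and then $S_\V(z)=z^{(1/\alpha)-1}/b^{1/\alpha}$, and concludes via Proposition \ref{prop:symmetric-free-stable}, Theorem \ref{thm:main} and \eqref{eq:H}. That part of your argument is correct and is essentially verbatim the paper's computation (it is equivalent to the identity $\sigma_s(\alpha)=\sigma_p(\widetilde\alpha)\boxtimes\sigma_{\text{Wigner}}$ of Arizmendi and Perez-Abreu).

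The gap lies in the two technical points you flag at the end but do not resolve, and your proposed remedy points at the wrong machinery. The regularisation procedure of Section \ref{sec:limit-theorem} replaces inverses $(\X_n)^{-1}$ by $(\X_n^*\X_n+t\I_n)^{-1}\X_n^*$; no inverses occur here, so Condition A is not the issue and that procedure buys you nothing. The actual obstacle is that $\sigma_p(\widetilde\alpha)$ is heavy-tailed, so $\T_n$ is unbounded and the moment hypotheses of Proposition \ref{prop:asymptoticfreeness} fail. The paper's fix is to truncate the \emph{scalar}: set $\xi(t):=(\xi\wedge t)\vee(-t)$, form the corresponding $\T_n(t)$, $\F_n(t)$, $\V_n(t)$, $\W_n(t)$, apply almost sure asymptotic freeness to the bounded truncated matrices to get $\mu(\W_n(t))\to\gamma\boxtimes\myl(\xi(t)^2)$ and $\mu(\V_n(t)+\J_n(\alpha))\to\myq^{-1}(\gamma\boxtimes\myl(\xi(t)^2))\boxplus B(\alpha)$, and then remove the truncation by a rank argument: for every $\varepsilon>0$ there is $t>0$ with $\tfrac1n\rank(\T_n-\T_n(t))\le\varepsilon$, hence $\tfrac1n\rank(\F_n-\F_n(t))\le\varepsilon$, for all $n$, and the rank inequality transfers the weak limits to the untruncated matrices. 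This truncation-plus-rank step is the missing idea. Likewise, Conditions $(C0)$--$(C2)$ are not obtained "in complete analogy" with Section \ref{sub:rigor}: one must check that the matrices $\T_n\U_n^*$ satisfy Condition C$_{\mathrm{simple}}$ of Remark \ref{rem:C-simple}, which uses the specific facts that the density of $\sigma_p(\widetilde\alpha)$ vanishes in a neighbourhood of the origin and decays like $x^{-\widetilde\alpha-1}$ (so that only $o(n)$ diagonal entries of $\T_n$ are very small or very large), and then apply Lemma \ref{lemma:C++}\,(a) with $\A_n=\I_n$, $\X_n=\Y_n$, $\B_n=\T_n\U_n^*$ and $\C_n=\mathbf{0}$ to conclude that $\F_n$ satisfies Condition C. Without these two steps the appeal to Theorem \ref{thm:main} is not justified.
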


Here and below, the free multiplicative convolution $\nu \boxtimes \mu$ of a distribution $\nu$ on~$(0,\infty)$
and a symmetric distribution $\mu$ on $\myreal^*$ is defined as in \cite{AP:2009}.
Then $\nu \boxtimes \mu$ is~again a symmetric distribution on $\myreal^*$,
and by Lemma 8 in \cite{AP:2009}, we have
\begin{align}
\label{eq:arizmendi}
\myq(\nu \boxtimes \mu) = \nu \boxtimes \myq \mu \boxtimes \nu \,.
\end{align}

\medskip

\begin{proof}[Proof of Proposition \ref{prop:matrixmodel}]
Fix $t > 0$, set $\xi(t) := (\xi \wedge t) \vee (-t)$,
and let $\T_{n}(t)$ and $\F_{n}(t)$
be defined as in the proposition, but with $\xi$ replaced by $\xi(t)$.
Additionally, let $\V_n(t)$ and $\W_n(t)$ be defined as in \eqref{eq:hermitization}.
Clearly, the~matrices $\T_n(t) \T_n(t)^*$ are deterministic
with $\mu(\T_n(t) \T_n(t)^*) \to \myl(\xi(t)^2)$ weakly.
Moreover, it is well-known that $\mu(\Y_n \Y_n^*) \to \gamma$
weakly almost surely. Since the~matrices $\Y_n$ are bi-unitary invariant,
it follows by almost~sure asymptotic freeness (see~\cite[Section 4.3]{HP:2000})
that
$$
\mu(\W_n(t)) \to \gamma \boxtimes \myl(\xi(t)^2) \ \text{weakly almost surely} \,.
$$
Also, since the~matrices $\Y_n \T_n \U_n^*$ are bi-unitary invariant,
it follows by almost~sure asymptotic freeness (see \cite[Section 4.3]{HP:2000})
that
$$
\mu(\V_n(t) + \J_n(\alpha)) \to \myq^{-1}(\gamma \boxtimes \myl(\xi(t)^2)) \boxplus B(\alpha) \ \text{weakly almost surely} \,,
$$
for any $\alpha \in \mathbb{C}$.
Now, it is easy to see that for any $\varepsilon > 0$,
there exists some $t > 0$ such that $\rank(\T_n - \T_n(t)) \leq \varepsilon n$
\pagebreak[1] and therefore $\rank(\F_n - \F_n(t)) \leq \varepsilon n$
for~all~$n \in \mynat$. This implies
$$
\mu(\W_n) \to \gamma \boxtimes \myl(\xi^2) \ \text{weakly almost surely}
$$
as well as
$$
\mu(\V_n + \J_n(\alpha)) \to \myq^{-1}(\gamma \boxtimes \myl(\xi^2)) \boxplus B(\alpha) \ \text{weakly almost surely} \,,
$$
for any $\alpha \in \mathbb{C}$.

We will show that 
\begin{align}
\label{eq:KT}
\mu_\V =  \myq^{-1}(\gamma \boxtimes \myl(\xi^2)) = \sigma_s(\alpha) \,.
\end{align}
It is well-known that the $S$-transform of $\sigma_p(\widetilde\alpha)$
is given by $z^{(1/\widetilde\alpha)-1}/b^{1/\alpha}$,
where $b$ is the same as in Proposition \ref{prop:symmetric-free-stable},
and that the $S$-transform of $B(1)$ is given by $\sqrt{\frac{z+1}{z}}$.
\linebreak[2] Thus, the $S$-transform of $\xi^2$ is given by 
$$
S_{\xi^2}(z) = \frac{z}{z+1} S_\xi^2(z) = \frac{z}{z+1} \left(\frac{z^{(1/\widetilde\alpha)-1}}{b^{1/\alpha}} \sqrt{\frac{z+1}{z}}\right)^2 = \frac{z^{(2/\widetilde\alpha)-2}}{b^{2/\alpha}} = \frac{z^{(2/\alpha)-1}}{b^{2/\alpha}}  \,,
$$
and the $S$-transform of $\mu_{\V}$ is given by
$$
S_{\V}(z) = \sqrt{\frac{z+1}{z} S_\W(z)} = \sqrt{\frac{z+1}{z} \frac{1}{z+1} \frac{z^{(2/\alpha)-1}}{b^{2/\alpha}}} = \frac{z^{(1/\alpha)-1}}{b^{1/\alpha}} \,,
$$
which proves our claim \eqref{eq:KT} by Proposition \ref{prop:symmetric-free-stable}.

Using the fact that the positive $\boxplus$-stable distribution $\sigma_p(\widetilde\alpha)$ 
has a density which vanishes in a neighborhood of the origin \cite[Theorem A.1.4]{BP:1999}
and which is of order $\myo(x^{-\widetilde\alpha-1})$ as $x \to \infty$ \cite[Theorem A.2.1]{BP:1999},
it is straightforward to check that the~matrices $\T_n \U_n^*$  satisfy Condition $C_\text{simple}$
introduced in Remark \ref{rem:C-simple} below.
Thus, it follows from~Lemma~\ref{lemma:C++} below that the~matrices 
$\F_n = \Y_n \T_n \U_n^*$ satisfy the conditions $(C0)$, $(C1)$ and $(C2)$ from Theorem~\ref{thm:main}.
We~may therefore invoke this theorem to~conclude that $\mu(\F_n) \to \myh(\sigma_s(\alpha))$
weakly in~probability.
\end{proof}

\begin{remark}
Proposition \ref{prop:matrixmodel} is closely related to an observation in \textsc{Arizmendi} and \textsc{Perez-Abreu} \cite{AP:2009}
which states that
\begin{align}
\label{eq:AP}
\sigma_s(\alpha) = \sigma_p(\widetilde\alpha) \boxtimes \sigma_{\text{Wigner}} \,,
\end{align}
where $\sigma_{s}(\alpha)$ and $\sigma_{p}(\widetilde\alpha)$ denote the symmetric and positive $\boxplus$-stable distribution
of~parameter $\alpha$ and $\widetilde\alpha$, respectively.
In fact, using \eqref{eq:arizmendi} and the random variable $\xi$ from~above, we have 
\begin{multline*}
  \myq ( \sigma_{p}(\widetilde\alpha) \boxtimes \sigma_{\text{Wigner}} )
= \sigma_{p}(\widetilde\alpha) \boxtimes \gamma \boxtimes \sigma_{p}(\widetilde\alpha) \\
= \gamma \boxtimes \Big( \sigma_{p}(\widetilde\alpha) \boxtimes \delta_1 \boxtimes \sigma_{p}(\widetilde\alpha) \Big)
= \gamma \boxtimes \myq ( \sigma_{p}(\widetilde\alpha) \boxtimes B(1) )
= \gamma \boxtimes \myl(\xi^2) \,,
\end{multline*}
which shows that the relation \eqref{eq:AP} is equivalent to the relation \eqref{eq:KT}
checked in the preceding proof.
\qed
\end{remark}

\medskip

\section{A General Limit Theorem for Sums of Products \\ of Independent Random Matrices}
\label{sec:limit-theorem}

\subsection{Overview}
\label{sub:overview}

In this section we prove a general result (see Theorem~\ref{thm:limit-theorem} below)
about the limiting singular value and eigenvalue distributions of sums of products 
of independent Girko--Ginibre matrices and their inverses. In particular, 
this result contains Proposition \ref{prop:simple-limit-theorem} from the introduction,
and it allows for a rigorous proof of Theorem \ref{thm:new}. \pagebreak[2]
To derive Theorem \ref{thm:limit-theorem},
we apply the general framework from \cite{GKT:2014}.
In~Subsection \ref{sub:framework}, we~summarize the technical conditions
and the main universality results from \cite{GKT:2014} to make the presentation self-contained.
In Subsection \ref{sub:main-result}, we~state Theorem \ref{thm:limit-theorem}.
Subsections \ref{sub:condition-A} -- \ref{sub:condition-C}
prepare for the proof of Theorem~\ref{thm:limit-theorem} 
by verifying the technical conditions from \cite{GKT:2014}.
Subsection \ref{sub:main-proof} contains the proof of Theorem \ref{thm:limit-theorem}.
Finally, in Subsection \ref{sub:rigor}, we sketch the rigorous proof
of Theorems \ref{thm:new} and \ref{thm:newpower}.
Some relevant results from the~literature are collected in Section~\ref{sec:auxiliary-results}.

\subsection{General Framework}
\label{sub:framework}

We consider random matrices of the form
\begin{align}
\label{eq:sums-of-products-2}
\F_n := \sum_{q=1}^{m} \F_n^{(q)} := \sum_{q=1}^{m} \prod_{r=1}^{l} (\X_n^{((q-1)l+r)})^{\varepsilon_{r}} \,,
\end{align}
where $m \in \mynat$, $l \in \mynat$ and $\varepsilon_1,\hdots,\varepsilon_l \in \{ -1,+1 \}$ are fixed
and the~$\X_n^{(q)}$ are in\-dependent Girko--Ginibre matrices as in the introduction.
%
%
A~major~step in \cite{GKT:2014} is to~prove the \emph{universality}
of the limiting singular value and eigenvalue distributions, i.e.\@ to~show 
that these distributions (if existent) do not depend on the distributions 
of the matrix entries apart from a few moment conditions
as in \eqref{eq:real-moments} -- \eqref{eq:uniform-integrability}.
To~state this more precisely, we need two sets of random matrices.

To this end, it seems convenient to view $\mathbf{F}_n$ as a \emph{matrix function}
(by slight abuse of notation) and to write
\begin{align}
\label{eq:matrixfunction}
\mathbf{F}_n(\Z_n^{(1)},\hdots,\Z_n^{(ml)}) := \sum_{q=1}^{m} \prod_{r=1}^{l} (\Z_n^{((q-1)l+r)})^{\varepsilon_{r}} \,,
\end{align}
where $m \in \mynat$, $l \in \mynat$ and $\varepsilon_1,\hdots,\varepsilon_l \in \{ -1,+1 \}$
are the same as in \eqref{eq:sums-of-products}
and $\Z_n^{(q)} = (Z^{(q)}_{jk})_{jk=1,\hdots,n}$
is a matrix in the indeterminates $Z^{(q)}_{jk}$, $q=1,\hdots,ml$.
Then, we may write $\F_n(\X) := \F_n(\X_n^{(1)},\dots,\X_n^{(ml)})$ 
for the random matrices built from the random matrices
$\X_{n}^{(q)} := (\tfrac{1}{\sqrt{n}} X_{jk}^{(q)})_{j,k=1,\hdots,n}$
and $\F_n(\Y) := \F_n(\Y_n^{(1)},\dots,\Y_n^{(ml)})$ 
for the corresponding random matrices built from the \emph{Gaussian} random matrices
$\Y_{n}^{(q)} := (\tfrac{1}{\sqrt{n}} Y_{jk}^{(q)})_{j,k=1,\hdots,n}$.
We always assume that the families 
$(X_{jk}^{(q)})_{j,k,q \in \mynat}$ and $(Y_{jk}^{(q)})_{j,k,q \in \mynat}$
are defined on the same probability space and independent.
When the choice of the matrices $\X_n^{(1)},\hdots,\X_n^{(ml)}$ is clear from the context, 
we~also write $\mathbf{F}_n$ instead of $\mathbf{F}_n(\X)$.

\begin{remark*}
More generally, using the arguments from this section, we might deal with matrix functions of the form
$$
\F_n := \sum_{q=1}^{m} \F_n^{(q)} := \sum_{q=1}^{m} \prod_{r=1}^{l_q} (\Z_n^{(i_{q,r})})^{\varepsilon_{q,r}} \,,
$$
where $m,l_1,\hdots,l_m \in \mynat$, $\varepsilon_{q,r} \in \{ +1,-1 \}$, 
the~indices $i_{q,r} \in \mynat$ are pairwise different, and all~parameters do not depend on $n$.
That is to say, the numbers and the types of the factors in the $m$ summands need not be the same.
\end{remark*}

In our investigation of the limiting spectral distributions of the matrices $\F_n$,
we will also consider the \emph{shifted matrices} $\mathbf{F}_n - \alpha \I_n$, with $\alpha \in \mathbb{C}$,
the \emph{regularized matrices} $\mathbf{F}_{n,t}$, with $t > 0$,
and their combinations $\mathbf{F}_{n,t} - \alpha \I_n$.
Here, the regularized matrices $\mathbf{F}_{n,t}$ arise from the regularized matrix functions
\begin{align}
\label{eq:regularizedmatrixfunction}
\mathbf{F}_{n,t}(\Z_n^{(1)},\hdots,\Z_n^{(ml)}) := \sum_{q=1}^{m} \prod_{r=1}^{l} (\Z_n^{((q-1)l+r)})_t^{\varepsilon_{r}} \,,
\end{align}
where $(\Z_n)_t^\varepsilon := \Z_n$ for $\varepsilon = +1$ 
and $(\Z_n)_t^\varepsilon := (\Z_n^* \Z_n + t \I_n)^{-1} \Z_n^*$ for $\varepsilon = -1$.
Note~that, by definition, the regularization has no effect when $\varepsilon = +1$
and~that $\lim_{t \downarrow 0} (\Z_n)^{-1}_t = (\Z_n)^{-1}$ when $\Z_n$ is invertible.

Furthermore, fix a sequence $(\tau_n)_{n \in \mynat}$ of positive real numbers
such that $\tau_n \to 0$ and $\tau_n \sqrt{n} \to \infty$,
and set
$$
\widehat{X}_{jk}^{(q)} := X_{jk}^{(q)} \, \pmb{1}_{\{ |X_{jk}^{(q)}| \leq \tau_n \sqrt{n} \}} \,,\quad
\widehat{Y}_{jk}^{(q)} := Y_{jk}^{(q)} \, \pmb{1}_{\{ |Y_{jk}^{(q)}| \leq \tau_n \sqrt{n} \}} \qquad
(j,k,q \in \mynat),
$$
$$
Z_{jk}^{(q)}(\varphi) := (\cos \varphi) \, \widehat{X}_{jk}^{(q)} + (\sin \varphi) \, \widehat{Y}_{jk}^{(q)} \qquad
(j,k,q \in \mynat; \, 0 \leq \varphi \leq \tfrac{\pi}{2}),
$$
and for $n \in \mynat$ and $0 \leq \varphi \leq \tfrac{\pi}{2}$,
set $\Z_{n}^{(q)}(\varphi) := (\tfrac{1}{\sqrt{n}} Z_{jk}^{(q)}(\varphi))_{j,k=1,\hdots,n}$
($q=1,\hdots,ml$),
$\F_n(\varphi) := \F_n(\Z_n^{(1)}(\varphi),\hdots,\Z_n^{(ml)}(\varphi))$.
Note that $\F_n(0) = \F_n(\widehat\X)$, $\F_n(\tfrac{\pi}{2}) = \F_n(\widehat\Y)$,
where $\F_n(\widehat\X)$ and $\F_n(\widehat\Y)$ are defined analogously to $\F_n(\X)$ and $\F_n(\Y)$.

\pagebreak[2]

For $n \in \mynat$, $0 \leq \varphi \leq \frac{\pi}{2}$ and $z \in \mathbb{C}^+$, introduce the Hermitian matrix
$$
\V_n(\varphi) := \left[ \begin{array}{cc} \mathbf{O} & \F_n(\varphi) \\ \F_n(\varphi)^* & \mathbf{O} \end{array} \right] \,,
$$
and the traces
\begin{align*}
g_{jk}^{(q)} &:= \trace \bigg( \frac{\partial \V_n(\varphi)}{\partial \re Z_{jk}^{(q)}} ( \V_n(\varphi) - z \I_{2n} )^{-2} \bigg) \,,
\\
\widehat{g}_{jk}^{(q)} &:= \trace \bigg( \frac{\partial \V_n(\varphi)}{\partial \im Z_{jk}^{(q)}} ( \V_n(\varphi) - z \I_{2n} )^{-2} \bigg) \,.
\end{align*}
The dependence on $n \in \mynat$, $0 \leq \varphi \leq \frac{\pi}{2}$ and $z \in \mathbb{C}^+$ is implicit here.
Also, when taking partial derivatives, we view $\mathbf{F}_n(\Z_n^{(1)},\hdots,\Z_n^{(ml)})$ 
as a function of the indeterminates $Z_{jk}^{(q)}$ (the~elements of the matrices $\Z_n^{(q)}$).
The same convention applies to partial derivatives such as
${\partial g_{jk}^{(q)}}/{\partial \re Z_{jk}^{(q)}}$,
${\partial \widehat{g}_{jk}^{(q)}}/{\partial \re Z_{jk}^{(q)}}$
etc.
Finally, for $0 \leq \theta \leq 1$, \linebreak[2] let 
$g_{jk}^{(q)}(\theta)$,
$\widehat{g}_{jk}^{(q)}(\theta)$,
${\partial g_{jk}^{(q)}}(\theta)/{\partial \re Z_{jk}^{(q)}}$,
${\partial \widehat{g}_{jk}^{(q)}}(\theta)/{\partial \re Z_{jk}^{(q)}}$,
etc.
denote the functions obtained from 
$g_{jk}^{(q)}$,
$\widehat{g}_{jk}^{(q)}$,
${\partial g_{jk}^{(q)}}/{\partial \re Z_{jk}^{(q)}}$,
${\partial \widehat{g}_{jk}^{(q)}}/{\partial \re Z_{jk}^{(q)}}$,
etc.
by replacing $Z_{jk}^{(q)}(\varphi)$ with $\theta Z_{jk}^{(q)}(\varphi)$.

\pagebreak[2]

Given a sequence of random matrices $\F_{n}$ as in \eqref{eq:matrixfunction} and a constant $t > 0$,
we~denote by $\F_{n,t}$ the associated regularized random matrices
as in \eqref{eq:regularizedmatrixfunction}.
With this notation, we~have to check the following Conditions A, B and C:

\def\supnorm#1{\Big\|#1\Big\|_{\infty}}
\def\condexp#1{\E\Big\{#1\Big|X_{jk}^{(q)}, Y_{jk}^{(q)}\Big\}}
\def\g{g_{jk}^{(q)}(\theta)}
\def\gx{\frac{\partial g_{jk}^{(q)}(\theta)}{\partial {\re Z_{jk}^{(q)}}}}
\def\gy{\frac{\partial g_{jk}^{(q)}(\theta)}{\partial {\im Z_{jk}^{(q)}}}}
\def\gxx{\frac{\partial^2 g_{jk}^{(q)}(\theta)}{\partial (\re Z_{jk}^{(q)})^2}}
\def\gyy{\frac{\partial^2 g_{jk}^{(q)}(\theta)}{\partial (\im Z_{jk}^{(q)})^2}}
\def\gxy{\frac{\partial^2 g_{jk}^{(q)}(\theta)}{\partial {\re Z_{jk}^{(q)}} \, \partial {\im Z_{jk}^{(q)}}}}
\def\h{\widehat{g}_{jk}^{(q)}(\theta)}
\def\hx{\frac{\partial \widehat{g}_{jk}^{(q)}(\theta)}{\partial {\re Z_{jk}^{(q)}}}}
\def\hy{\frac{\partial \widehat{g}_{jk}^{(q)}(\theta)}{\partial {\re Z_{jk}^{(q)}}}}
\def\hxx{\frac{\partial^2 \widehat{g}_{jk}^{(q)}(\theta)}{\partial (\re Z_{jk}^{(q)})^2}}
\def\hyy{\frac{\partial^2 \widehat{g}_{jk}^{(q)}(\theta)}{\partial (\im Z_{jk}^{(q)})^2}}
\def\hxy{\frac{\partial^2 \widehat{g}_{jk}^{(q)}(\theta)}{\partial {\re Z_{jk}^{(q)}} \, \partial {\im Z_{jk}^{(q)}}}}

\pagebreak[2]
\smallskip

\textbf{Condition A:} \\
For $\F_n = \F_n(\X)$ and $\F_n = \F_n(\Y)$,
the matrices $\F_n$ satisfy the~following con\-dition:

For each $\alpha \in \mathbb{C}$ and $z \in \mathbb{C}^{+}$,
we have $\lim_{t \to 0} \limsup_{n \to \infty} |s_{n,t}(z) - s_n(z)| = 0$ in~probability,
where $s_n(z)$ and $s_{n,t}(z)$ are the Stieltjes transforms of the Hermitian matrices 
$(\F_n - \alpha \I_n)(\F_n  - \alpha \I_n)^*$ and $(\F_{n,t}  - \alpha \I_n)(\F_{n,t} - \alpha \I_n)^*$, 
respectively.

\pagebreak[2]
\smallskip

\textbf{Condition B:} \\
For each $t > 0$, $\alpha \in \mathbb{C}$ and $z \in \mycmplx^+$, 
the functions $\g$ $(0 \le \theta \le 1)$ associated with 
the matrix functions $\mathbf{F}_{n,t} - \alpha \I_n$ 
satisfy the following bounds:
\begin{align}
&\sup_{j,k,q} \supnorm{\condexp{\g}} \le A_0
\label{B0}\tag{B0} \displaybreak[2]\\
&\sup_{j,k,q} \max\Big\{ \supnorm{\condexp{\gx}},\supnorm{\condexp{\gy}} \Big\} \le A_1
\label{B1}\tag{B1} \displaybreak[2]\\
&\sup_{j,k,q} \max\Big\{ \supnorm{\condexp{\gxx}},\supnorm{\condexp{\gyy}},\notag\\
&\qquad\qquad\qquad\qquad\qquad\qquad \supnorm{\condexp{\gxy}} \Big\} \le A_2
\label{B2}\tag{B2}
\notag
\end{align}
Here $A_0,A_1,A_2$ are certain constants which may depend on $t > 0$, $\alpha \in \mycmplx$ and $z \in \mycmplx^+$
but not on $n \in \mynat$, $\varphi \in [0,\tfrac\pi2]$ or $\theta \in [0,1]$.
Furthermore, similar bounds hold for the~functions $\widehat{g}_{jk}^{(q)}(\theta)$ $(0 \le \theta \le 1)$
and their partial derivatives. 

\pagebreak[3]
\smallskip

\textbf{Condition C:} \\
For $\F_n = \F_n(\X)$ and $\F_n = \F_n(\Y)$,
the matrices $\F_n$ satisfy the~following con\-ditions:

($C0$) There exists some $p>0$ such that 
\begin{align*}
\frac1n\sum_{k=1}^ns_k^p(\F_{n})
\end{align*}
is bounded in probability as $n \to \infty$.

($C1$) For any fixed $\alpha \in \mathbb{C}$, 
there exists some $Q>0$ such that
\begin{align*}
\lim_{n\to\infty}\pp\Big(s_n(\F_{n}-\alpha \I_n)\le n^{-Q}\Big)=0.
\end{align*}

($C2$) For any fixed $\alpha \in \mathbb{C}$, 
there exists some $0 < \gamma < 1$ such that
for \emph{any} sequence $(\delta_n)_{n \in \mynat}$ with $\delta_n \to 0$,
\begin{equation*}
\lim_{n\to\infty}\pp\Big(\frac1n\sum_{{n_1}\le j\le{ n_2} }| \log s_j(\F_{n}-\alpha \I_n)|>\varepsilon\Big)=0
\quad\text{for all $\varepsilon > 0$},
\end{equation*}
where $n_1=[n-n\delta_n]+1$ and $n_2=[n-n^{\gamma}]$.

\pagebreak[1]
\medskip

\def\CS{C$_\text{simple}$\xspace}

\begin{remark}[Condition \CS]
\label{rem:C-simple}
It will be convenient to consider Condition~C
for more general matrices $\F_n$ than in \eqref{eq:sums-of-products-2}.
Thus, if a sequence of random matrices $\F_n$ (with $\F_n$ of dimension $n \times n$)
satisfies Conditions ($C0$), ($C1$) and ($C2$), 
we say that the~matrices $\F_n$ satisfy \emph{Condition~C}.
Also, if a sequence of random matrices $\F_n$ (with $\F_n$ of dimension $n \times n$)
satisfies Conditions ($C0$) as well as Conditions ($C1$) and ($C2$) with $\alpha = 0$,
we say that the~matrices $\F_n$ satisfy \emph{Condition~\CS}.
\end{remark}

\pagebreak[1]
\medskip

The following result is essentially contained in \cite{GKT:2014}:

\begin{theorem}[Universality of Singular Value and Eigenvalue Distributions]
\label{thm:universality} \ \newline
Let $\F_n(\X)$, $\F_n(\Y)$ be defined as above,
and let $\nu_n(\X)$, $\nu_n(\Y)$ and $\mu_n(\X)$, $\mu_n(\Y)$
denote the associated singular value and eigenvalue distributions, respectively.
\begin{enumerate}[(a)]
\item
If Conditions A and B hold, we have 
$$
\nu_n(\X) - \nu_n(\Y) \to 0 \qquad \text{weakly in probability.}
$$
\item
If Conditions A, B and C hold, we have
$$
\mu_n(\X) - \mu_n(\Y) \to 0 \qquad \text{weakly in probability.}
$$
\end{enumerate}
\end{theorem}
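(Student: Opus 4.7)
The overall strategy is a Lindeberg-type replacement argument combined, for part (b), with the logarithmic-potential approach of Girko; this closely parallels the proof in \cite{GKT:2014}.

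\textbf{Part (a).} The plan is to replace $\X$-entries by $\Y$-entries one at a time and control the effect on the Stieltjes transform of the Hermitization $\V_n$. I would first apply the truncations $\widehat X_{jk}^{(q)}$, $\widehat Y_{jk}^{(q)}$ introduced in Subsection \ref{sub:framework}: by \eqref{eq:uniform-integrability} together with \eqref{eq:real-moments}--\eqref{eq:complex-moments}, this alters the singular value distribution of $\F_n$ only negligibly. Fix $t > 0$, $\alpha \in \mycmplx$ and $z \in \mycmplx^+$, and consider the Stieltjes transform $s_{n,t}(\varphi;z) := \tfrac{1}{2n}\trace\bigl(\V_{n,t}(\varphi) + \J_n(\alpha) - z\I_{2n}\bigr)^{-1}$ along the path $\varphi \in [0,\tfrac\pi2]$. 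Writing $s_{n,t}(\tfrac\pi2;z) - s_{n,t}(0;z)$ as a telescoping sum over the individual replacements $\widehat X_{jk}^{(q)} \rightsquigarrow \widehat Y_{jk}^{(q)}$ and Taylor-expanding each summand around $Z_{jk}^{(q)} = 0$, the zeroth-, first- and second-order contributions cancel between $\X$ and $\Y$ by the matched means, variances and pseudo-variances in \eqref{eq:real-moments}--\eqref{eq:complex-moments}, while the remainder is controlled by \eqref{B0}--\eqref{B2} combined with the truncation $|\widehat X_{jk}^{(q)}|, |\widehat Y_{jk}^{(q)}| \leq \tau_n\sqrt n$. This yields $|s_{n,t}(0;z) - s_{n,t}(\tfrac\pi2;z)| \to 0$ in probability. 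Finally, Condition A removes the regularization as $t \to 0$, so that the Stieltjes transforms of the symmetrized singular value distributions in the $\X$- and $\Y$-cases coincide in the limit, giving $\nu_n(\X) - \nu_n(\Y) \to 0$ weakly in probability.

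\textbf{Part (b).} Here I would invoke Girko's Hermitization via logarithmic potentials. Applying part (a) to the shifted matrices $\F_n - \alpha\I_n$ (whose Hermitization is $\V_n + \J_n(\alpha)$, still accessible through the same framework) shows that the symmetrized singular value distributions $\mu(\V_n(\X) + \J_n(\alpha))$ and $\mu(\V_n(\Y) + \J_n(\alpha))$ agree in the limit, for each $\alpha \in \mycmplx$. The task is then to upgrade this to convergence of the logarithmic potentials $U_n^{(\cdot)}(\alpha) := -\int \log|x| \, d\mu(\V_n(\cdot) + \J_n(\alpha))(x)$. Condition ($C0$) controls the large-$|x|$ contribution via an $L^p$ bound on the singular values, while Conditions ($C1$)--($C2$) control the small-$|x|$ contribution through a polynomial-in-$n$ lower bound on $s_{\min}(\F_n - \alpha\I_n)$ together with uniform $\log$-integrability on the lower bulk. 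Together these imply $U_n^{(\X)}(\alpha) - U_n^{(\Y)}(\alpha) \to 0$ in probability for each $\alpha$, and a standard tightness-plus-log-potential-inversion argument (cf.\ \cite{BC:2012}) upgrades this to $\mu_n(\X) - \mu_n(\Y) \to 0$ weakly in probability.

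\textbf{Main obstacle.} The hardest step is the logarithmic-potential control near the origin in part (b). Weak convergence of singular value distributions does \emph{not} by itself imply convergence of $\int \log|x|\,d\mu(x)$, because a single atypically small singular value of $\F_n - \alpha\I_n$ can produce an arbitrarily large contribution to $U_n$. Conditions ($C1$) and ($C2$) are tailored precisely to rule this out; verifying them for the concrete sum-of-products structure \eqref{eq:sums-of-products-2} (carried out in Subsections \ref{sub:condition-A}--\ref{sub:condition-C}) constitutes the real technical content behind the theorem. The Lindeberg step itself is largely mechanical once the bounds in Condition B are in place, and Condition A provides a routine regularization-removal mechanism; by contrast, the small-singular-value control requires genuine input from anti-concentration for products of independent matrices.
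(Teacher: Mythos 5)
Your sketch is correct and follows essentially the same route as the paper: the paper's own proof of part (a) simply invokes Condition B together with Theorem 3.2 of \cite{GKT:2014} (which is exactly the Lindeberg/interpolation comparison you describe) and then Condition A to remove the regularization, while part (b) applies the same comparison to the shifted matrices $\F_n - \alpha\I_n$ and cites Condition C with Remark 4.2 of \cite{GKT:2014} (the logarithmic-potential step you spell out). Your identification of the small-singular-value control as the genuine technical content, delegated here to Conditions (C1)--(C2) and their verification in Subsections \ref{sub:condition-A}--\ref{sub:condition-C}, matches the structure of the paper.
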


\pagebreak[2]
\medskip

\begin{proof} \

(a) Set $\alpha := 0$.
For $\Z = \X$ and $\Z = \Y$, let $m_{n}(z;\Z)$ and $s_n(z;\Z)$ denote the~Stieltjes transforms
of the Hermitian matrices
$$
\V_n(\Z) := \left[ \begin{array}{cc} \mathbf{O} & \F_n(\Z) \\ \F_n^*(\Z) & \mathbf{O} \end{array} \right]
\quad\text{and}\quad
\W_n(\Z) := \F_n(\Z) \F_n^*(\Z) \,,
$$
and let $m_{n,t}(z;\Z)$ and $s_{n,t}(z;\Z)$ denote the corresponding Stieltjes transforms
when $\F_n(\Z)$ is replaced with $\F_{n,t}(\Z)$. \pagebreak[2]
Fix $t > 0$. By Condition~B and Theorem 3.2 in~\cite{GKT:2014},
we have, for each $z \in \mycmplx^{+}$,
$m_{n,t}(z;\X) - m_{n,t}(z;\Y) \to 0$ in probability 
and therefore
$s_{n,t}(z;\X) - s_{n,t}(z;\Y) \to 0$ in probability.
It therefore follows from Condition~A that, for each $z \in \mycmplx^{+}$,
$s_{n}(z;\X) - s_{n}(z;\Y) \to 0$ in probability,
which implies the~claim.

\pagebreak[2]

(b) By the same argument as in (a), the conclusion of (a) holds
not only for the singular value distributions of the matrices $\F_n$,
but also for the singular value distributions of the shifted matrices $\F_n - \alpha \I_n$,
for any fixed $\alpha \in \mathbb{C}$.
Thus, the claim follows from Condition~C and Remark~4.2 in~\cite{GKT:2014}.
\end{proof}

\pagebreak[2]

\begin{remark}
As follows from the proof, if one is only interested in the limiting singular value distributions
of the matrices $\F_n$, it suffices to assume that Conditions A and B hold with $\alpha = 0$.
\end{remark}

\pagebreak[2]

\subsection{A General Limit Theorem}
\label{sub:main-result}

We will use Theorem \ref{thm:universality} to establish the following limit theorem,
which contains Proposition \ref{prop:simple-limit-theorem} from the introduction.

\begin{theorem}
\label{thm:limit-theorem} 
Let the matrices $\F_n(\X)$ be defined as in \eqref{eq:sums-of-products}.
Then there exist non-random probability measures $\nu$ and $\mu$ on $(0,\infty)$ and $\mycmplx$,
respectively, such that
$$
\lim_{n \to \infty} \nu_n(\F_n(\X)) = \nu \qquad \text{weakly in probability}
$$
and
$$
\lim_{n \to \infty} \mu_n(\F_n(\X)) = \mu \qquad \text{weakly in probability}
$$
and the limiting distributions are the same as those for the matrices $\F_n(\Y)$
derived from Gaussian random matrices. 
More precisely, the measure $\nu$ is given by
$$
\mys \nu = \Big( \myq^{-1} ( \gamma^{\varepsilon_1} \boxtimes \cdots \boxtimes \gamma^{\varepsilon_l} )  \Big)^{\boxplus m} \,,
$$
with $\mys$ and $\myq$ as in \eqref{eq:S-and-Q},
and the measure $\mu$ is the unique prob\-ability measure on~$\mycmplx$
satisfying \eqref{eq:logpotential}, with $\mu_\V$ replaced by $\mys \nu$
$($or, equivalently, $\mu = \myh(\mys \nu)$, with $\myh$ as in \eqref{eq:H}$)$.
\end{theorem}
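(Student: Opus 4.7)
The plan is to apply the universality framework of Theorem \ref{thm:universality} to reduce the problem to the Gaussian case, and then to identify the limiting singular value and eigenvalue distributions in the Gaussian case via the asymptotic freeness results of Proposition \ref{prop:asymptoticfreeness} and the density formula of Theorem \ref{thm:main}. Thus the proof splits naturally into two tasks: (i) verifying Conditions A, B, C for both $\F_n(\X)$ and $\F_n(\Y)$; (ii) computing the limiting distributions explicitly in the Gaussian case.

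For task (ii), I would proceed as in the formal proof of Theorem \ref{thm:new}. Each summand $\F_n^{(q)}(\Y)$ is a product of Ginibre matrices and inverse Ginibre matrices, which are bi-unitary invariant and mutually independent across $q$. By Proposition \ref{prop:asymptoticfreeness}(a), the limiting squared singular value distribution of each summand equals the free multiplicative convolution $\gamma^{\varepsilon_1} \boxtimes \cdots \boxtimes \gamma^{\varepsilon_l}$, whence the limiting symmetrized singular value distribution of each summand is $\myq^{-1}(\gamma^{\varepsilon_1} \boxtimes \cdots \boxtimes \gamma^{\varepsilon_l})$. By Proposition \ref{prop:asymptoticfreeness}(c), the Hermitizations $\V_n^{(q)}$ are asymptotically free, so the limiting symmetrized singular value distribution of $\F_n(\Y)$ is the $m$-fold free additive convolution of this measure. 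Desymmetrizing via $\mys$ gives the formula for $\nu$ in the statement. For the eigenvalue distribution, Proposition \ref{prop:asymptoticfreeness}(d) yields the limiting distribution of $\V_n(\F_n) + \J_n(\alpha)$ as $\mys\nu \boxplus B(\alpha)$, and Theorem \ref{thm:main} then identifies $\mu = \myh(\mys\nu)$, provided Condition C holds.

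For task (i), I would organize the verification in subsections. Condition B follows from the fact that for the regularized matrix function $\F_{n,t} - \alpha\I_n$, each factor $(\Z_n)_t^{\varepsilon}$ has operator norm bounded by a constant depending only on $t$, so repeated chain rule differentiation (on the fixed number $ml$ of factors) produces traces of products of resolvents whose operator norms can be controlled uniformly in $n$, $\varphi$, $\theta$ by constants depending on $t$, $\alpha$, $\im z$; this reduces to polynomial combinatorics in matrix entries. Condition A requires showing that for each fixed $\alpha$, $\lim_{t\to 0}\limsup_n |s_{n,t}(z)-s_n(z)| = 0$ in probability, which I would establish via a rank/singular-value comparison: with high probability all the factors $\X_n^{(r)}$ are sufficiently invertible (Lemma \ref{lemma:C1}) that $\F_{n,t}$ differs from $\F_n$ in only $o(n)$ singular values at scale independent of $t$, which yields closeness of Stieltjes transforms by the standard rank-inequality.

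The main obstacle will be Condition C, especially $(C1)$: bounding the smallest singular value of $\F_n - \alpha\I_n$ below by $n^{-Q}$ with high probability. Since $\F_n$ is a sum of $m$ products of $l$ (possibly inverse) Girko--Ginibre matrices, one cannot directly quote the standard least singular value bounds for single Girko--Ginibre matrices. I would treat this in two steps: first control the smallest singular values of each factor $\X_n^{(r)}$ individually using the least singular value estimates for Girko--Ginibre matrices (e.g.\ Lemma \ref{lemma:C1}), so that $(C0)$ follows immediately from norm bounds on products; then for $(C1)$ and $(C2)$ reduce $\F_n - \alpha\I_n$ to a polynomial expression in the $\X_n^{(r)}$ by clearing denominators and apply the least singular value machinery to the resulting polynomial (this is where the assumption of independence across the $ml$ factors is crucial). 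Once Conditions A, B, C are in hand for both $\F_n(\X)$ and $\F_n(\Y)$, Theorem \ref{thm:universality} provides the universality and task (ii) identifies the common limit.
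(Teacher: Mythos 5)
Your overall architecture (universality via Conditions A, B, C plus identification of the Gaussian limit by asymptotic freeness) is the same as the paper's, and your treatment of task (ii) and of Condition B is essentially right, modulo the fact that Proposition \ref{prop:asymptoticfreeness} requires compactly supported limits with uniformly bounded moments, so it must be applied to the \emph{regularized} matrices $\F_{n,t}$ (whose squared singular value limits are $\gamma_t^{\varepsilon_1}\boxtimes\cdots\boxtimes\gamma_t^{\varepsilon_l}$) and the limit $t\to 0$ taken afterwards via Condition A; it cannot be applied directly to $\gamma^{-1}$, which has unbounded support and infinite mean. The genuine gaps are in Conditions A and C. For Condition A, the mechanism you propose fails: the regularization $\X^{-1}\mapsto(\X^*\X+t\I)^{-1}\X^*$ is not a low-rank perturbation --- it moves \emph{every} singular value --- and the best Lemma \ref{lemma:C1} gives is $\|\X^{-1}-(\X)^{-1}_t\|\le t\,s_n^{-3}(\X)\le t\,n^{3A}$, which is useless for fixed $t$. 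Decomposing the perturbation into (rank $O(\delta n)$) plus (norm $O(t\delta^{-3})$) does not close either, because the norm part then gets multiplied by the operator norms of the surrounding factors, which are only polynomially bounded in $n$. The paper's Lemma \ref{lemma:A} instead first truncates the singular values of $\B_n$ and $\C_n$ into $[K^{-1},K]$ (a rank modification justified by the convergence of \emph{their} singular value distributions --- which is why the whole proof must be organized as an induction on $l$ and then on $m$, a structure absent from your plan), rewrites $\A\X_u^{-1}\B=\F_u-\C$ so that $\A$ never needs to be controlled, and reduces everything to the estimate $\lim_{t\to0}\limsup_n\int_0^t\tfrac1n\trace(\X\X^*+u\I)^{-1}\,du=0$.

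For Condition C, ``clearing denominators and applying the least singular value machinery to the resulting polynomial'' is not backed by available tools: the only least-singular-value bounds at your disposal (Lemmas \ref{lemma:C1x} and \ref{lemma:C2x}) concern a single Girko--Ginibre matrix shifted by a polynomially bounded matrix, $\X_n-\M_n$, not general polynomials in several such matrices; moreover, in a sum of products the inverses sit in the middle of the factors, so one-sided multiplication does not even produce a polynomial. The missing idea is the paper's pair of factorizations $\A\X\B+\C-\alpha\I=\A(\X+\A^{-1}(\C-\alpha\I)\B^{-1})\B$ and $\A\X^{-1}\B+\C-\alpha\I=\A\X^{-1}(\B(\C-\alpha\I)^{-1}\A+\X)\A^{-1}(\C-\alpha\I)$, which express $\F_n-\alpha\I_n$ as a product of factors each of which is either a product of independent Girko--Ginibre matrices and inverses (handled by Horn's inequality and Lemma \ref{lemma:C-transfer}) or of the exact form $\X+\M$ with $\M$ independent of $\X$ and polynomially bounded, so that Lemmas \ref{lemma:C1x} and \ref{lemma:C2x} apply conditionally; Condition C then follows by induction on $m$ via Lemma \ref{lemma:C++}. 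Without these factorizations, or a substitute for them, your verification of $(C1)$ and $(C2)$ does not go through.
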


The proof of Theorem \ref{thm:limit-theorem} will be given below in Subsection \ref{sub:main-proof}.
The main idea is that the result is true for the~matrices $\X^{(q)}$ and their~inverses $(\X^{(q)})^{-1}$
(see Section~\ref{sec:RMT})
and that the corresponding result for the matrices $\F_n(\X)$ follows from this by means of 
induction on $l$ and on $m$.
To establish the existence of the limiting distributions in the Gaussian case,
\pagebreak[1] we will use tools from free prob\-ability.
To extend this existence to the general case and to establish universality,
we will use Theorem \ref{thm:universality}, of course,
which requires us to verify Conditions A, B and~C. For~this~purpose, 
we provide some auxiliary results in the next 3 subsections.

\subsection{On Condition A}
\label{sub:condition-A}

Let $\mathbf{F}_n = \mathbf{F}_n(\X)$ be defined as in \eqref{eq:sums-of-products}.
In~order to~obtain a matrix function which is smooth in the matrix entries
(as needed for Condition~B), we replace all inverses $(\X^{(q)})^{-1}$
with regularized inverses $(\X^{(q)})_{t}^{-1}$.
We do this in a step~by~step fashion.
Hence, fix $t > 0$, fix an index $Q$ such that $\varepsilon_Q = -1$,
and for~all the other indices $q$ with $\varepsilon_q = -1$,
fix a choice between $(\X^{(q)})^{-1}$ and $(\X^{(q)})_{t}^{-1}$.
Then the resulting matrix $\F_n$ may be represented as
\begin{align}
\label{eq:AXBC}
\mathbf{F}_{n} = \A_n (\X_n)^{-1} \B_n + \C_n \,,
\end{align}
where $\X_n \equiv \X_n^{(Q)}$ (we omit the index $Q$ for simplicity).

\pagebreak[2]

Fix $\alpha \in \mathbb{C}$, and for $0 \leq u \leq t$, let
\begin{align}
\label{eq:F-nu}
\mathbf{F}_{n,u} = \A_n (\X_{n})^{-1}_{u} \B_n + \C_n := \A_n (\X_n^* \X_n + u)^{-1} \X_n^* \B_n + \C_n \,,
\end{align}
and
\begin{align}
\label{eq:s-nu}
s_{n,u}(z) := \tfrac1n \trace \big( (\mathbf{F}_{n,u} - \alpha \I_n) (\mathbf{F}_{n,u} - \alpha \I_n)^* - z \big)^{-1} \,.
\end{align}

\pagebreak[2]

\noindent{}Note that $\F_{n,0}$ coincides with $\F_n$ if $\X_n$ is invertible.
Then, by way of induction, it will suffice to prove the following lemma:

\begin{lemma}
\label{lemma:A}
For each $n \in \mynat$, let $\X_n = (\tfrac{1}{\sqrt{n}} X_{jk})_{jk=1,\hdots,n}$
be a matrix as~in~\eqref{eq:girko-ginibre},
where the entries $X_{jk}$ are independent random variables 
satisfying \eqref{eq:real-moments} -- \eqref{eq:uniform-integrability}.
Furthermore, for each $n \in \mynat$, let $\A_n$, $\B_n$ and $\C_n$ be random matrices 
of dimension $n \times n$ such that the singular value distributions of the random matrices 
$\B_n$ and $\C_n$ \linebreak[2] converge weakly in probability to~(non-random) probability measures on $(0,\infty)$,
and let $\mathbf{F}_{n,u}$ and $s_{n,u}(z)$ be defined as in \eqref{eq:F-nu} and \eqref{eq:s-nu}.
Then, for any $z = u + iv \in \mathbb{C}^+$, we have
\begin{align} 
\label{eq:reg-0}
\lim_{t \to 0} \limsup_{n \to \infty} | s_{n,t}(z) - s_{n,0}(z) | = 0 \quad \text{in probability} \,.
\end{align}
\end{lemma}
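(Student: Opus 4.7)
The starting point is an explicit identity for the difference. On the event that $\X_n$ is invertible (probability $1+o(1)$ by Lemma \ref{lemma:C1}), one checks $(\X_n)^{-1} - (\X_n)_t^{-1} = t\,(\X_n^*\X_n + t)^{-1} \X_n^{-1}$, and using the SVD $\X_n = U\Sigma V^*$ this becomes $\F_{n,t} - \F_{n,0} = -\A_n V D_t U^* \B_n$ with $D_t := \mathrm{diag}\big(t/(\sigma_k(\sigma_k^2+t))\big)$. The entries satisfy $(D_t)_{kk} \le t/\sigma_k^3$ when $\sigma_k \ge \delta$ and $(D_t)_{kk} \le 1/\sigma_k$ in general, so the delicate contributions are concentrated near the small singular values of $\X_n$.

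Motivated by this, I would split the difference at a threshold $\delta > 0$: write $\F_{n,t} - \F_{n,0} = \Delta_1 + \Delta_2$, where $\Delta_1$ (resp.\ $\Delta_2$) corresponds to the diagonal entries of $D_t$ with $\sigma_k(\X_n) \ge \delta$ (resp.\ $\sigma_k(\X_n) < \delta$). Then $\opnorm{\Delta_1} \le \opnorm{\A_n}\opnorm{\B_n}\,t/\delta^3$ while $\rank \Delta_2 \le N_\delta := \#\{k : \sigma_k(\X_n) < \delta\}$. This enables a two-step comparison via the intermediate matrix $\F_{n,0} + \Delta_1$: first, $\F_{n,t}$ and $\F_{n,0}+\Delta_1$ differ by the low-rank piece $\Delta_2$, so eigenvalue interlacing bounds the difference of their Stieltjes transforms by $O(N_\delta/(nv))$; second, $\F_{n,0}+\Delta_1$ and $\F_{n,0}$ differ by $\Delta_1$ which is small in operator norm, so Weyl's inequality together with a direct resolvent expansion bounds the difference of their Stieltjes transforms by $O\big(\opnorm{\A_n}\opnorm{\B_n}(t/\delta^3) v^{-2} M_n\big)$, with $M_n := \tfrac1n\sum_k s_k(\F_{n,0}-\alpha\I_n)$.

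To finish in probability, three inputs are combined. First, by the Marchenko--Pastur law (Lemma \ref{lemma:C1} applied to $\X_n$), $\limsup_n \pp(N_\delta/n > \eta) \to 0$ as $\delta \to 0$ for any $\eta > 0$. Second, the weak convergence of the singular value distributions of $\B_n$ and $\C_n$ (supplemented in the application inside the induction of Section \ref{sec:limit-theorem} by analogous control on $\A_n$ coming from its construction out of Girko--Ginibre matrices and already-regularised inverses) lets one truncate the operator norms of $\A_n, \B_n, \C_n$ at a level $K$ by excising a vanishing-rank piece, which is absorbed into another rank-perturbation estimate; after such truncation, $\opnorm{\A_n}\opnorm{\B_n}$ and $M_n$ are deterministically bounded by constants depending only on $K$. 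Third, the limits are then taken in the order $t \to 0$ (killing the operator-norm contribution for fixed $\delta, K$), $\delta \to 0$ (killing $N_\delta/n$), and $K \to \infty$ (letting the truncation error vanish), yielding \eqref{eq:reg-0}. The main obstacle is orchestrating these three limits under probabilistic rather than uniform deterministic bounds on the auxiliary matrices; this is handled by working on a nested sequence of high-probability events and respecting the indicated ordering of limits.
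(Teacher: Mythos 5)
Your route is genuinely different from the paper's: you split the perturbation $\F_{n,0}-\F_{n,t}=\A_n V D_t U^*\B_n$ according to a threshold $\delta$ on the singular values of $\X_n$, treating the small-singular-value block by a rank inequality and the rest by an operator-norm/resolvent perturbation, whereas the paper writes $\R_t-\R_0=-\int_0^t \R_u\,\tfrac{d}{du}(\F_u\F_u^*)\,\R_u\,du$, algebraically rewrites the integrand so that everything is expressed through $\F_u$, $\B^{\pm1}$ and $\C$, and reduces the whole estimate to $\int_0^t \tfrac1n\trace(\X\X^*+u\I)^{-1}du\to0$. The decomposition idea is sound, but as written there are two concrete problems.

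First, your argument needs control on $\A_n$ (truncation of $\opnorm{\A_n}$ at level $K$ modulo a low-rank piece), and the lemma deliberately assumes \emph{nothing} about $\A_n$; Remark \ref{remark:no-independence} and the induction in Section \ref{sub:main-proof} rely on this generality. You acknowledge the extra hypothesis and defer to the application, but that changes the statement being proved. The paper avoids it by inserting $\B\B^{-1}$ so that $\A\X_u^{-1}(\cdots)=(\F_u-\C)\B^{-1}(\cdots)$, after which $\A$ enters only through $\F_u$, whose contribution is absorbed by the resolvent bounds $\|\R_u\F_u\|\le(v^{-1}(1+|z|v^{-1}))^{1/2}$. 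The same device repairs your step: write $\Delta_1=\A V D_t^{(1)}U^*\B=(\F_{n,0}-\C)\B^{-1}\,U\Sigma D_t^{(1)}U^*\,\B$ and note $\|\Sigma D_t^{(1)}\|\le t/\delta^2$, so only $\|\B\|,\|\B^{-1}\|,\|\C\|$ (controllable by your truncation, as in the paper's \eqref{eq:reg-11}--\eqref{eq:reg-12}) and the resolvent bounds are needed. Second, your operator-norm step is stated with the factor $M_n=\tfrac1n\sum_k s_k(\F_{n,0}-\alpha\I_n)$, and this quantity is \emph{not} bounded in probability: even after truncating $\A_n,\B_n$, it is dominated by $\tfrac1n\sum_k s_k^{-1}(\X_n)$, which diverges like $\log n$ (the limiting density of the squared singular values of $\X_n$ behaves like $x^{-1/2}$ at the hard edge, so $\int x^{-1/2}\,d\gamma(x)=\infty$). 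Since the limit $n\to\infty$ is taken before $t\to0$, the bound $O(t\,\delta^{-3}v^{-2}M_n)$ does not close. Replace the trace of $|\F_{n,0}|$ by the uniform resolvent bound $\|\F_{n,0}^*\R\|\le(v^{-1}(1+|z|v^{-1}))^{1/2}$ (as in \eqref{eq:reg-22}) and the step goes through with a bound of order $C(K,z)\,t/\delta^2$. A minor point: the fact that $N_\delta/n$ is small follows from the Marchenko--Pastur law (Section \ref{sec:RMT}), not from Lemma \ref{lemma:C1}, which only controls the single smallest singular value.
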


\begin{remark}
\label{remark:no-independence}
Let us emphasize that although the matrices $\A_n$, $\B_n$, $\C_n$ and $\X_n$ in the decomposition \eqref{eq:AXBC} are independent,
this is not required in Lemma \ref{lemma:A}. 
\end{remark}

\begin{remark}
\label{remark:reg}
Lemma 8.16 in \cite{GKT:2014} contains a similar result for the case where \linebreak $\C_n = 0$.
This result is based on the additional assumption 
that the matrices $\B_n$ satisfy Condition C,
but as we shall see below, this assumption is not needed.
The main difference in the proof of Lemma \ref{lemma:A} 
(as compared to that of Lemma 8.16 in~\cite{GKT:2014}) 
is that we control the necessary \emph{auxiliary} modifications 
of the matrices $\B_n$ and $\C_n$
via the matrix rank, and not via the resolvent.
\end{remark}

\begin{proof}[Proof of Lemma \ref{lemma:A}]
For the sake of simplicity, we consider only the case $\alpha = 0$ here,
the extension to the case $\alpha \ne 0$ being straightforward.
We have to show that for any given $\varepsilon > 0$ and $\delta > 0$,
\begin{align} 
\label{eq:reg-1}
\limsup_{t \to 0} \limsup_{n \to \infty} \pp \big( | s_{n,t}(z) - s_{n,0}(z) | > \varepsilon \big) < \delta  \,.
\end{align}
Hence, fix $\varepsilon > 0$ and $\delta > 0$.
Similarly as in the proof of Lemma 8.16 in \cite{GKT:2014},
we~introduce \emph{auxiliary} modifications
of the matrices $\B_n$ and $\C_n$ before we do the~regularization
of the inverse matrices $\X_n^{-1}$.

\pagebreak[2]

For an $n \times n$ matrix $\M$,
let $s_1(\M) \geq \cdots \geq s_n(\M)$ denote the singular values.
Since the singular value distributions of $\B_n$ and $\C_n$
converge weakly in probability to (non-random) probability measures
on $(0,\infty)$, we may find $K > 1$ and $N \in \mynat$ such that
for $n \geq N$, we have
$$
\pp \left( \tfrac1n \sum_{k=1}^{n} \pmb{1}_{\{ s_k(\B_n) < K^{-1} \ \vee \ s_k(\B_n) > K \}} > \tfrac{\varepsilon v}{24} 
	  \ \vee \ \tfrac1n \sum_{k=1}^{n} \pmb{1}_{\{ s_k(\C_n) > K \}} > \tfrac{\varepsilon v}{24} \right) < \tfrac{\delta}{2} \,.
$$
Then, the modifications $\widetilde\B_n$ and $\widetilde\C_n$ are defined as follows:
For the matrix $\C_n$, take the singular value decomposition $\C_n = \U \DDelta \V^*$,
let $\widetilde\DDelta$ be the diagonal matrix obtained from $\DDelta$
by replacing the diagonal elements $\DDelta_{kk}$ with $\widetilde\DDelta_{kk} := \DDelta_{kk} \wedge K$,
and set $\widetilde{\C}_n := \U \widetilde\DDelta \V^*$. \pagebreak[2]
For the matrix $\B_n$, take the singular value decomposition $\B_n = \U \DDelta \V^*$,
let $\widetilde\DDelta$ be the diagonal matrix obtained from $\DDelta$
by replacing the diagonal elements $\DDelta_{kk}$ with $\widetilde\DDelta_{kk} := (\DDelta_{kk} \wedge K) \vee K^{-1}$,
and set $\widetilde{\B}_n := \U \widetilde\DDelta \V^*$.
Then we have 
\begin{align}
\label{eq:reg-11}
\| \widetilde\B_n \| \leq K \,,
\quad
\| \widetilde\B_n^{-1} \| \leq K \,,
\quad
\| \widetilde\C_n \| \leq K \,,
\end{align}
and for $n \geq N$, with a probability of at least $1 - \tfrac{\delta}{2}$, we also have
\begin{align}
\label{eq:reg-12}
\tfrac1n \rank(\B_n - \widetilde\B_n) \leq \tfrac{\varepsilon v}{24} \,,
\quad
\tfrac1n \rank(\C_n - \widetilde\C_n) \leq \tfrac{\varepsilon v}{24} \,.
\end{align}
Furthermore, let $\widetilde{F}_{n,u}$ and $\widetilde{s}_{n,u}(z)$ 
be defined as in \eqref{eq:F-nu} and \eqref{eq:s-nu},
but with $\B_n$ and $\C_n$ replaced by $\widetilde{\B}_n$ and $\widetilde{\C}_n$.
It then follows from \eqref{eq:reg-12} that for $n \geq N$,
with a~probability of at least $1 - \tfrac{\delta}{2}$,
we have
$$
\tfrac1n \rank(\F_{n,u} \F_{n,u}^* - \widetilde\F_{n,u} \widetilde\F_{n,u}^*) \leq \tfrac{\varepsilon v}{6}
$$
and therefore, by the rank inequality,
$$
|s_{n,u}(z) - \widetilde{s}_{n,u}(z)| \leq \tfrac{\varepsilon}{3} \,.
$$

Thus, we have reduced the proof of \eqref{eq:reg-1} to showing that
\begin{align} 
\label{eq:reg-2}
\lim_{t \to 0} \limsup_{n \to \infty} | \widetilde{s}_{n,t}(z) - \widetilde{s}_{n,0}(z) | = 0 \quad \text{in probability} \,.
\end{align}
Since we only deal with the modified matrices for the rest of the proof, 
we~omit the tildes and write $\B_n,\C_n,\F_{n,u}$ and $s_{n,u}(z)$ 
instead of $\widetilde\B_n,\widetilde\C_n,\widetilde\F_{n,u}$ and $\widetilde s_{n,u}(z)$,
respectively. Moreover, for brevity, we usually omit the index $n$.

To establish \eqref{eq:reg-2}, we may proceed similarly as in the proof of Lemma 8.16 in~\cite{GKT:2014}.
Set $\R_{u} := (\F_{u}^{} \F_{u}^* - z \I)^{-1}$, $0 \leq u \leq t$, 
and note that we have the estimates
\begin{multline}
\label{eq:reg-22}
\|\R_u\|_2 \leq v^{-1} \,,\
\|\F_u^* \R_u \F_u\|_2 \leq 1 + |z|v^{-1} \,, \\
\| \R_u \F_u \| \leq \left( v^{-1}(1+|z|v^{-1}) \right)^{1/2} \,,
\| \F_u^* \R_u \| \leq \left( v^{-1}(1+|z|v^{-1}) \right)^{1/2}
\end{multline}
as well as the representation
\begin{align}
  \R_t - \R_0
= \int_0^t \frac{d\R_u}{du} \, du 
= - \int_0^t \R_u \frac{d(\F_u^{} \F_u^*)}{du} \R_u \, du \,.
\label{eq:reg-3}
\end{align}
It is easy to check that
\begin{align*}
   \frac{d(\F_u \F_u^*)}{du}
&= \A \X^{-1}_u (\X \X^* + u\I)^{-1} \B \F_u^* + \F_u^{} \B^* (\X \X^* + u\I)^{-1} (\X^{-1}_u)^* \A^* \\
&= \A \X^{-1}_u \B \B^{-1} (\X \X^* + u\I)^{-1} \B \F_u^* \\ 
	&\qquad \,+\, \F_u^{} \B^* (\X \X^* + u\I)^{-1} (\B^*)^{-1}  \B^* (\X^{-1}_u)^* \A^* \\
&= \F_u \B^{-1} (\X \X^* + u\I)^{-1} \B \F_u^* - \C \B^{-1} (\X \X^* + u\I)^{-1} \B \F_u^* \\
	&\qquad \,+\, \F_u^{} \B^* (\X \X^* + u\I)^{-1} (\B^*)^{-1} \F_u^* - \F_u^{} \B^* (\X \X^* + u\I)^{-1} (\B^*)^{-1} \C^* \,.
\end{align*}
Thus, it follows from \eqref{eq:reg-3} that
\begin{align*}
     |\tfrac1n \trace (\R_t - \R_0)|
\leq& \int_0^t |\tfrac1n \trace ( \R_u \F_u \B^{-1} (\X \X^* + u\I)^{-1} \B \F_u^* \R_u)| \, du \nonumber\\
	&+ \int_0^t |\tfrac1n \trace ( \R_u \C \B^{-1} (\X \X^* + u\I)^{-1} \B \F_u^* \R_u)| \, du \nonumber\\
	&+ \int_0^t |\tfrac1n \trace ( \R_u \F_u^{} \B^* (\X \X^* + u\I)^{-1} (\B^*)^{-1} \F_u^* \R_u)| \, du \nonumber\\
	&+ \int_0^t |\tfrac1n \trace ( \R_u \F_u^{} \B^* (\X \X^* + u\I)^{-1} (\B^*)^{-1} \C^* \R_u)| \, du \,.
\end{align*}
Using the inequality $|\trace(\M_1 \M_2 \M_3)| \leq \| \M_1 \| \| \M_3 \| \trace (\M_2)$
(which holds for~any $n \times n$ matrices $\M_1$, $\M_2$, $\M_3$ such that $\M_2$ is positive definite)
as~well~as \eqref{eq:reg-11} and \eqref{eq:reg-22}, we therefore obtain
\begin{align}
\label{eq:reg-5}
     \left| \tfrac1n \trace (\R_t - \R ) \right|
\leq C(K,z) \int_0^t \tfrac1n \trace (\X \X^* + u\I)^{-1} \, du \,,
\end{align}
where $C(K,z)$ is some constant depending only on $K$ and $z$.
Thus, it remains to~show that
\begin{align}
\label{eq:reg-6}
\lim_{t \to 0} \limsup_{n \to \infty} \int_0^t \tfrac1n \trace (\X_n^{} \X_n^* + u\I_n)^{-1} \, du = 0 \quad\text{in probability} \,.
\end{align}
But this follows from the fact that the random matrices $\X_n$ satisfy Condition~C;
\linebreak see the proof of Lemma 8.14 in \cite{GKT:2014} for details.
\end{proof}

\pagebreak[2]

\subsection{On Condition B}
\label{sub:condition-B}

Here we prove the following lemma:

\begin{lemma}
\label{lemma:B}
With $\mathbf{F}_n$ defined as in Equation \eqref{eq:sums-of-products},
Assumption~B holds.
\end{lemma}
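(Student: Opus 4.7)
The plan is to verify each of the bounds (B0), (B1), (B2) (and their counterparts for $\widehat{g}_{jk}^{(q)}(\theta)$) by direct computation of the relevant partial derivatives, taking advantage of three fundamental facts: after regularization with parameter $t>0$, the matrix function $\F_{n,t}$ depends smoothly on every matrix entry $Z_{jk}^{(q)}$; each factor $(\Z)_t^{-1} = (\Z^*\Z + t\I)^{-1}\Z^*$ has the deterministic operator-norm bound $\|(\Z)_t^{-1}\|_{\mathrm{op}} \le 1/(2\sqrt{t})$ (by the functional calculus applied to $\sigma \mapsto \sigma/(\sigma^2+t)$); and the resolvent satisfies $\|(\V_n(\theta\varphi) - z\I_{2n})^{-k}\|_{\mathrm{op}} \le v^{-k}$ where $v = \im z > 0$.

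First, I would establish the base bound (B0). Since $Z_{jk}^{(q)}$ occurs in exactly one of the $m$ summands of $\F_{n,t}$ and, within that summand, in exactly one factor of the product of length $l$, the derivative $\partial \F_{n,t}/\partial Z_{jk}^{(q)}$ can be written as a sum of boundedly many terms of the form
\[
\P_1 \cdot \tfrac{1}{\sqrt n}\, \e_{j_*} \e_{k_*}^T \cdot \P_2 ,
\]
where $\P_1, \P_2$ are products of regularized inverse factors and direct factors $\Z^{(q')}$. Hence the rank of $\partial \V_n/\partial \re Z_{jk}^{(q)}$ is bounded by a constant depending only on $l$, and its operator norm by $\frac{1}{\sqrt n}\,C(t,l)\prod_{q' \in I}\|\Z^{(q')}\|_{\mathrm{op}}$, where $I$ collects the direct-factor indices. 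Combining this with the trace inequality $|\trace(AB)| \le \rank(A)\|A\|_{\mathrm{op}}\|B\|_{\mathrm{op}}$ and the resolvent bound above yields
\[
|g_{jk}^{(q)}(\theta)| \le \frac{C(t, l, v)}{\sqrt{n}} \prod_{q' \in I} \|\Z^{(q')}\|_{\mathrm{op}}.
\]
The first- and second-order partial derivatives (B1)--(B2) are handled by the same scheme: by the Leibniz rule each additional differentiation produces a bounded number (depending only on $l,m$) of terms of precisely this type, with slightly larger rank and with a few more factors of $\R(\theta)$ or of the regularized inverse, all controlled by (i)--(iii). Uniformity in $\theta \in [0,1]$ is automatic because rescaling a \emph{single} entry $Z_{jk}^{(q)}$ by $\theta$ does not change the rank structure and enlarges operator norms by at most a factor of $1$.

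To pass to conditional expectations and conclude (B0)--(B2), I would use the moment bounds on operator norms of truncated Girko--Ginibre matrices collected in Section~\ref{sec:auxiliary-results}: after truncation at level $\tau_n\sqrt n$ the matrices $\Z_n^{(q')}(\varphi)$ have $\sup_n \E \|\Z_n^{(q')}(\varphi)\|_{\mathrm{op}}^{p} < \infty$ for all $p \in \mathbb{N}$. Conditioning on the single pair $(X_{jk}^{(q)},Y_{jk}^{(q)})$ integrates out the independent entries appearing in the factors $\prod_{q' \in I}\|\Z^{(q')}\|_{\mathrm{op}}$, and the contribution of the singled-out entry is at most of order $(1/\sqrt n)\cdot\tau_n\sqrt n = \tau_n = O(1)$ in the essential supremum over its value, so the final bounds depend only on $t, z, l, m$, as required. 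The main obstacle will be the combinatorial bookkeeping for the second derivatives applied to the regularized inverse factors, where the identity
\[
\partial_{jk}\!\bigl((\Z^*\Z + t\I)^{-1}\Z^*\bigr) = -(\Z^*\Z+t\I)^{-1}\partial_{jk}(\Z^*\Z)(\Z^*\Z+t\I)^{-1}\Z^* + (\Z^*\Z+t\I)^{-1}\partial_{jk}\Z^*
\]
generates a sizeable (but $n$-independent) number of terms, each of which must be verified to fit the rank/operator-norm template above; the remaining estimates then follow mechanically.
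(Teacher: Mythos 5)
Your overall template is the right one --- the derivative of each regularized factor is a finite-rank perturbation, the resolvent contributes $v^{-k}$, and the regularized inverses contribute powers of $t^{-1/2}$ --- but there is a genuine gap at the step where you take conditional expectations. You bound $|g_{jk}^{(q)}(\theta)|$ by $\tfrac{C(t,l,v)}{\sqrt n}\prod_{q'\in I}\|\Z^{(q')}\|_{\mathrm{op}}$ and then invoke ``moment bounds on operator norms of truncated Girko--Ginibre matrices collected in Section~\ref{sec:auxiliary-results}''. No such bounds appear there (that section contains only Horn-type inequalities and bounds on \emph{small} singular values), and in fact $\sup_n\E\|\Z_n^{(q')}(\varphi)\|_{\mathrm{op}}^p<\infty$ is not available under the standing hypotheses: the entries are only assumed uniformly square-integrable, no fourth moments exist, and after truncation at level $\tau_n\sqrt n$ the largest singular value of $\X_n^{(q')}$ is in general \emph{not} bounded in probability (this is precisely why Condition $(C0)$ is formulated in terms of $\tfrac1n\sum_k s_k^p$ rather than $s_1$, and why Bai--Yin-type norm bounds require fourth moments). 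So the expectation $\E\prod_{q'\in I}\|\Z^{(q')}\|_{\mathrm{op}}$ that your argument needs may be infinite, and the whole passage to $(B0)$--$(B2)$ collapses.

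The paper's proof circumvents exactly this obstacle. Writing $\partial(\Z_n^{(q)})_t^{\varepsilon_q}/\partial Z_{jk}^{(q)}=\sum_i\v_i\widetilde\v_i^*$ with $\|\v_i\|_2,\|\widetilde\v_i\|_2\le t^{-1/2}$, one never forms operator norms of the remaining factors; instead $g_{jk}^{(q)}$ is a finite sum of scalars $\widetilde\v_i^*(\widetilde\H_n^{(q)})^*[\R^2]_{21}\H_n^{(q)}\v_i$, bounded by $Cv^{-2}\sum_i\|\H_n^{(q)}\v_i\|_2\,\|\widetilde\H_n^{(q)}\widetilde\v_i\|_2$, i.e.\@ by norms of the random products \emph{applied to specific vectors}. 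These are controlled by peeling off one independent factor at a time via
\begin{align*}
\E\|\Z_n\v_n\|_2^{2r}\le C_r\|\v_n\|_2^{2r}
\qquad\text{and}\qquad
\E\|(\Z_n)_t^{-1}\v_n\|_2^{2r}\le C_r\|\v_n\|_2^{2r}/t^r
\end{align*}
for \emph{deterministic} $\v_n$ (Equations \eqref{eq:directproduct}--\eqref{eq:inverseproduct}); the first follows from a Rosenthal-type inequality for the truncated entries and requires no norm control whatsoever, and the conditional independence given $X_{jk}^{(q)},Y_{jk}^{(q)}$ lets one iterate with Cauchy--Schwarz/H\"older. Your rank-and-norm bookkeeping for the higher derivatives is fine and matches the paper's, but you should replace the operator-norm step by this vector-wise moment argument for the proof to go through under the stated hypotheses.
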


\begin{proof}
For simplicity, we consider the case of real matrices only,
and we often omit the~arguments $\varphi$ and $\theta$ in our estimates. 
Furthermore, for brevity, we will assume that $t \in (0,1)$ and $v := \im z \in (0,1)$.
(Indeed, it would be sufficient to deal with parameters from these regions.)
Finally, for reasons of symmetry, we consider only the case where $q \in \{ 1,\hdots,l \}$.

Putting
$$
(\Z_n^{(r)})_t^{\varepsilon_{r}} := \Z_n^{(r)} \quad \text{for $\varepsilon_r = +1$}
$$
and 
$$
(\Z_n^{(r)})_t^{\varepsilon_{r}} := \A_{n,t}^{(r)} (\Z_n^{(r)})^* := ((\Z_n^{(r)})^* \Z_n^{(r)} + t \I)^{-1} (\Z_n^{(r)})^* \quad \text{for $\varepsilon_r = -1$}
$$
($r \in \mynat$), we have the representation
\begin{align}
\label{eq:B11}
\mathbf{F}_{n,t} = \mathbf{F}_{n,t}(\Z_n^{(1)},\hdots,\Z_n^{(ml)}) = \sum_{p=1}^{m} \prod_{r=1}^{l} (\Z_n^{((p-1)l+r)})_t^{\varepsilon_{r}} \,.
\end{align}
Setting $\H_n^{(q)} := \prod_{r=1}^{q-1} (\Z_n^{(r)})_t^{\varepsilon_{r}}$
and $\widetilde\H_n^{(q)} := \big(\prod_{r=q+1}^{l} (\Z_n^{(r)})_t^{\varepsilon_{r}}\big)^*$,
it follows that
\begin{align}
\label{eq:B12}
\frac{\partial \mathbf{F}_{n,t}}{\partial Z_{jk}^{(q)}} = \H_n^{(q)} \, \frac{\partial (\Z_n^{(q)})_t^{\varepsilon_q}}{\partial Z_{jk}^{(q)}} \, \big(\widetilde\H_n^{(q)}\big)^* \,,
\end{align}
where
$$
\frac{\partial (\Z_n^{(q)})_t^{\varepsilon_q}}{\partial Z_{jk}^{(q)}} = \mathbf{e}_j \mathbf{e}_k^T \quad \text{for $\varepsilon_q = +1$}
$$
and
$$
\frac{\partial (\Z_n^{(q)})_t^{\varepsilon_q}}{\partial Z_{jk}^{(q)}} = 
- \A_{n,t}^{(q)} \Big( \mathbf{e}_k \mathbf{e}_j^T \Z_n^{(q)} + (\Z_n^{(q)})^* \mathbf{e}_j \mathbf{e}_k^T \Big) \A_{n,t}^{(q)} (\Z_n^{(q)})^*
+ \A_{n,t}^{(q)} \mathbf{e}_k \mathbf{e}_j^T
\quad \text{for $\varepsilon_q = -1$} \,.
$$
Thus, in both cases, we have a representation of the form
\begin{align}
\label{eq:finiterank}
\frac{\partial (\Z_n^{(q)})_t^{\varepsilon_q}}{\partial Z_{jk}^{(q)}}
=
\v_1 \widetilde\v_1^* + \hdots + \v_s \widetilde\v_s^* \,,
\end{align}
where $s \in \{ 1,3 \}$ and the $\v_i$ and the $\widetilde\v_i$ are vectors of Euclidean norm bounded by~$t^{-1/2}$.
To see this, recall that $0 < t < 1$ and use the relations
\begin{multline}
\label{eq:matrixnorms}
\|\A_{n,t}^{(q)}\|_2 \leq t^{-1} \,,\
\|\Z_n^{(q)} \A_{n,t}^{(q)} (\Z_n^{(q)})^*\|_2 \leq 1 \,,\ \\
\|\A_{n,t}^{(q)} (\Z_n^{(q)})^*\|_2 \leq t^{-1/2} \,,\
\|\Z_n^{(q)} \A_{n,t}^{(q)}\|_2 \leq t^{-1/2} \,.
\end{multline}
Let $\V$ denote the Hermitization of the matrix $\F := \F_{n,t}(\varphi)-\alpha\I_n$, i.e.\@
$$
\V  = \begin{bmatrix}\mathbf O& \F \\\,\F^* & \mathbf O\end{bmatrix}
    = \begin{bmatrix}\mathbf O& \F_{n,t}(\varphi) - \alpha \I_n \\\,\F_{n,t}^*(\varphi) - \overline\alpha \I_n & \mathbf O\end{bmatrix} \,,
$$
and for fixed $z = u + iv \in \mathbb{C}^+$ with $v \in (0,1)$,
let $\mathbf R := (\V - z \I_{2n})^{-1}$ denote the corresponding resolvent matrix.
Furthermore, given a matrix $\M$ of dimension $2n \times 2n$, \pagebreak[1]
we denote the submatrices of dimension $n \times n$ by $[\mathbf M]_{\alpha\beta}$,
$\alpha,\beta=1,2$, so~that
$$
\mathbf M = \begin{bmatrix}\, [\mathbf M]_{11} & [\mathbf M]_{12} \,\\\, [\mathbf M]_{21} & [\mathbf M]_{22} \,\end{bmatrix} \,.
$$
Then $g_{jk}^{(q)}$ is a finite sum of scalar products of the form
\begin{align}
\label{eq:B010}
\widetilde{\v}_i^* (\widetilde\H_n^{(q)})^* [\mathbf R^2]_{21} \H_n^{(q)} \v_i \quad\text{and}\quad \v_i^* (\H_n^{(q)})^* [\mathbf n^2]_{12} \widetilde\H_n^{(q)} \widetilde\v_i
\end{align}
with $\v_i,\widetilde\v_i$ as in Equation \eqref{eq:finiterank}.
Since~$\| \mathbf R \| \leq v^{-1}$, it follows that
\begin{align}
\label{eq:B020}
     |g_{jk}^{(q)}(\theta)|
\leq C v^{-2} \sum_i \| \H_n^{(q)} \v_i \|_2 \| \widetilde\H_n^{(q)} \widetilde\v_i \|_2 \,.
\end{align}
Let $\Z_n$ be a random matrix of the same form as the matrices $\Z_n^{(r)}$.
Then it is easy to see that for any $r \in \mynat$, there exists a constant $C_r \in (0,\infty)$ such~that
for any $n \in \mynat$ and any \emph{deterministic} vector $\v_n \in \myreal^n$, we have 
\begin{align}
\label{eq:directproduct}
\ee\| \Z_n \v_n \|_{2}^{2r} \leq C_r \| \v_n \|_{2}^{2r}
\end{align}
as well as 
\begin{align}
\label{eq:inverseproduct}
\ee\| (\Z_n)_t^{-1} \v_n \|_{2}^{2r} \leq C_r \| \v_n \|_{2}^{2r} / t^r \,.
\end{align}
In fact, \eqref{eq:directproduct} can be proved using our moment assumptions and independence,
while \eqref{eq:inverseproduct} is an immediate consequence of the fact that $\| (\Z_n)_t^{-1} \| \le t^{-1/2}$.

Using Cauchy--Schwarz inequality, independence as well as \eqref{eq:directproduct} and \eqref{eq:inverseproduct}
(applied conditionally on $X_{jk}^{(q)},Y_{jk}^{(q)}$),
we~obtain
\begin{align}
\mskip+11mu&\mskip-11mu\left| \condexp\g \right| \nonumber\\
&\leq Cv^{-2} \sum_i \Big( \ee \Big\{ \|           \H_n^{(q)}           \v_i \|_2^2 \,\Big|\, X_{jk}^{(q)},Y_{jk}^{(q)} \Big\} \Big)^{1/2} 
                     \Big( \ee \Big\{ \| \widetilde\H_n^{(q)} \widetilde\v_i \|_2^2 \,\Big|\, X_{jk}^{(q)},Y_{jk}^{(q)} \Big\} \Big)^{1/2} \qquad \nonumber\\ 
&\leq Cv^{-2} t^{-(l-1)/2} \sum_i \Big( \ee \Big\{ \|           \v_i \|_2^2 \,\Big|\, X_{jk}^{(q)},Y_{jk}^{(q)} \Big\} \Big)^{1/2} 
                                  \Big( \ee \Big\{ \| \widetilde\v_i \|_2^2 \,\Big|\, X_{jk}^{(q)},Y_{jk}^{(q)} \Big\} \Big)^{1/2} \qquad \nonumber\\ 
&\leq Cv^{-2} t^{-(l+1)/2} \,,
\label{eq:B030}
\end{align}
and Condition \eqref{B0} is proved.

\pagebreak[2]

Furthermore, using that
$$
  \frac{\partial \mathbf R}{\partial Z^{(q)}_{jk}} 
= - \mathbf R \frac{\partial\V}{\partial Z^{(q)}_{jk}} \mathbf R \,,
$$
it is easy to see that
$$
  \frac{\partial g_{jk}^{(q)}}{\partial Z_{jk}^{(q)}}
= \trace \bigg( {-} 2 \frac{\partial \V}{\partial Z_{jk}^{(q)}} \mathbf R^2 \frac{\partial \V}{\partial Z_{jk}^{(q)}} \mathbf R  + \frac{\partial^2 \V}{\partial (Z_{jk}^{(q)})^2} \, \mathbf R^2 \bigg)
$$
and
\begin{multline*}
  \frac{\partial^2 g_{jk}^{(q)}}{\partial (Z_{jk}^{(q)})^2}
= \trace \bigg( 6 \frac{\partial \V}{\partial Z_{jk}^{(q)}} \mathbf R^2 \frac{\partial \V}{\partial Z_{jk}^{(q)}} \mathbf R \frac{\partial \V}{\partial Z_{jk}^{(q)}} \mathbf R \\
	- 3 \frac{\partial^2 \V}{\partial (Z_{jk}^{(q)})^2} \, \mathbf R^2 \frac{\partial \V}{\partial Z_{jk}^{(q)}} \mathbf R 
	- 3 \frac{\partial^2 \V}{\partial (Z_{jk}^{(q)})^2} \, \mathbf R \frac{\partial \V}{\partial Z_{jk}^{(q)}} \mathbf R^2 
	+ \frac{\partial^3 \V}{\partial (Z_{jk}^{(q)})^3} \, \mathbf R^2 \bigg) \,,
\end{multline*}
where, for $\kappa \in \{ 1,2,3 \}$, 
$$
\frac{\partial^\kappa \F_{n,t}}{\partial (Z_{jk}^{(q)})^\kappa} = \H_n^{(q)} \frac{\partial^\kappa (\Z_n^{(q)})_t^{\varepsilon_q}}{\partial (Z_{jk}^{(q)})^\kappa} \big( \widetilde\H_n^{(q)} \big)^*
$$
and
$$
\frac{\partial^\kappa (\Z_n^{(q)})_t^{\varepsilon_q}}{\partial (Z_{jk}^{(q)})^\kappa}
=
\sum_i \v_{i,1} \v_{i,2}^* \cdots \v_{i,2\kappa-1} \v_{i,2\kappa}^*
$$
where the sum is finite (with a number of summands which does not depend on~$n$),
$$
\v_{i,1},\hdots,\v_{i,2\kappa} \in \{ \A_{n,t} \e_k, \Z_n \A_{n,t} \Z_n^* \e_j, \A_{n,t} \Z_n^* \e_j, \Z_n \A_{n,t} \e_k, \A_{n,t} \e_k, \e_j, \e_k \}
$$
and 
$$
\| \v_{i,1} \|_2 \cdots \| \v_{i,2\kappa} \|_2 \leq t^{-(\kappa+1)/2} \,.
$$
Therefore, using H\"older's inequality, independence
as well as \eqref{eq:directproduct} and \eqref{eq:inverseproduct}
and proceeding as in \eqref{eq:B030},
we find that the conditional expectations 
given $X_{jk}^{(q)},Y_{jk}^{(q)}$
of $\partial g_{jk}^{(q)}(\theta) / \partial (Z_{jk}^{(q)})$
and $\partial^2 g_{jk}^{(q)}(\theta) / \partial (Z_{jk}^{(q)})^2$
are bounded by expressions of the form $C v^{-3} t^{-(l+1)}$ and $C v^{-4} t^{-3(l+1)/2}$,
and Conditions \eqref{B1} and \eqref{B2} are proved.
\end{proof}

\subsection{On Condition C}
\label{sub:condition-C}

Here we provide a number of lemmas which will be helpful in verifying Conditions C and \CS.
Recall that Condition \CS was introduced in Remark \ref{rem:C-simple}.

\begin{lemma}
\label{lemma:C-transfer}
For each $n \in \mynat$, let $\F_n$ and $\G_n$ be random matrices of dimension $n \times n$.
If the~matrices $\F_n$ and $\G_n$ satisfy Condition \CS,
then the~matrix~products $\F_n \G_n$ also satisfy Condition \CS.
\end{lemma}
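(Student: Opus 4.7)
The plan is to verify the three defining conditions $(C0)$, $(C1)$ and $(C2)$ (the last two at $\alpha=0$) for the product $\F_n\G_n$ separately, relying on the corresponding conditions for $\F_n$ and $\G_n$. For $(C0)$ I would invoke the Schatten--H\"older inequality $\|\F_n\G_n\|_p \leq \|\F_n\|_{2p}\|\G_n\|_{2p}$, which after dividing by $n$ becomes
\[
\tfrac1n\sum_{j=1}^n s_j^p(\F_n\G_n) \leq \Big(\tfrac1n\sum_{j=1}^n s_j^{2p}(\F_n)\Big)^{1/2}\Big(\tfrac1n\sum_{j=1}^n s_j^{2p}(\G_n)\Big)^{1/2};
\]
since $(C0)$ demands only \emph{some} $p>0$, halving the exponent is harmless. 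For $(C1)$ I would use the standard inequality $s_n(\F_n\G_n)\geq s_n(\F_n)\,s_n(\G_n)$ for square matrices, which translates polynomial lower bounds $s_n(\F_n)\geq n^{-Q_\F}$ and $s_n(\G_n)\geq n^{-Q_\G}$ into the analogous bound for $s_n(\F_n\G_n)$ with exponent $Q_\F+Q_\G$.

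The substantive work is $(C2)$. The key tool is the Horn--Weyl log-majorization
\[
\sum_{j=1}^k \log s_j(\F_n\G_n) \leq \sum_{j=1}^k \log s_j(\F_n) + \sum_{j=1}^k \log s_j(\G_n), \qquad k=1,\dots,n.
\]
For square matrices equality holds at $k=n$ (both sides equal $\log|\det(\F_n\G_n)|$; the factors are invertible with probability $\to 1$ by $(C1)$ for $\F_n,\G_n$), so subtracting yields the \emph{reverse} inequality on the upper tail $j>k$. Splitting $|\log s_j|=2\log^{+}\! s_j - \log s_j$, I would control the $\log^{+}$ part by $(C0)$ for $\F_n\G_n$ (just verified): from $\tfrac1n\sum_j s_j^p(\F_n\G_n)\leq C$ in probability and monotonicity of the $s_j$, one has $s_{n_1}^p(\F_n\G_n)\leq (n/n_1)\,C\leq 2C$ for $n$ large enough that $n_1\geq n/2$ (which is guaranteed by $\delta_n\to 0$). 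Hence $\log^{+}\! s_j(\F_n\G_n)$ is bounded by a constant on $\{j\geq n_1\}$, and $\tfrac1n\sum_{n_1\leq j\leq n_2}\log^{+}\! s_j(\F_n\G_n)\leq \delta_n\cdot\mathrm{const}\to 0$.

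For the $-\!\log s_j$ part I would write
\[
-\!\sum_{j=n_1}^{n_2}\log s_j(\F_n\G_n) \;=\; -\!\sum_{j=n_1}^n \log s_j(\F_n\G_n) \;+\; \sum_{j=n_2+1}^n \log s_j(\F_n\G_n),
\]
bound the first summand from above, via the Horn--Weyl tail inequality with $k=n_1-1$, by $\sum_{j=n_1}^n |\log s_j(\F_n)| + \sum_{j=n_1}^n |\log s_j(\G_n)|$, and the second by $(n-n_2)\,\log^{+}\!s_{n_2+1}(\F_n\G_n)\leq n^\gamma\cdot\mathrm{const}$. Each single-factor sum decomposes into the range $[n_1,n_2]$ (which is $o(n)$ by $(C2)$ for that factor) and the range $(n_2,n]$ containing at most $n^\gamma$ indices with each $|\log s_j|$ of order $\log n$ (upper bound from $(C0)$, lower bound from $(C1)$ for the factor), contributing $O(n^{\gamma-1}\log n)=o(1)$ after division by $n$.

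The main obstacle is handling this ``corner'' range $n_2<j\leq n$: since $(C2)$ only controls the middle block $[n_1,n_2]$ of singular values, the residual tail of $\leq n^\gamma$ indices must be dominated by polylogarithmic per-entry bounds coming from $(C0)$ and $(C1)$, and the freedom $\gamma<1$ in $(C2)$ is precisely what makes this tail negligible after the $1/n$ normalization. A secondary subtlety is making the Horn--Weyl equality at $k=n$ rigorous despite the small-probability event of singular factors, which is dispatched by $(C1)$ applied to $\F_n$ and $\G_n$ separately.
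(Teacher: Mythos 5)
Your proposal is correct and follows essentially the same route as the paper: $(C0)$ via Horn's majorization (Corollary~\ref{cor:horn}) combined with Cauchy--Schwarz/H\"older, $(C1)$ via $s_n(\F_n\G_n)\ge s_n(\F_n)s_n(\G_n)$, the positive part of $(C2)$ via $(C0)$ for the product, and the negative part via the reverse tail inequality $\prod_{j=k}^n s_j(\F_n\G_n)\ge\prod_{j=k}^n s_j(\F_n)\prod_{j=k}^n s_j(\G_n)$ obtained from Theorem~\ref{thm:horn} and the determinant equality at $k=n$. The only (cosmetic) difference is that the paper converts $\log^-$ sums into $-\log$ sums by choosing the cutoff $k=\max\{n_1,\min\{j: s_j(\F_n\G_n)<1\}\}$, whereas you use the identity $|\log x|=2\log^+x-\log x$ and treat the corner range $(n_2,n]$ explicitly; both devices are equivalent here.
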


\begin{proof} We use similar arguments as in the proof of Theorem 8.22 in \cite{GKT:2014}.
Condition (C0) follows from Corollary \ref{cor:horn}, Cauchy-Schwarz inequality,
and Condition (C0) for the matrices $\F_n$~and~$\G_n$.
Condition (C1) with $\alpha = 0$ follows from Theorem~\ref{thm:horn}
and the fact that the matrices $\F_n$~and~$\G_n$
satisfy Condition (C1) with $\alpha = 0$.
Thus, it remains to check Condition (C2) with $\alpha = 0$.
Suppose that $0 < \gamma < 1$ and $\delta_n \to 0$, \linebreak[2]
and set $n_1 := [n-n\delta_n]$, $n_2 := [n-n^\gamma]$ as~usual.
We will show that for $\pm \in \{ +,- \}$,
\begin{align}
\label{eq:plusminus}
\tfrac1n \sum_{n_1 \leq j \leq n_2} \log^{\pm} s_j(\F_n \G_n) \to 0 \quad \text{in probability} \,.
\end{align}
For the positive part, this follows from the fact that
the matrices $\F_n \G_n$ satisfy Condition (C0).
For the negative part, note that by~Theorem~\ref{thm:horn}, we have
$$
\prod_{j=k}^{n} s_j(\F_n \G_n) \geq \prod_{j=k}^{n} s_j(\F_n) \cdot \prod_{j=k}^{n} s_j(\G_n) 
$$
for $k=1,\hdots,n$. 
Thus, taking $k := \max \{ n_1, \min \{ j: s_j(\F_n \G_n) < 1 \} \}$
and using that the matrices $\F_n$ and $\G_n$ satisfy Conditions (C1) and (C2) with $\alpha = 0$,
we~get
$$
\tfrac1n \sum_{j=n_1}^{n} \log^- s_j(\F_n \G_n) \leq \tfrac1n \sum_{j=n_1}^{n_2} \log^- s_j(\F_n) + \tfrac1n \sum_{j=n_1}^{n_2} \log^- s_j(\G_n) + o_P(1) = o_P(1) \,,
$$
and the proof is complete.
\end{proof}

\begin{lemma}
\label{lemma:C-base}
For each $n \in \mynat$, let $\X_n = (\tfrac{1}{\sqrt{n}} X_{jk})_{jk=1,\hdots,n}$
be a matrix as in \eqref{eq:girko-ginibre},
where the entries $X_{jk}$ are independent random variables 
satisfying \eqref{eq:real-moments} -- \eqref{eq:uniform-integrability}.
Then the matrices $\X_n$ and $\X_n^{-1}$ satisfy Condition C.
\end{lemma}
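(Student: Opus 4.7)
The plan is to verify Conditions (C0), (C1) and (C2) separately for $\X_n$ and for $\X_n^{-1}$, reducing the second case to the first via singular value inequalities together with a factorization identity.

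For $\X_n$, I would take $p=2$ in (C0) and use
$$
\tfrac{1}{n}\sum_{k=1}^{n} s_k^2(\X_n) \,=\, \tfrac{1}{n^2}\sum_{j,k=1}^{n}|X_{jk}|^2 \,\to\, 1
$$
in probability, which follows from the weak law of large numbers for triangular arrays together with the uniform integrability \eqref{eq:uniform-integrability}. For Conditions (C1) and (C2) applied to the shifted matrices $\X_n - \alpha\I_n$, I would invoke the classical results on iid matrices under our moment hypotheses: (C1) is the polynomial lower bound on the smallest singular value of a shifted iid matrix (Rudelson--Vershynin, Tao--Vu, G\"otze--Tikhomirov), and (C2) follows by combining (C1) at $\alpha$ with the weak convergence of the singular value distribution of $\X_n - \alpha\I_n$ to its (deterministic, compactly supported) limit, as spelled out in~\cite{GKT:2014}.

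For $\X_n^{-1}$, I would use the identity
$$
\X_n^{-1} - \alpha\I_n \,=\, -\alpha\,\X_n^{-1}\,(\X_n - \alpha^{-1}\I_n) \qquad (\alpha\ne 0)
$$
together with $s_k(\X_n^{-1}) = 1/s_{n-k+1}(\X_n)$. Then (C1) follows from $s_n(AB)\ge s_n(A)\,s_n(B)$, giving
$$
s_n(\X_n^{-1} - \alpha\I_n) \,\ge\, |\alpha|\,s_n(\X_n - \alpha^{-1}\I_n)/s_1(\X_n) \,\ge\, n^{-Q-1/2}/C
$$
with high probability, while the case $\alpha=0$ is just $s_n(\X_n^{-1}) = 1/s_1(\X_n) \ge c/\sqrt{n}$, using the operator norm bound $s_1(\X_n)\le C\sqrt{n}$ which itself follows from (C0). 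Condition (C2) is then obtained in parallel via Horn's multiplicative inequality (Theorem~\ref{thm:horn}), $\prod_{j\ge k} s_j(AB)\ge \prod_{j\ge k} s_j(A)\,s_j(B)$: the log-sum for $\X_n^{-1} - \alpha\I_n$ is controlled by the log-sums for $\X_n - \alpha^{-1}\I_n$ (covered by (C2) for $\X_n$) and for $\X_n^{-1}$ at $\alpha=0$; this last quantity equals $\tfrac{1}{n}\sum_{n^\gamma\le k\le n\delta_n}|\log s_k(\X_n)|$, which is $O(\delta_n)\to 0$ once one knows that $s_k(\X_n) = \Theta(1)$ throughout that index range, as provided by the operator norm bound and the Marchenko--Pastur law. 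For (C0) I would choose $p\in(0,1)$ so that $\int x^{-p/2}\,d\gamma(x)<\infty$ and split $\tfrac{1}{n}\sum_k s_k(\X_n)^{-p}$ into a bulk part (controlled by the weak convergence $\mu(\X_n\X_n^*)\to\gamma$ on $[\varepsilon,\infty)$) and a near-zero tail of the $O(n^\gamma)$ smallest singular values, bounded by $n^{-1+\gamma}\cdot n^{pQ} = o(1)$ via (C1) and (C2) for $\X_n$ at $\alpha=0$.

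The main technical obstacle will be (C0) for $\X_n^{-1}$: weak convergence of $\mu(\X_n\X_n^*)$ to $\gamma$ alone does not suffice for a negative-moment bound, since a priori mass could concentrate near the origin; the splitting therefore has to carefully combine the edge estimates (C1)--(C2) for $\X_n$ at $\alpha=0$ with weak convergence on sets bounded away from zero. All other verifications reduce to routine transfers through the singular value identities and inequalities above.
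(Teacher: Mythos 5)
Your skeleton coincides with the paper's: (C0) for $\X_n$ via the Frobenius norm, (C1)--(C2) for $\X_n-\alpha\I_n$ from the classical small-singular-value estimates (Lemmas \ref{lemma:C1} and \ref{lemma:C2}), $s_n(\X_n^{-1})=1/s_1(\X_n)$ for (C1) at $\alpha=0$, and the factorization $\X_n^{-1}-\alpha\I_n=-\alpha(\X_n-\alpha^{-1}\I_n)\X_n^{-1}$ combined with Horn's inequality for $\alpha\ne0$. But two steps do not go through as written.

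First, for (C2) of $\X_n^{-1}$ at $\alpha=0$ you reduce to $\tfrac1n\sum_{n^\gamma\le k\le n\delta_n}\log^+ s_k(\X_n)$ and dismiss it by asserting $s_k(\X_n)=\Theta(1)$ on that range, citing ``the operator norm bound.'' Under \eqref{eq:real-moments}--\eqref{eq:uniform-integrability} the only operator-norm bound you have (or can have) is $s_1(\X_n)=O_P(\sqrt n)$ from the Frobenius norm; $\|\X_n\|_2=O_P(1)$ requires fourth moments, and with merely square-integrable entries as many as $o(n)$ of the top singular values can be polynomially large. The bound $s_k(\X_n)\le C\sqrt n$ only yields $O(\delta_n\log n)$, which need not vanish since $\delta_n\to0$ can be arbitrarily slow. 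The paper closes exactly this hole differently: using that $\log x/x^p$ is decreasing for $x\ge e^{1/p}$, it splits the sum at the threshold $\delta_n^{-1}$ and bounds the contribution of the large singular values by $\delta_n^{p}\log(\delta_n^{-1})\cdot\tfrac1n\sum_{j}s_j^p(\X_n)=o_P(1)$, using only (C0) for $\X_n$. (Alternatively, the Chebyshev-type bound $s_k(\X_n)\le\big(\tfrac1k\sum_j s_j^2(\X_n)\big)^{1/2}=O_P(\sqrt{n/k})$ would repair your estimate.)

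Second, your two-part split for (C0) of $\X_n^{-1}$ leaves the intermediate regime uncovered: weak convergence of $\mu(\X_n\X_n^*)$ controls the indices with $s_j(\X_n)\ge\varepsilon$, and (C1) controls the $O(n^\gamma)$ smallest, but there remain up to $c(\varepsilon)n$ indices with $n^{-Q}\le s_j(\X_n)<\varepsilon$ (where $c(\varepsilon)\to0$ only as $\varepsilon\to0$), whose contribution you can bound only by $c(\varepsilon)\,n^{pQ}$, which does not tend to zero for fixed $\varepsilon$; note also that (C2) controls $\tfrac1n\sum|\log s_j|$, which does not dominate $\tfrac1n\sum s_j^{-p}$. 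You correctly identify this as the main obstacle but do not resolve it; the paper does not resolve it either within the lemma, instead invoking the proof of Theorem 8.22 in \cite{GKT:2014}, where quantitative bounds on the counting function of small singular values (finer than (C1)--(C2)) are used. As it stands, your argument for this point is a restatement of the difficulty rather than a proof.
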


\begin{proof}
To shorten notation, we omit the index $n$ throughout this proof. 

For the matrices $\X$, Condition C is checked in \cite{GT:2010b}
(in fact, it follows from the~relation $\ee\|\X\|_2^2 = n$
and from Lemmas \ref{lemma:C1} and \ref{lemma:C2}),
and for the matrices $\X^{-1}$, Condition C follows essentially
from the arguments in the proof of Theorem~8.22 in~\cite{GKT:2014}. 
For the convenience of the reader (and since we refer to the proof later),
let us provide some details for the matrices $\X^{-1}$.

\pagebreak[2]

For Condition (C0), see the proof of Theorem~8.22 in \cite{GKT:2014}. 
We now check Conditions (C1) and (C2) with $\alpha = 0$.
Condition (C1) follows from the relation $s_n(\X^{-1})= s_1^{-1}(\X)$
and Condition (C0) for the matrices $\X$.
For Condition~(C2), let $n_1 = [n - n \delta_n] \le [n - n^{1-\gamma}] = n_2$, 
where $\delta_n \ge n^{-\gamma}$, $\delta_n \to 0$.
Similarly as in~\eqref{eq:plusminus},
we consider the positive and negative part separately.
Since the matrices $\X$ satisfy Condition (C2) with $\alpha = 0$, 
we have, for $n$ large~enough,
$$
  \tfrac1n \sum_{j=n_1}^{n_2} \log^+ s_j(\X^{-1})
= \tfrac1n \sum_{j=n_1}^{n_2} \log^- s_{n-j+1}(\X)
\leq \tfrac1n \sum_{j=n_1}^{n_2} \log^- s_{j}(\X)
= o_P(1) \,.
$$
Moreover, suppose that the matrices $\X$ satisfy Condition (C0) with exponent $p$.
Then, since $\log x / x^p$ is decreasing in $x$ for $x \ge e^{1/p}$,
we have, for $n$ large~enough,
\begin{align*}
   \tfrac1n \sum_{j=n_1}^{n_2} \log^- & s_j(\X^{-1})
 = \tfrac1n \sum_{j=n_1}^{n_2} \log^+ s_{n-j+1}(\X) \\
&\le \tfrac1n \sum_{\substack{j=n_1,\hdots,n_2 \\ s_{n-j+1}(\X) \le \delta_n^{-1}}} \log^+ s_{n-j+1}(\X) 
   + \tfrac1n \sum_{\substack{j=n_1,\hdots,n_2 \\ s_{n-j+1}(\X)  >  \delta_n^{-1}}} \log^+ s_{n-j+1}(\X) \\
&\le \frac{n \delta_n - n^{1-\gamma}}{n} \log (\delta_n^{-1}) + \delta_n^{p} \log (\delta_n^{-1}) \, \tfrac1n \sum_{j=1}^{n} s_j^{p}(\X) \\
&\le \delta_n \log \delta_n^{-1} + \delta_n^{p} \log \delta_n^{-1} \myo_P(1)
= o_P(1) .
\end{align*}
Thus, 
the matrices $\X^{-1}$ satisfy Condition \CS.
We finally check Conditions (C1)~and~(C2) with $\alpha \ne 0$.
Here we may write 
$
\X^{-1} - \alpha \I = - \alpha (\X - \alpha^{-1} \I) \X^{-1},
$ 
and apply Lemma \ref{lemma:C-transfer} with $\F = \X - \alpha^{-1} \I$ and $\G = \X^{-1}$.
\end{proof}

\begin{remark*}
It follows from the preceding proof that if some matrices $\G_n$ satisfy Condition \CS,
then the inverse matrices $\G_n^{-1}$ satisfy Conditions (C1) and (C2) with $\alpha = 0$.
\end{remark*}

\pagebreak[1]

\begin{lemma}
\label{lemma:C-simple}
Let $\F_n = (\X_n^{(i_1)})^{\varepsilon_1} \cdots (\X_n^{(i_l)})^{\varepsilon_l}$,
where $l \in \mynat$, $i_1,\hdots,i_l \in \mynat$ \linebreak (not necessarily different)
and $\varepsilon_1,\hdots,\varepsilon_l \in \{ -1,+1 \}$ are fixed.
Then $\F_n$ satisfies Condition \CS.
\end{lemma}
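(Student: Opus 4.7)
The plan is to prove the lemma by a straightforward induction on the number of factors $l$, using Lemma~\ref{lemma:C-base} as the base case and Lemma~\ref{lemma:C-transfer} as the inductive step. No new estimates should be needed; everything has already been set up in the two preceding lemmas.

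For the base case $l=1$, the matrix $\F_n = (\X_n^{(i_1)})^{\varepsilon_1}$ is either a Girko--Ginibre matrix or its inverse. By Lemma~\ref{lemma:C-base}, such a matrix satisfies the full Condition C, and specializing to $\alpha = 0$ gives Condition \CS. For the inductive step, suppose that $\F_n^{(l-1)} := (\X_n^{(i_1)})^{\varepsilon_1} \cdots (\X_n^{(i_{l-1})})^{\varepsilon_{l-1}}$ satisfies Condition \CS. Since $(\X_n^{(i_l)})^{\varepsilon_l}$ also satisfies Condition \CS by the base case, Lemma~\ref{lemma:C-transfer} applied with $\F_n := \F_n^{(l-1)}$ and $\G_n := (\X_n^{(i_l)})^{\varepsilon_l}$ yields that the product $\F_n^{(l-1)} \cdot (\X_n^{(i_l)})^{\varepsilon_l} = \F_n$ satisfies Condition \CS, completing the induction.

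The one subtlety worth flagging is that the indices $i_1,\dots,i_l$ are allowed to repeat, so the factors in $\F_n$ need not be jointly independent. However, this is harmless: Lemma~\ref{lemma:C-transfer} makes no independence assumption on $\F_n$ and $\G_n$, as its proof only uses the Horn-type multiplicative inequalities for singular values (which are deterministic relations) together with conditions (C0), (C1), (C2) at $\alpha = 0$ applied separately to each factor. Thus the induction goes through even when, say, $(\X_n^{(i)})^{+1}$ and $(\X_n^{(i)})^{-1}$ both appear as factors. If anything is a potential obstacle, it is only the bookkeeping needed to confirm that Lemma~\ref{lemma:C-transfer} really is stated without independence; once that is in hand, the argument is immediate.
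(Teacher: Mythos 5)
Your proof is correct and is essentially identical to the paper's: induction on $l$, with Lemma~\ref{lemma:C-base} giving the base case and Lemma~\ref{lemma:C-transfer} supplying the inductive step. Your observation that Lemma~\ref{lemma:C-transfer} requires no independence between the factors (so repeated indices are harmless) is accurate and is exactly why the paper can state the lemma for not-necessarily-distinct $i_1,\hdots,i_l$.
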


\begin{proof}
By~Lemma~\ref{lemma:C-base},
the claim is true (even with the stronger Condition C) for~$l = 1$.
By~Lemma~\ref{lemma:C-transfer} and induction,
the claim remains true for~$l > 1$.
\end{proof}

\begin{lemma} 
\label{lemma:C++}
For each $n \in \mynat$, let $\X_n = (\tfrac{1}{\sqrt{n}} X_{jk})_{jk=1,\hdots,n}$
be a matrix as~in~\eqref{eq:girko-ginibre},
where the entries $X_{jk}$ are independent random variables 
satisfying \eqref{eq:real-moments} -- \eqref{eq:uniform-integrability}.
Furthermore, for each $n \in \mynat$, let $\A_n$, $\B_n$ and $\C_n$ be random matrices 
of dimension $n \times n$ such that $\A_n$, $\B_n$, $\C_n$ and $\X_n$ are independent. \pagebreak[2]
\begin{enumerate}[(a)]
\item
If the matrices $\A_n$ and $\B_n$ satisfy Condition \CS and the matrices $\C_n$ satisfy Condition $(C0)$,
then the~matrices $\A_n \X_n \B_n + \C_n$ satisfy Condition~C.
\item
If the matrices $\A_n$ and $\B_n$ satisfy Condition \CS and the matrices~$\C_n$ satisfy Condition~C
or $\C_n = \mathbf{0}$ for all $n \in \mynat$, then the matrices $\A_n \X_n^{-1} \B_n + \C_n$ satisfy Condition C.
\end{enumerate}
\end{lemma}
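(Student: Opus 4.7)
The plan is to reduce to Condition $C_\text{simple}$ and then verify its three ingredients by a factorization combined with Horn product inequalities.

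\textbf{Reduction.} Since $\F_n - \alpha\I_n = \A_n\X_n^{\pm1}\B_n + (\C_n - \alpha\I_n)$ and the shift by $-\alpha\I_n$ preserves Condition~(C0) in~(a) and Condition~C in~(b), it suffices to prove $C_\text{simple}$ for $\F_n$. On the high-probability event that $\A_n,\B_n$ are invertible (by their~(C1) at $0$) and, in case~(b) with $\C_n\ne\mathbf 0$, also $\C_n$ is invertible (by Condition~C on $\C_n$), one has the factorizations
\begin{align*}
\F_n &= \A_n(\X_n + \D_n)\B_n && \text{in case (a),}\\
\F_n &= \A_n\X_n^{-1}(\X_n + \D_n^{-1})\D_n\B_n && \text{in case (b),}
\end{align*}
with $\D_n := \A_n^{-1}\C_n\B_n^{-1}$ independent of $\X_n$. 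The sub-case $\C_n=\mathbf 0$ of~(b) is immediate from Lemma~\ref{lemma:C-transfer} applied twice to $\A_n,\X_n^{-1},\B_n$, using Lemma~\ref{lemma:C-base}.

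\textbf{Conditions (C0) and (C1) at $\alpha=0$.} For~(C0), I combine the Schatten-$p$ (quasi-)triangle inequality with Horn--Weyl majorization and H\"older's inequality, applying (C0) for $\A_n,\B_n,\X_n^{\pm1}$ (the latter from Lemma~\ref{lemma:C-base}) to obtain $\tfrac1n\sum_k s_k^p(\F_n) = O_P(1)$ for sufficiently small $p>0$. For~(C1) at~$0$, I use $s_n(\F_n) \ge s_n(\A_n)\cdot(\text{smallest SV of the middle factors})\cdot s_n(\B_n)$: in case~(a) this smallest singular value is $s_n(\X_n + \D_n) \ge n^{-Q'}$ by Lemma~\ref{lemma:C1} applied conditionally on $\D_n$; in case~(b) it equals $s_n(\X_n^{-1})\,s_n(\X_n + \D_n^{-1})\,s_n(\D_n)$, with the first factor $\ge c\,n^{-1/2}$ from $s_1(\X_n) = O_P(\sqrt n)$, the second $\ge n^{-Q'}$ by Lemma~\ref{lemma:C1} applied to the deterministic shift $\D_n^{-1}$, and the third $\ge n^{-Q''}$ from $s_n(\C_n) \ge n^{-Q}$ (Condition~C on~$\C_n$) together with $s_1(\A_n),s_1(\B_n) \le n^{Q'''}$ (from~(C0)).

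\textbf{Condition (C2) at $\alpha=0$.} Iterating the lower Horn inequality on the full factorization gives
\[
\prod_{j=k}^{n} s_j(\F_n) \ge \prod_{\M}\prod_{j=k}^{n} s_j(\M),
\]
where $\M$ ranges over the matrix factors. Imitating the proof of Lemma~\ref{lemma:C-transfer}, I choose $k := \max\{n_1,\min\{j : s_j(\F_n)<1\}\}$, take logarithms, and use $-x\le x^-$ to obtain
\[
\sum_{j=n_1}^{n_2}\log^{-} s_j(\F_n) \;\le\; \sum_{\M}\sum_{j=n_1}^{n}\log^{-} s_j(\M).
\]
Each summand on the right is $o_P(n)$: for $\A_n,\B_n$ by~$C_\text{simple}$, with the tail on $[n_2+1,n]$ negligible because $(n-n_2)\log n/n = n^{\gamma-1}\log n = o(1)$ combined with~(C1); for $\X_n + \D_n$ or $\X_n + \D_n^{-1}$ by the (C2)-type estimate for shifted Girko matrices (Lemma~\ref{lemma:C2}) applied conditionally; for $\X_n^{-1}$ by Lemma~\ref{lemma:C-base}; and for $\D_n$, when it occurs, by one further Horn step together with~(C1),~(C2) at $0$ for $\A_n^{-1},\B_n^{-1}$ (from the Remark following Lemma~\ref{lemma:C-base}) and Condition~C on~$\C_n$. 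The $\log^{+}$-contribution is negligible because $s_{n_1}(\F_n) = O_P(1)$, a consequence of~(C0).

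\textbf{Main obstacle.} The delicate point is the conditional (C2)-type bound for the shifted Girko matrices $\X_n + \D_n$ and $\X_n + \D_n^{-1}$, where the shift is merely polynomially controlled in norm. If the available auxiliary lemma (Lemma~\ref{lemma:C2}) requires a uniformly bounded shift, I will insert an additional truncation step on~$\C_n$, SVD-capping its singular values at a large level $K$ with a small-rank residual as in the proof of Lemma~\ref{lemma:A}; the rank inequality $s_{j+r}(M+E) \le s_j(M)$ then transfers the conclusion back to~$\F_n$ with an $\varepsilon$-error which may be sent to zero.
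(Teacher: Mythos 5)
Your proposal is correct and follows essentially the same route as the paper: the same factorizations $\A(\X+\D)\B$ and $\A\X^{-1}(\X+\D^{-1})\cdot\A^{-1}\C$ (your $\D\B=\A^{-1}\C$), the same Horn/Lemma~\ref{lemma:C-transfer}-style treatment of the factors, and the same conditional use of small-singular-value bounds for the shifted Girko matrix. The obstacle you flag at the end does not arise, since Lemmas~\ref{lemma:C1x} and~\ref{lemma:C2x} are already stated for deterministic matrix shifts that are only polynomially bounded in operator norm, which is exactly what the paper invokes conditionally on $\A_n,\B_n,\C_n$ (and is just as well, because the proposed rank-perturbation fallback would not control the smallest singular values needed for $(C2)$).
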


\begin{proof}[Proof]
To shorten notation, we omit the index $n$ throughout this proof.
First~of~all, let us note that if a sequence of random matrices $\G_n$ 
(with $\G_n$ of dimension $n \times n$) satisfies Condition (C0),
there exists some $L_\G > 0$ such~that
\begin{align}
\label{eq:polynomial-bound}
\lim_{n \to \infty} \pp(\|\G_n\| \geq n^{L_\G}) = 0 \,.
\end{align} 
In fact, if $p > 0$ is such that $\tfrac1n \sum_{k=1}^{n} s_k^p(\G_n)$
is bounded in~probability as $n \to \infty$ and $\varepsilon > 0$ is~arbitrary, it follows that
$$
     \limsup_{n \to \infty} \pp(s_1(\G_n) \ge n^{(1+\varepsilon)/p})
\leq \limsup_{n \to \infty} \pp(\tfrac{1}{n} \sum_{k=1}^{n} s_k^p(\G_n) \ge n^\varepsilon)
= 0 \,,
$$
so that the assertion holds for any $L_\G > 1/p$.

\pagebreak[1]

(a) Condition (C0) follows from Corollary \ref{cor:singularvalues},
Corollary \ref{cor:horn}, H\"older's inequality,
and the fact that the matrices $\A$, $\B$, $\C$ and $\X$ satisfy Condition (C0).
To prove Conditions (C1) and (C2), we~use the factorization
$$
\A \X \B + \C - \alpha \I = \A (\X + \A^{-1} (\C - \alpha \I) \B^{-1}) \B \,.
$$
Then, similarly as in the proof of Lemma \ref{lemma:C-transfer},
it remains to be checked that for each of the three factors $\M_n$ 
on the right-hand side, we have, for some $Q > 0$,
$$
\pp(s_n(\M_n) \leq n^{-Q}) = o(1)
\quad\text{and}\quad
\tfrac1n \sum_{n_1 \le n \le n_2} \log^- s_j(\M_n) = o_P(1) \,.
$$
For $\A$ and $\B$, this is true by assumption.
For $\X + \A^{-1} (\C - \alpha \I) \B^{-1}$,
this follows from Lemmas \ref{lemma:C1x} and \ref{lemma:C2x}.
More precisely, if the matrices $\A$~and~$\B$ satisfy Con\-dition (C1) with $\alpha = 0$ and $Q > 0$,
and the~matrices $\C$ satisfy \eqref{eq:polynomial-bound} with $L_\C > 0$,
we have
$\pp(s_1(\A^{-1} (\C - \alpha \I) \B^{-1}) > n^{2Q+L_\C}) \to 0$
by Theorem \ref{thm:horn}.
Thus, we may use Lemmas \ref{lemma:C1x} and \ref{lemma:C2x}
conditionally on~$\A,\B,\C$,
and on the set of~probability $1 + o(1)$
where $s_1(\A^{-1} (\C - \alpha \I) \B^{-1}) \leq n^{2Q+L_\C}$.

\pagebreak[1]

(b) We consider only the case that the matrices $\C$ satisfy Condition C,
leaving the simpler case $\C = \mathbf 0$ to the reader.
Similarly as above, Condition (C0) follows 
from Corollary \ref{cor:singularvalues},
Corollary \ref{cor:horn}, H\"older's inequality,
and the fact that the matrices $\A$, $\B$, $\C$ and~$\X^{-1}$ satisfy Condition~(C0).
To prove Conditions (C1) and (C2), we~use the factorization
$$
\A \X^{-1} \B + \C - \alpha \I = \A \X^{-1} (\B(\C - \alpha \I)^{-1}\A + \X) \A^{-1} (\C - \alpha \I) \,.
$$
Again, similarly as in the proof of Lemma \ref{lemma:C-transfer},
it remains to be checked that for each of the five factors $\M_n$ 
on the right-hand side, we have, for some $Q > 0$,
$$
\pp(s_n(\M_n) \leq n^{-Q}) = o(1)
\quad\text{and}\quad
\tfrac1n \sum_{n_1 \le n \le n_2} \log^- s_j(\M_n) = o_P(1) \,.
$$
But this is true
(i) by assumption,
(ii) by Lemma \ref{lemma:C-base},
(iii) by Lemmas \ref{lemma:C1x}~and~\ref{lemma:C2x} (applied conditionally on $\A$, $\B$, $\C$),
(iv) by the remark below Lemma \ref{lemma:C-base},
(v) by~assumption.
\end{proof}

\subsection{Proof of Theorem \ref{thm:limit-theorem}}
\label{sub:main-proof}

After these preparations, we may turn to the proof of Theorem \ref{thm:limit-theorem}.
Given a sequence of random matrices $(\G_n)_{n \in \mynat}$, we write 
$\nu(\G_n)$ for the singular value distributions,
$\mu(\G_n \G_n^*)$ for the squared singular value distributions,
$\mys \nu(\G_n)$ for the symmetrized singular value distributions,
and $\nu_\G$, $\mu_{\G \G^*}$ and $\mys \nu_\G$ for the corresponding 
weak limits in~prob\-ability (if existent).

\pagebreak[3]

Let us first consider the singular value distributions.
We will first use induction on $l$ to prove the claim for the case $m=1$
and then use induction on $m$ to prove the claim for the case $m>1$.
More precisely, we~will show the following:
\begin{align}
\label{eq:1000}
\begin{array}{l}
\text{The matrices $\F_n(\X)$ from \eqref{eq:sums-of-products} satisfy Conditions A and B,} \\
\text{and for any $t > 0$, the singular value distributions} \\
\text{of the matrices $\F_{n,t}(\X)$ converge weakly in probability} \\
\text{to the probability measure $\nu_{t}$ on $(0,\infty)$ with symmetrization} \\
\text{$\mys \nu_{t} = \big( \myq^{-1} ( \gamma_t^{\varepsilon_1} \boxtimes \cdots \boxtimes \gamma_t^{\varepsilon_l} ) \big)^{\boxplus m}$.}
\end{array}
\end{align}
Indeed, by Condition~A, we may then let $t \to 0$ to get the limiting singular value
distribution of the matrices $\F_n(\X)$.

\pagebreak[2]

\emph{Products of independent random matrices.}
For $\F_n(\X) = \X_n$ and $\F_n(\X) = \X_n^{-1}$, Conditions A and B follow from 
Lemmas \ref{lemma:A} and \ref{lemma:B}, respectively.
It follows from the results in Section \ref{sec:RMT} that,
for $t > 0$ and $\varepsilon \in \{ -1,+1 \}$,
we have $\mu(\X_{n,t}^{\varepsilon} (\X_{n,t}^\varepsilon)^*) \to \gamma_t^\varepsilon$.
Thus, \eqref{eq:1000} is true for $l = 1$.

Now let $l > 1$, suppose that \eqref{eq:1000} holds for any $(l-1)$-fold product $\G_n$,
and let $\F_n$ be an $l$-fold product.
Then we have the representation $\F_n(\X) = \X_n^\varepsilon \G_n(\X)$,
where $\varepsilon = +1$ or $\varepsilon = -1$
and $\X_n$ and $\G_n(\X)$ are independent.
The inductive hypo\-thesis ensures that $\mu(\G_n(\X) \G_n^*(\X)) \to \mu_{\G \G^*}$, 
a non-random probability measure on $(0,\infty)$.
It therefore follows from Lemmas \ref{lemma:A} and \ref{lemma:B}
that the matrices $\F_n(\X)$ satisfy Conditions A and B.
Now, for any $t > 0$, the matrices $\Y_{n,t}^\varepsilon$ and $\G_{n,t}(\Y)$
are independent bi-unitary invariant matrices
with 
$$
\mu(\Y_{n,t}^\varepsilon (\Y_{n,t}^\varepsilon)^*) \to \gamma_t^{\varepsilon}
\quad\text{and}\quad
\mu(\G_{n,t}(\Y) \G_{n,t}^*(\Y)) \to \mu_{\G(t)\G(t)^*} \,, 
$$
respectively. Therefore, by asymptotic freeness (see Proposition \ref{prop:asymptoticfreeness}\,(a)),
$$
\mu(\F_{n,t}(\Y) \F_{n,t}^*(\Y)) \to \gamma_t^{\varepsilon} \boxtimes \mu_{\G(t)\G(t)^*} \,.
$$ 
Thus, by Theorem \ref{thm:universality}\,(a),
\eqref{eq:1000} holds for the matrices $\F_n(\X)$ as well.

Hence, by induction on $l$, we come to the conclusion
that \eqref{eq:1000} holds for any product of independent matrices
(i.e. for the case $m = 1$).

\emph{Sums of products of independent random matrices.}
We have just proved \eqref{eq:1000} for $m = 1$.
Now let $m > 1$, suppose that \eqref{eq:1000} holds 
for any $(m-1)$-fold sum $\C_n$ of products of independent random matrices,
and let $\F_n$ be an $m$-fold sum of products of independent random matrices.
Then we have the representation $\F_n(\X) = \G_n(\X) + \C_n(\X)$,
where $\G_n(\X) = \X_n^{\varepsilon} \B_n(\X)$ (possibly with $\B_n(\X) = \I_n$) 
\linebreak and $\X_n$, $\B_n(\X)$ and $\C_n(\X)$ are independent.
The result for the case $m=1$ and the inductive hypothesis ensure
that $\nu(\B_n(\X)) \to \nu_\B$ and $\nu(\C_n(\X)) \to \nu_\C$, respectively,
where $\nu_\B$ and $\nu_\C$ are non-random probability measures on $(0,\infty)$.
It therefore follows from Lemmas \ref{lemma:A} and \ref{lemma:B}
that the matrices $\F_n(\X)$ satisfy Conditions A and B.
Moreover, for any $t > 0$, the matrices $\G_{n,t}(\Y)$ and $\C_{n,t}(\Y)$ 
are independent bi-unitary invariant matrices
with 
$$
\mys\nu(\G_{n,t}(\Y)) \to \mys\nu_{\G(t)}
\quad\text{and}\quad
\mys\nu(\C_{n,t}(\Y)) \to \mys\nu_{\C(t)} \,
$$
respectively.
Therefore, by asymptotic freeness (see Proposition \ref{prop:asymptoticfreeness}\,(c)),
$$
\mys\nu(\F_{n,t}(\Y)) \to \mys\nu_{\C(t)} \boxplus \mys\nu_{\G(t)} \,.
$$ 
Thus, by Theorem \ref{thm:universality}\,(a),
\eqref{eq:1000} holds for the matrices $\F_n(\X)$ as~well.

Hence, by induction on $m$, we come to the conclusion
that \eqref{eq:1000} holds for any sum of products of independent matrices
(i.e. for the case $m > 1$).

Let us now consider the eigenvalue distributions. 
To begin with, using Lemma \ref{lemma:C++},
we may check by~induction on $m$ that the matrices $\F_n(\X)$ also satisfy Condition~C. 
Therefore, as the matrices $\F_n(\X)$ satisfy Conditions A, B and C,
we may use Theorem \ref{thm:universality}\,(b), and it remains to determine
the limiting eigenvalue distributions in the~Gaussian case, i.e.\@ for the matrices $\F_n(\Y)$.
Here, it~follows by asymptotic freeness (see Proposition \ref{prop:asymptoticfreeness}\,(d)) that 
$\mys \nu(\F_{n,t}(\Y) - \alpha \I_n) \to \mys \nu_{t,\alpha}$ $:= (\mys \nu_t) \boxplus B(\alpha)$.
Letting $t \to 0$ and using Condition~A, it further follows that
$\mys \nu(\F_{n}(\Y) - \alpha \I_n) \to \mys \nu_{\alpha} := (\mys \nu) \boxplus B(\alpha)$,
where $\nu$ is the probability measure described in the theorem.
Now use Theorem \ref{thm:main}.
\qed

\subsection{Rigorous Proof of Theorems \ref{thm:new} and \ref{thm:newpower}}
\label{sub:rigor}

\begin{proof}[Rigorous Proof of Theorem \ref{thm:new}]
By Theorem \ref{thm:limit-theorem}, the limiting eigenvalue distri\-butions
of the matrices $\F_n^{(0)}$ and $m^{-(l+1)/2} (\F_n^{(1)}+\cdots+\F_n^{(m)})$
in Theorem \ref{thm:new} are given by 
$$
\myh\Big(\myq^{-1}(\gamma_1^{-1} \boxtimes \gamma_1^{\boxtimes l})\Big)
\quad\text{and}\quad
\myh\Big( \myd_{m^{-(l+1)/2}} (\myq^{-1}(\gamma_1^{-1} \boxtimes \gamma_1^{\boxtimes l}))^{\boxplus m} \Big) \,,
$$
respectively. 
Now, similarly as in the formal proof of Theorem \ref{thm:new}, 
we find that
$$
\myq^{-1}(\gamma_1^{-1} \boxtimes \gamma_1^{\boxtimes l}) = \sigma_s(\tfrac{2}{l+1})
$$
by comparing the $S$-transforms of the two measures,
which concludes the proof.
\end{proof}

\begin{proof}[Rigorous Proof of Theorem \ref{thm:newpower} (Sketch)]
Here we need an analogue of Theorem \ref{thm:limit-theorem} for \emph{certain}
products of powers of independent Girko--Ginibre matrices and their inverses.
More precisely, we now consider random matrices of the form
\begin{align}
\label{eq:sums-of-products-of-powers}
\F_n := \sum_{q=1}^{m} \F_n^{(q)} := \sum_{q=1}^{m} \prod_{r=1}^{k} ((\X_n^{((q-1)l+r)})^{\varepsilon_{r}})^{l_r} \,,
\end{align}
where $m,k \in \mynat$, $\varepsilon_1,\hdots,\varepsilon_k \in \{ -1,+1 \}$ 
and $l_1,\hdots,l_k \in \mynat$ are fixed,
\begin{align}
\label{eq:extra-condition}
\text{for some $r=1,\hdots,k$, we have $l_r = 1$,}
\end{align}
and the~$\X_n^{(q)}$ are independent Girko-Ginibre matrices as in the introduction.
Then, similarly as in the proof of Theorem~\ref{thm:limit-theorem},
we need to verify Conditions A, B and C:

\medskip

\noindent\textit{Condition A.}
Here we can regularize the matrices $(\X_n^{-1})^l$ by means of $((\X_n)_t^{-1})^l$
(i.e.\@ each factor in the power is regularized individually) and invoke Lemma~\ref{lemma:A}.
It is important here that in Lemma~\ref{lemma:A}, 
the matrices $\A_n$, $\B_n$ and~$\C_n$ need not be independent of $\X_n$;
see Remark \ref{remark:no-independence}.

\medskip

\noindent\textit{Condition B.}
Here we may extend Lemma \ref{lemma:B} to products of powers of independent Girko--Ginibre matrices, 
using similar arguments as in Sections 8.1.3 and 8.1.4 in~\cite{GKT:2014}.

\medskip

\noindent\textit{Condition C.}
Under the extra condition \eqref{eq:extra-condition},
it follows from Lemma \ref{lemma:C++} (applied with $\X = \X^{(r)}$)
and by induction on $m$ that the matrices $\F_n$ satisfy Condition~C. 
(Unfortunately, without condition \eqref{eq:extra-condition},
Lemma~\ref{lemma:C++} does not allow us to draw this conclusion in general,
even though we would expect that Condition~C continues to hold in this case.)

\pagebreak[2]
\medskip

After these considerations, it is straightforward to extend Theorem \ref{thm:limit-theorem}
to sums of products of powers satisfying \eqref{eq:extra-condition}.
It follows that the limiting eigenvalue distributions
of the matrices $\F_n^{(0)}$ and $m^{-(l+1)/2} (\F_n^{(1)}+\cdots+\F_n^{(m)})$
in Theorem \ref{thm:newpower} are given by 
$$
\myh\Big(\myq^{-1}(\gamma_1^{-1} \boxtimes \gamma_1^{\boxtimes l_1} \boxtimes \cdots \boxtimes \gamma_1^{\boxtimes l_k})\Big)
\quad\text{and}\quad
\myh\Big( \myd_{m^{-(l+1)/2}} (\myq^{-1}(\gamma_1^{-1} \boxtimes \gamma_1^{\boxtimes l_1} \boxtimes \cdots \boxtimes \gamma_1^{\boxtimes l_k})^{\boxplus m}) \Big) \,,
$$
respectively. 
Now, similarly as in the formal proof of Theorem \ref{thm:newpower}, 
we find that
$$
\myq^{-1}(\gamma_1^{-1} \boxtimes \gamma_1^{\boxtimes l_1} \boxtimes \cdots \boxtimes \gamma_1^{\boxtimes l_k}) = \sigma_s(\tfrac{2}{l+1})
$$
by comparing the $S$-transforms of the two measures, which concludes the proof.
\end{proof}

\medskip

\section{Appendix: Auxiliary Results}
\label{sec:auxiliary-results}

\subsection{Inequalities for Singular Values} 
In this section we collect a number of results from \cite[Section 3.3]{HoJo} 
which we use to verify Conditions (C0) -- (C2).
Throughout this section, we assume that $\A$ is a square matrix of dimension $n \times n$
with eigenvalues $|\lambda_1(\A)| \geq \cdots \geq |\lambda_n(\A)|$
and singular values $s_1(\A) \geq \cdots \geq s_n(\A)$.
We usually state the results for the~largest singular values,
but using the relation $s_j(\A^{-1}) = s_{n-j+1}^{-1}(\A)$, $j=1,\hdots,n$,
it is immediate that similar results hold for the~smallest singular values.
Also, it is easy to see that Theorem \ref{thm:horn} and its~corollary
extend to matrix products with more than two factors.

\begin{theorem}[Horn]
\label{thm:horn}
For all $k=1,\hdots,n$,
$
\prod_{j=1}^{k} s_j(\A\B) \le \prod_{j=1}^{k} s_j(\A) s_j(\B) \,,
$
with equality for $k = n$.
\end{theorem}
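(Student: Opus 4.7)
The plan is to reduce the inequality to the submultiplicativity of the operator norm applied to the $k$-th compound (exterior power) matrix of $\A$ and $\B$. For an $n \times n$ matrix $\M$, let $C_k(\M)$ denote its $k$-th compound matrix, i.e.\@ the $\binom{n}{k} \times \binom{n}{k}$ matrix whose entries are the $k \times k$ minors of $\M$ (indexed by $k$-subsets of $\{1,\hdots,n\}$). The pivotal structural fact is that via the singular value decomposition $\M = U \Sigma V^*$, one has $C_k(\M) = C_k(U) C_k(\Sigma) C_k(V^*)$ with $C_k(U)$ and $C_k(V^*)$ unitary and $C_k(\Sigma)$ diagonal, with diagonal entries $s_{i_1}(\M) \cdots s_{i_k}(\M)$ over all $1 \leq i_1 < \cdots < i_k \leq n$. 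Hence the singular values of $C_k(\M)$ are exactly these products, and in particular the top one is
$$
s_1(C_k(\M)) = \prod_{j=1}^{k} s_j(\M) \,.
$$

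First I would establish the Cauchy--Binet identity $C_k(\A\B) = C_k(\A) \, C_k(\B)$, which is the content of the classical Cauchy--Binet formula expressed at the level of compound matrices. Then I would apply the submultiplicativity of the largest-singular-value (spectral) norm to obtain
$$
s_1(C_k(\A\B)) = s_1\bigl(C_k(\A) C_k(\B)\bigr) \leq s_1(C_k(\A)) \cdot s_1(C_k(\B)) \,.
$$
Combining this with the identification above yields the inequality in the theorem.

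For the equality case $k = n$, one has $C_n(\M) = (\det \M)$, a $1 \times 1$ matrix, so $s_1(C_n(\M)) = |\det \M|$, and the multiplicativity $\det(\A\B) = \det\A \cdot \det\B$ immediately gives $\prod_{j=1}^{n} s_j(\A\B) = |\det\A| \cdot |\det\B| = \prod_{j=1}^{n} s_j(\A) \cdot \prod_{j=1}^{n} s_j(\B)$.

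The main obstacle is the multilinear-algebra bookkeeping needed to verify the singular-value identification of $C_k(\M)$: one must check that $C_k$ is a homomorphism on $n \times n$ matrices (which requires the Cauchy--Binet formula), that it maps unitary to unitary, and that $C_k(\Sigma)$ is diagonal with the stated entries when $\Sigma$ is diagonal. Each of these is standard but somewhat tedious; alternatively, one can bypass compound matrices altogether and argue via the variational characterization $\prod_{j=1}^{k} s_j(\M) = \max \{|\det(\M|_W)| : W \subseteq \mycmplx^n, \dim W = k, \text{$W$ orthonormal}\}$, but then the submultiplicativity step becomes less transparent. I would favor the compound-matrix route for its conceptual cleanness and its simultaneous handling of both the inequality and the equality case.
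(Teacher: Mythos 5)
Your argument is correct: the compound-matrix reduction (Cauchy--Binet multiplicativity of $C_k$, the identification $s_1(C_k(\M)) = \prod_{j=1}^k s_j(\M)$ via the SVD, submultiplicativity of the spectral norm, and the determinant identity for $k=n$) is exactly the standard proof of Horn's inequality. The paper gives no proof of its own --- it simply cites \cite[Section 3.3]{HoJo} --- and your write-up is essentially the argument found there, so there is nothing to add.
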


\begin{corollary}
\label{cor:horn}
For all $p > 0$ and all $k=1,\hdots,n$, we have
$
\sum_{j=1}^{k} (s_j(\A\B))^p \le \sum_{j=1}^{k} (s_j(\A) s_j(\B))^p \,.
$
\end{corollary}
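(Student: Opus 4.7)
The plan is to deduce the stated inequality from Theorem~\ref{thm:horn} via the standard passage from log-majorization to weak majorization under an increasing convex function. Set $a_j := s_j(\A\B)$ and $b_j := s_j(\A)\,s_j(\B)$; both sequences are non-negative and non-increasing in $j$, and Theorem~\ref{thm:horn} gives the log-majorization
$$
\prod_{j=1}^{k} a_j \;\le\; \prod_{j=1}^{k} b_j \qquad (k=1,\hdots,n).
$$

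First I would reduce to the case where all $a_j$ and $b_j$ are strictly positive. If $b_{j_0}=0$ for some $j_0$, then $\prod_{j=1}^{k} b_j = 0$ for every $k \ge j_0$, whence the log-majorization forces $a_j=0$ for $j \ge j_0$ as well; both sides of the target inequality then vanish on the tail $j \ge j_0$, and the claim reduces to the truncated sequences of length $j_0-1$. After this reduction, setting $\alpha_j := \log a_j$ and $\beta_j := \log b_j$ yields non-increasing real sequences satisfying the weak majorization $\sum_{j=1}^{k} \alpha_j \le \sum_{j=1}^{k} \beta_j$ for $k=1,\hdots,m$, where $m$ is the truncated length.

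Next I would invoke the classical Hardy--Littlewood--P\'olya lemma: if a non-increasing sequence $(\alpha_j)$ is weakly majorized by a non-increasing sequence $(\beta_j)$ and $\psi\colon \myreal \to \myreal$ is non-decreasing and convex, then $\sum_{j=1}^{k} \psi(\alpha_j) \le \sum_{j=1}^{k} \psi(\beta_j)$ for every $k$. Applying this with $\psi(y) := e^{py}$, which is non-decreasing and convex for any $p > 0$, gives $\sum_{j=1}^{k} a_j^p \le \sum_{j=1}^{k} b_j^p$, as required. There is essentially no real obstacle here; the argument is a direct textbook corollary of Horn's theorem, the only subtle point being the truncation handling zero singular values. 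If a self-contained argument is preferred, the Hardy--Littlewood--P\'olya step can be established in a few lines by Abel summation against the convex function $y \mapsto e^{py}$, using the non-increasing orderings of $(\alpha_j)$ and $(\beta_j)$ to ensure that the resulting weights are non-negative.
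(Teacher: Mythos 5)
Your argument is correct and is exactly the standard derivation (log-majorization plus the Hardy--Littlewood--P\'olya/Weyl lemma for increasing convex functions); the paper itself gives no proof of Corollary~\ref{cor:horn}, citing it from \cite[Section 3.3]{HoJo}, where this is precisely the argument used. One small repair to your reduction: truncating where $b_{j_0}=0$ does not guarantee that the surviving $a_j$ are positive (e.g.\@ $\A=\mathrm{diag}(1,0)$, $\B=\mathrm{diag}(0,1)$ gives $a_1=0<1=b_1$), so you should also truncate at the first index where $a_j$ vanishes --- beyond that index the left-hand sum gains nothing while the right-hand sum only grows, and on the remaining initial segment both sequences are strictly positive, so the passage to logarithms is legitimate.
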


For the next lemma, let us make the convention that $s_j(\A) := 0$ for $j > n$.

\begin{lemma}
\label{lemma:singularvalues}
For all $j,k=1,\hdots,n$, we have
$s_{j+k-1}(\A+\B) \leq s_j(\A) + s_k(\B)$
and
$s_{j+k-1}(\A \cdot \B) \leq s_j(\A) \cdot s_k(\B)$.
\end{lemma}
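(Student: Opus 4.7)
The plan is to prove both inequalities via the Courant--Fischer (min--max) characterization of singular values, which states that
\[
s_m(\M) \;=\; \min_{\substack{V \subseteq \mycmplx^n \\ \dim V = n-m+1}} \max_{\substack{x \in V \\ \|x\|_2 = 1}} \|\M x\|_2 \;=\; \max_{\substack{V \subseteq \mycmplx^n \\ \dim V = m}} \min_{\substack{x \in V \\ \|x\|_2 = 1}} \|\M x\|_2 \,,
\]
and I would quote this at the outset together with the basic dimension-count fact that for subspaces $V,W \subseteq \mycmplx^n$ one has $\dim(V \cap W) \geq \dim V + \dim W - n$.

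For the additive inequality, I would pick subspaces $V_\A, V_\B \subseteq \mycmplx^n$ with $\dim V_\A = n-j+1$ and $\dim V_\B = n-k+1$ that attain the min--max value for $\A$ and $\B$, so that $\|\A x\|_2 \leq s_j(\A)\|x\|_2$ on $V_\A$ and $\|\B x\|_2 \leq s_k(\B)\|x\|_2$ on $V_\B$. By the dimension count, $W := V_\A \cap V_\B$ satisfies $\dim W \geq n - (j+k-1) + 1$. For any unit $x \in W$ the triangle inequality gives $\|(\A+\B)x\|_2 \leq \|\A x\|_2 + \|\B x\|_2 \leq s_j(\A) + s_k(\B)$, and the outer minimum in min--max applied to $\A+\B$ (taken over subspaces of dimension exactly $n-(j+k-1)+1$, a dimension we can reduce to by passing to a subspace of $W$) yields $s_{j+k-1}(\A+\B) \leq s_j(\A) + s_k(\B)$.

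For the multiplicative inequality, the idea is again to exhibit a subspace of sufficient dimension on which $\|\A\B x\|_2$ is controlled. With $V_\A$ and $V_\B$ as above, set $W := \{x \in V_\B : \B x \in V_\A\}$. Since $W$ is the preimage of $V_\A$ under the restriction $\B|_{V_\B}\colon V_\B \to \mycmplx^n$, a rank-nullity / codimension argument gives $\dim W \geq \dim V_\B - (n - \dim V_\A) = n - (j+k-1) + 1$. Then for unit $x \in W$, $\|\A\B x\|_2 \leq s_j(\A)\|\B x\|_2 \leq s_j(\A)\,s_k(\B)$, and min--max applied to $\A\B$ yields the claim.

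The only subtle point is the codimension count in the multiplicative case (and the reduction of $W$ to a subspace of the exact dimension $n-(j+k-1)+1$ required by the min--max formula), but both are routine linear algebra; no analytic estimates are needed. Since the lemma is a classical Weyl-type inequality (see \cite[Section 3.3]{HoJo}), one could alternatively just cite that reference, which is what the paper does in practice.
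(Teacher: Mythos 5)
Your proof is correct. Note that the paper does not actually prove Lemma \ref{lemma:singularvalues}: the appendix explicitly collects these facts from \cite[Section 3.3]{HoJo} and states them without argument, so there is no in-paper proof to compare against. Your self-contained min--max argument is the standard one for these Weyl--Fan-type inequalities, and both halves check out: the intersection bound $\dim(V_\A \cap V_\B) \geq n-(j+k-1)+1$ handles the sum, and the preimage/codimension bound for $W = \{x \in V_\B : \B x \in V_\A\}$ handles the product (with the order of composition matching $\A\B$, i.e.\@ first apply $\B$, landing in $V_\A$, then apply $\A$). The only point you leave implicit is the range of indices: the lemma is asserted for all $j,k = 1,\hdots,n$, so $j+k-1$ may exceed $n$, in which case there is no subspace of dimension $n-(j+k-1)+1$ and the min--max formula does not apply; but the paper's convention $s_i(\A) := 0$ for $i > n$, stated just before the lemma, makes both inequalities trivial in that case since the left-hand sides vanish and the right-hand sides are nonnegative. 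With that one-line remark added, the proof is complete.
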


\begin{corollary}
\label{cor:singularvalues}
For all $p > 0$,
$
\sum_{j=1}^{n} s_j^p(\A+\B) \leq C_p \left( \sum_{j=1}^{n} s_j^p(\A) + \sum_{j=1}^{n} s_j^p(\B) \right) \,,
$
where $C_p$ is a positive constant depending only on $p$.
\end{corollary}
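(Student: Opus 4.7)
The plan is to exploit Lemma~\ref{lemma:singularvalues} with the diagonal choice $k=j$, which yields the pointwise bound $s_{2j-1}(\A+\B) \leq s_j(\A) + s_j(\B)$, and then combine this with the scalar inequality $(a+b)^p \leq c_p (a^p + b^p)$ valid for $a,b \geq 0$, where $c_p := \max\{1, 2^{p-1}\}$.

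First I would split the sum on the left into its odd- and even-indexed parts and use monotonicity of singular values to control the even-indexed terms by odd-indexed ones:
\begin{equation*}
\sum_{j=1}^{n} s_j^p(\A+\B) \,\leq\, 2 \sum_{j=1}^{\lceil n/2 \rceil} s_{2j-1}^p(\A+\B) \,.
\end{equation*}
Applying Lemma~\ref{lemma:singularvalues} with $k=j$ to each term on the right (and using the convention $s_j = 0$ for $j > n$) gives $s_{2j-1}^p(\A+\B) \leq (s_j(\A) + s_j(\B))^p$.

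Next I would invoke the scalar inequality $(s_j(\A) + s_j(\B))^p \leq c_p \bigl( s_j^p(\A) + s_j^p(\B) \bigr)$, which is immediate from convexity of $x \mapsto x^p$ when $p \geq 1$ and from subadditivity of $x \mapsto x^p$ when $0 < p < 1$. Summing over $j$ and absorbing the factor $2 c_p$ into the constant yields
\begin{equation*}
\sum_{j=1}^{n} s_j^p(\A+\B) \,\leq\, 2 c_p \sum_{j=1}^{\lceil n/2 \rceil} \bigl( s_j^p(\A) + s_j^p(\B) \bigr) \,\leq\, C_p \Bigl( \sum_{j=1}^{n} s_j^p(\A) + \sum_{j=1}^{n} s_j^p(\B) \Bigr) \,,
\end{equation*}
with $C_p := 2 \max\{1, 2^{p-1}\} = \max\{2, 2^p\}$, depending only on $p$.

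There is no real obstacle here; the only subtlety is the case split $p \geq 1$ versus $0 < p < 1$ in the scalar inequality, and the minor bookkeeping to handle the odd/even indexing when $n$ is not even (which the convention $s_j = 0$ for $j > n$ takes care of). Everything else is a direct consequence of the preceding lemma.
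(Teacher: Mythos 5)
Your proof is correct and is precisely the standard derivation the paper intends (the corollary is stated without proof as a consequence of Lemma~\ref{lemma:singularvalues}, citing Horn--Johnson): the diagonal case $s_{2j-1}(\A+\B) \leq s_j(\A)+s_j(\B)$, the odd/even splitting via monotonicity of singular values, and the scalar inequality $(a+b)^p \leq \max\{1,2^{p-1}\}(a^p+b^p)$ are exactly the required ingredients. The explicit constant $C_p = \max\{2,2^p\}$ and the case split at $p=1$ are both handled correctly.
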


\pagebreak[2]

\subsection{Bounds on Small Singular Values} \
In this section we cite some \linebreak stochastic bounds for small singular values from the literature.
We always assume that the matrices $\X_n$ are Girko-Ginibre matrices
as in \eqref{eq:girko-ginibre} -- \eqref{eq:uniform-integrability}.

\begin{lemma}[{\cite[Theorem 4.1]{GT:2010a}}]
\label{lemma:C1}
 Suppose that conditions \eqref{eq:girko-ginibre} -- \eqref{eq:uniform-integrability} hold. 
 Then, for any fixed $\alpha \in \mathbb{C}$,
 there exist positive constants $A$ and $B$ such that
 \begin{equation}\notag
  \Pr\{s_n(\X_n-\alpha\I_n)\le n^{-A}\}\le n^{-B}.
 \end{equation}
\end{lemma}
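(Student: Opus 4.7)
The plan is to follow the Rudelson--Vershynin / Tao--Vu strategy for small-ball probabilities of the smallest singular value, adapted to the Girko--Ginibre setting under only the uniform square-integrability assumption \eqref{eq:uniform-integrability}. Setting $\M_n := \X_n - \alpha\I_n$, I would start from the variational characterization $s_n(\M_n) = \min_{v \in S^{n-1}} \|\M_n v\|$ and partition $S^{n-1}$ into \emph{compressible} unit vectors (those within Euclidean distance $\rho$ of some $k$-sparse unit vector, for appropriate constants $\rho \in (0,1)$ and $k = \lfloor \delta n \rfloor$) and \emph{incompressible} unit vectors. The advantage of this decomposition is that the two parts require completely different arguments whose failure probabilities are easy to combine.

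For compressibles a net argument suffices: a volumetric estimate provides an $\varepsilon$-net of size $\exp(Cn)$, and on any fixed unit vector a truncation of the entries at level $\tau_n\sqrt{n}$ combined with a one-dimensional small-ball estimate yields $\Pr(\|\M_n v\| \le t) \le (Ct)^{cn}$. The polynomial operator-norm tail of $\X_n$ (which follows from \eqref{eq:uniform-integrability} via $\ee \|\X_n\|_2^2 = O(n)$) extends the individual bound over the net with failure probability $\exp(-c'n)$, comfortably smaller than any $n^{-B}$. For incompressibles I would invoke the invertibility-via-distance reduction
\[
\min_{v \text{ incompressible}} \|\M_n v\| \;\ge\; \frac{c}{\sqrt n}\, \min_{1 \le j \le n} \operatorname{dist}(R_j, H_j),
\]
where $R_j$ is the $j$-th row of $\M_n$ and $H_j$ is the span of the remaining rows. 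Conditioning on $H_j$ and choosing a unit normal $\nu$ to $H_j$ gives $\operatorname{dist}(R_j, H_j) \ge |\langle R_j,\nu\rangle|$; one shows that $\nu$ is itself incompressible with probability $1 - n^{-B'}$ and then applies a Littlewood--Offord / Berry--Esseen anti-concentration estimate to the truncated row to conclude $\Pr(|\langle R_j,\nu\rangle| \le n^{-A'}) \le n^{-B''}$. A union bound over $j = 1,\dots,n$ completes the estimate.

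The main obstacle is the anti-concentration step under the minimal moment hypothesis \eqref{eq:uniform-integrability}: classical Littlewood--Offord results require a uniform lower bound on the Kolmogorov concentration function of the entries, which is not directly available here. One overcomes this by the truncation $X_{jk} \mapsto X_{jk}\,\pmb{1}_{\{|X_{jk}| \le \tau_n\sqrt n\}}$ used throughout Section~\ref{sub:framework}: the truncated entries have mean essentially $0$, variance essentially $1$, and, being uniformly bounded, a quantitative small-ball exponent, so Esseen smoothing applied to the characteristic function of $\langle R_j,\nu\rangle$ yields an anti-concentration estimate polynomial in $n^{-1}$ controlled by the least common denominator of $\nu$ (polynomially large on incompressibles). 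A secondary technical point is showing that a generic unit normal $\nu$ to $H_j$ is indeed incompressible; this is itself a Tao--Vu type statement whose proof reuses the compressible analysis above in a bootstrapped form. Finally, the deterministic shift by $-\alpha\I_n$ only translates each row by $-\alpha \e_j$, which is absorbed into the linear form $\langle R_j,\nu\rangle$ without disrupting any of the above steps, so the final probability bound $n^{-B}$ holds uniformly in the fixed parameter $\alpha \in \mycmplx$.
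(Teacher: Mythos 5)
The paper does not prove this lemma at all: it is imported verbatim as \cite[Theorem 4.1]{GT:2010a}, and your outline is essentially a reconstruction of the proof in that cited source --- the Rudelson--Vershynin/Tao--Vu splitting of the sphere into compressible and incompressible vectors, a net-plus-tensorization bound for the former, the distance-to-hyperplane reduction and a Littlewood--Offord/Esseen anti-concentration estimate for the latter, with truncation at level $\tau_n\sqrt n$ supplying the small-ball input under the minimal hypothesis \eqref{eq:uniform-integrability}. So in substance your route is the intended one. One caveat: under \eqref{eq:uniform-integrability} alone you cannot claim $\|\X_n\|_2 \le C\sqrt n$ with failure probability $e^{-c'n}$; the Hilbert--Schmidt/Markov bound only gives a polynomial operator-norm bound with polynomial failure probability, so the mesh of your net must shrink polynomially in $n$, the net grows to size $e^{Cn\log n}$, and the compressible part ends up contributing a failure probability that is merely $O(n^{-B})$ rather than exponentially small. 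Since the lemma only asserts a bound of the form $n^{-B}$, this does not break the argument, but the sentence claiming an $\exp(-c'n)$ failure probability for the compressible case should be weakened accordingly.
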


\begin{lemma}[{\cite[Lemma 5.2]{GT:2010b}}]
\label{lemma:C2}
 Suppose that conditions \eqref{eq:girko-ginibre} -- \eqref{eq:uniform-integrability} hold. 
 Then, for any fixed $\alpha \in \mathbb{C}$,
 there exists a constant $0<\gamma<1$ such that for any sequence $\delta_n \to0 $, 
 \begin{equation}\notag
  \lim_{n\to\infty}\frac1n\sum_{{n_1}\le j\le{ n_2}} |\log s_j(\X_n-\alpha\I_n)| =0 \quad \text{almost surely},
 \end{equation}
 with $n_1=[n-n\delta_n]+1$ and $n_2=[n-n^{\gamma}]$.
\end{lemma}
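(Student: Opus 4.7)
The plan is to control $\frac{1}{n}\sum_{n_1 \le j \le n_2} |\log s_j(\X_n - \alpha\I_n)|$ by splitting the logarithm into positive and negative parts, and to reduce the heart of the matter to a uniform polynomial lower bound on the intermediate small singular values of $\A := \X_n - \alpha\I_n$. The positive part is handled by the almost-sure boundedness of the operator norm: by assumption \eqref{eq:uniform-integrability} and a standard truncation argument, $\|\X_n\| \le C$ eventually a.s., so $s_j(\A) \le C + |\alpha|$ for all $j$, hence
\begin{align*}
\frac{1}{n}\sum_{n_1 \le j \le n_2}\log^+ s_j(\A) \;\le\; \frac{n_2 - n_1}{n}\log(C+|\alpha|) \;\le\; \delta_n \log(C+|\alpha|) \;\to\; 0.
\end{align*}

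For the negative part, I would aim to establish the following uniform lower bound on intermediate singular values: there exist constants $A', B' > 0$ and some $\gamma \in (0,1)$ such that with probability at least $1 - n^{-B'}$,
\begin{align*}
s_{n-i+1}(\A)\;\ge\;\frac{c\,i}{n^{1+A'}}\qquad\text{for every } i\in[n^{\gamma},n].
\end{align*}
Setting $i = n-j+1$ so that $i$ ranges over $[n^{\gamma}, n\delta_n]$, this gives
\begin{align*}
\frac{1}{n}\sum_{n_1\le j\le n_2}\log^- s_j(\A)\;\le\;\frac{1}{n}\sum_{i=n^{\gamma}}^{n\delta_n}\bigl[(1+A')\log n-\log i\bigr]+O(\delta_n),
\end{align*}
and the right-hand side is comparable to $\int_{0}^{\delta_n}\log(1/u)\,du + A'\delta_n\log n = \delta_n(1+\log(1/\delta_n)) + A'\delta_n\log n$. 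The first summand vanishes with $\delta_n$, and the apparently dangerous second summand can be absorbed by a diagonal/subsequence argument: since the statement needs to hold for any $\delta_n\to 0$, one may first prove the $\log n$-free bound by choosing $\gamma$ close to $1$ and working range by range, or invoke monotonicity of the partial sums in the index range.

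To derive the key intermediate bound, I would apply the Tao--Vu negative second moment identity $\sum_{j}s_j(\B)^{-2}=\sum_k\mathrm{dist}(R_k(\B),H_k(\B))^{-2}$ not only to $\B=\A$ itself but to all principal submatrices $\B$ obtained by retaining any $n-i$ rows and columns. Each such submatrix is (after the shift) a Girko--Ginibre matrix of size $(n-i)\times(n-i)$ perturbed by $-\alpha\I_{n-i}$, whose smallest singular value is bounded below by $n^{-A'}$ via Lemma~\ref{lemma:C1}, while each row-to-hyperplane distance is bounded below via Rudelson--Vershynin type estimates (the $k$-th row has independent coordinates apart from the deterministic shift, so the distance to any $(n-1)$-dimensional subspace admits small-ball control). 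A union bound over the (polynomially many after dyadic coarse-graining) relevant submatrices, organized by a pigeonhole/dyadic decomposition over $i \in \{2^m\}$, transfers the smallest-singular-value control into a lower bound on $s_{n-i+1}(\A)$ of the desired linear-in-$i$ shape. Almost-sure convergence then follows from Borel--Cantelli once the failure probabilities are made summable by choosing $B'$ sufficiently large (polynomial moments of $X_{jk}$ are enough in the combination of estimates used), possibly after passing to a subsequence $n_j = 2^j$ and interpolating via the Lipschitz dependence of $s_j$ on the matrix entries.

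The main obstacle is precisely the sharp linear-in-$i$ lower bound on $s_{n-i+1}(\A)$: the direct order-statistics consequence of the negative second moment identity, combined with the uniform lower bound $\mathrm{dist}(R_k,H_k) \ge n^{-A'}$, yields only $s_{n-i+1}(\A)\gtrsim \sqrt{i}/n^{1/2+A'}$, which produces a contribution of order $\delta_n\log n$ that need not vanish for arbitrary $\delta_n\to 0$. Closing the $\sqrt{i}\mapsto i$ gap---achieved in \cite{GT:2010b} via the submatrix / compressible--incompressible decomposition originating from Rudelson--Vershynin and Tao--Vu---is the substantive technical step, and the shift by $\alpha\I_n$ forces one to condition carefully on the $k$-th row and column to preserve the independence structure required for the underlying small-ball estimates.
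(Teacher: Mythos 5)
First, note that the paper itself does not prove this lemma: it imports it verbatim as Lemma 5.2 of \cite{GT:2010b}, so your proposal has to be measured against the argument in that reference (the G\"otze--Tikhomirov/Tao--Vu scheme). Your skeleton --- split $|\log s_j|$ into positive and negative parts, and control the negative part by a uniform lower bound on the intermediate singular values of $\A=\X_n-\alpha\I_n$ via the negative second moment identity applied to submatrices --- is the right one, but two concrete steps fail. For the positive part, the claim that \eqref{eq:uniform-integrability} yields $\|\X_n\|\le C$ eventually a.s.\ is false: bounded operator norm requires fourth moments (Bai--Yin), and under second moments alone $\|\X_n\|$ can diverge (entries with tail $t^{-3}$ already give $\|\X_n\|\gtrsim\max_{j,k}|X_{jk}|/\sqrt{n}\gg n^{1/6}$). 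The correct route is through Condition $(C0)$ with $p=2$, i.e.\ $\tfrac1n\trace(\X_n\X_n^*)=\tfrac1{n^2}\sum_{j,k}|X_{jk}|^2\to1$, combined with the estimate $\tfrac1n\sum_{n_1\le j\le n_2}\log^+ s_j\le\delta_n\log M+M^{-p}\log M\cdot\tfrac1n\sum_j s_j^p$ with $M=\delta_n^{-1}$ --- the same computation the paper carries out in the proof of Lemma \ref{lemma:C-base}.

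The more serious gap is in the negative part. The target bound you set yourself, $s_{n-i+1}(\A)\ge c\,i/n^{1+A'}$, is quantitatively too weak: as you yourself compute, it leaves a residual term $A'\delta_n\log n$, which does \emph{not} vanish for arbitrary $\delta_n\to0$ (take $\delta_n=1/\log n$). None of your proposed repairs works: choosing $\gamma$ close to $1$ only affects the lower end $i\ge n^{\gamma}$ of the range, while the offending contribution comes from the upper end $i\approx n\delta_n$; and monotonicity in $\delta_n$ cannot convert a bound $A'\delta_n\log n$ into one of the form $h(\delta_n)$ with $h(u)\to0$ uniformly in $n$, since there is no slowest admissible sequence $\delta_n\to0$. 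What is actually needed, and what \cite{GT:2010b} proves, is $s_{n-i+1}(\A)\ge c\,i/n$ with \emph{no} polynomial loss, uniformly over $n^{\gamma}\le i\le n$ off an event of summable probability; this gives $\tfrac1n\sum\log^- s_j\lesssim\delta_n\log(1/\delta_n)+O(\delta_n)\to0$. The mechanism is not a smallest-singular-value bound for submatrices plus small-ball estimates (which is exactly where your $n^{-A'}$ loss enters), but concentration of the distance from a row to a subspace of \emph{codimension} $\approx i$: deleting $m=\lceil i/2\rceil$ rows of $\A$ to form $\B$, interlacing gives $s_{n-i}(\A)\ge s_{n-i}(\B)$, the negative second moment identity gives $\tfrac{i}{2}\,s_{n-i}(\B)^{-2}\le\sum_k\mathrm{dist}(R_k,H_k)^{-2}$ with each $H_k$ of codimension $\ge i/2$, and $\mathrm{dist}(R_k,H_k)\gtrsim\sqrt{i/n}$ with failure probability small enough to union-bound over $k$ and over dyadic $i\ge n^{\gamma}$ --- which is precisely where the cutoff $n_2=[n-n^{\gamma}]$ originates. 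Since you explicitly defer this step, which is the entire content of the cited lemma, the proposal as written does not constitute a proof.
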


More generally, these results hold with $\alpha \I_n$ replaced by $\M_n$,
where $(\M_n)_{n \in \mynat}$ is a sequence of deterministic matrices
which is polynomially bounded in operator norm;
see Section~5 in \cite{GT:2010b} for details:

\begin{lemma}
\label{lemma:C1x}
 Suppose that conditions \eqref{eq:girko-ginibre} -- \eqref{eq:uniform-integrability} hold. 
 Then, for any fixed $K>0$ and $L>0$,
 there exist positive constants $A$ and $B$ such that for any non-random matrix $\M_n$
 with $\|\M_n\|_2 \leq Kn^{L}$, we have
 \begin{equation}\notag
  \Pr\{s_n(\X_n-\M_n)\le n^{-A}\}\le n^{-B}.
 \end{equation}
\end{lemma}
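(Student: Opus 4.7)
The plan is to adapt the proof of Lemma \ref{lemma:C1} from \cite{GT:2010a} essentially verbatim, observing that the key probabilistic tools are either insensitive to the additive shift or only degrade polynomially in $\|\M_n\|_2$. The starting point is the identity $s_n(\X_n - \M_n) = \min_{\|x\|_2 = 1} \|(\X_n - \M_n) x\|_2$, together with the Rudelson--Vershynin dichotomy between compressible and incompressible unit vectors (a vector $x$ is $(\delta,\rho)$-compressible if it is within Euclidean distance $\rho$ of a vector supported on at most $\delta n$ coordinates, and incompressible otherwise).

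For the compressible case, I would use a net argument. The set of $(\delta, \rho)$-compressible unit vectors admits a net of size $\exp(c_1 \delta n \log(1/\delta\rho))$, and for each fixed $x$ in such a net the random vector $\X_n x$ has independent coordinates with unit variance (after rescaling by $\sqrt n$), so by the uniform square-integrability \eqref{eq:uniform-integrability} and a standard small-ball estimate one gets $\Pr\{\|(\X_n - \M_n)x\|_2 \le c\} \le \exp(-c_2 n)$. By choosing $\delta$ small, the exponential tail beats the net cardinality; the shift $\M_n x$ is harmless because the coordinate-wise small-ball inequality is translation-invariant, and only the diameter of the image $\{\M_n x : x \in \text{net}\}$ enters (which is polynomial in $n^L$, hence negligible against the exponential decay).

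For the incompressible case, I would use the standard ``invertibility via distance'' reduction: up to a constant, $s_n(\X_n - \M_n)$ on incompressible vectors is bounded below by $n^{-1/2}$ times $\min_{k} \mathrm{dist}(R_k, H_k)$, where $R_k$ is the $k$-th row of $\X_n - \M_n$ and $H_k$ is the span of the other rows. Conditionally on $H_k$, writing $R_k = \tfrac{1}{\sqrt n} X_k - m_k$ with $m_k$ the $k$-th row of $\M_n$ and $v$ a unit normal to $H_k$, the distance equals $|\langle v, \tfrac{1}{\sqrt n} X_k\rangle - \langle v, m_k\rangle|$. The Rudelson--Vershynin small-ball estimate (in the form available in \cite{GT:2010a} under \eqref{eq:real-moments}--\eqref{eq:uniform-integrability}) gives $\Pr\{|\langle v, \tfrac{1}{\sqrt n} X_k\rangle - t| \le n^{-A}\} \le n^{-B'}$ uniformly in $t \in \myreal$, provided $v$ has the spread/LCD structure that holds with high probability for unit normals in the incompressible regime. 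A union bound over $k=1,\ldots,n$ then yields the lemma with suitable constants $A, B$.

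The main obstacle is the $\M_n$-dependent translation $\langle v, m_k\rangle$, which could be as large as $K n^L$; however, the essential point is that the small-ball inequality for $\langle v, \tfrac{1}{\sqrt n} X_k\rangle$ is \emph{translation-invariant} in $t$, so the polynomial bound $\|\M_n\|_2 \le K n^L$ never needs to be fought — it only affects the compressible net's diameter, where there is exponential slack. The structural claim that the normal vector $v$ is incompressible and has polynomially large least common denominator with high probability depends only on the distribution of $\X_n$, not on the shift $\M_n$, so it transfers from \cite{GT:2010a,GT:2010b} without modification. This is the step where one must be careful, since formally $v$ depends on $\M_n$ through the rows of $\X_n - \M_n$ spanning $H_k$, but since $\M_n$ is deterministic one may absorb it by working with the affine span and using the rotational quasi-invariance arguments already present in the original proof.
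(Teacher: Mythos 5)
Your outline matches the argument the paper relies on: the paper gives no proof of Lemma~\ref{lemma:C1x} at all, but simply cites \cite[Theorem 4.1]{GT:2010a} and Section~5 of \cite{GT:2010b}, where the bound for $\X_n-\M_n$ with $\|\M_n\|_2\le Kn^L$ is obtained by exactly this compressible/incompressible decomposition, the distance-to-hyperplane reduction, and translation-invariant small-ball (anti-concentration) estimates, with the polynomial operator-norm bound on $\M_n$ entering only through the mesh of the net. The one step to state more carefully is your last one: the incompressibility of the unit normal $v$ does not ``transfer without modification'' via rotational quasi-invariance, but rather follows because $v$ is annihilated by the $(n-1)\times n$ submatrix of the \emph{shifted} matrix, so one applies the compressible-case lower bound (which your first part already establishes for $\X_n-\M_n$, and which extends to such submatrices) to conclude that a compressible normal is impossible outside an event of negligible probability.
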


\begin{lemma}
\label{lemma:C2x}
 Suppose that conditions \eqref{eq:girko-ginibre} -- \eqref{eq:uniform-integrability} hold. 
 Then, for any fixed $K>0$ and $L>0$,
 there exists a constant $0<\gamma<1$ such that for any non-random matrices $\M_n$
 with $\|\M_n\|_2 \leq Kn^{L}$ and for any sequence $\delta_n \to0 $, 
 \begin{equation}\notag
  \lim_{n\to\infty}\frac1n\sum_{{n_1}\le j\le{ n_2}} \log^- s_j(\X_n-\M_n)=0 \quad \text{almost surely},
 \end{equation}
 with $n_1=[n-n\delta_n]+1$ and $n_2=[n-n^{\gamma}]$.
\end{lemma}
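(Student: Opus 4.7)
My plan is to adapt the proof of Lemma 5.2 in \cite{GT:2010b} (which is Lemma~\ref{lemma:C2} above, the special case $\M_n = \alpha \I_n$) to arbitrary polynomially bounded deterministic shifts. The two guiding observations are that: (i)~the random part of $\X_n - \M_n$ still has independent entries satisfying \eqref{eq:real-moments}--\eqref{eq:uniform-integrability}, so~the~entries differ from those of $\X_n$ only by deterministic offsets; and (ii)~the quantitative small-singular-value inputs used in \cite{GT:2010b} — distance-to-subspace estimates and small-ball probabilities for weighted sums of independent random variables — are \emph{translation-invariant}: a Rogozin / Rudelson--Vershynin small-ball bound for $\sum_k a_k X_k^{(q)} - c$ is controlled by the same bound as for $\sum_k a_k X_k^{(q)}$, uniformly in $c \in \mycmplx$. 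Hence all the distance-based arguments survive verbatim when the (deterministic) subspace is~shifted by a column of $\M_n$.

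Concretely, I would first establish the following quantitative bound: there exist constants $A,B,c,\varepsilon > 0$ (depending on $K,L$ and the integrability constants in \eqref{eq:uniform-integrability}, but not on the particular sequence $(\M_n)$) such that for every $i$ with $n^\gamma \le i \le n\delta_n$,
\begin{align}
\label{eq:C2x-key}
\pp\bigl( s_{n-i+1}(\X_n - \M_n) \le n^{-A} (i/n)^B \bigr) \le n^{-c i^\varepsilon} .
\end{align}
This is the natural analogue of the key estimate in \cite{GT:2010b}, obtained by~writing $s_{n-i+1}$ as the distance from a column of $\X_n - \M_n$ to the span of $n-i$~other columns, then applying the Rudelson--Vershynin-type distance bound for a~random vector with independent entries to a fixed (deterministic) affine subspace, with a~union bound over appropriately discretized choices of such subspaces. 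Lemma~\ref{lemma:C1x} provides the base case $i=1$ for the very smallest singular value.

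Granting \eqref{eq:C2x-key}, the conclusion follows by splitting the sum according to whether $s_{n-i+1}(\X_n-\M_n)$ exceeds the threshold $n^{-A}(i/n)^B$. Writing $j = n-i+1$,
\begin{align}
\tfrac{1}{n} \!\! \sum_{n_1 \le j \le n_2} \!\! \log^- s_j(\X_n - \M_n) = \tfrac{1}{n} \!\! \sum_{n^\gamma \le i \le n \delta_n} \!\! \log^- s_{n-i+1}(\X_n - \M_n) .
\end{align}
On the good event of \eqref{eq:C2x-key}, the summand is at most $A\log n + B\log(n/i)$, whose normalized sum is bounded by $O(\delta_n \log n)$; by replacing $\delta_n$ with $\max\{\delta_n, 1/\log^2 n\}$ and choosing $\gamma$ sufficiently close to $1$ one arranges that this is $o(1)$. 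On the exceptional event, summability of $\sum_{i \ge n^\gamma} n^{-c i^\varepsilon}$ together with Borel--Cantelli promotes the negligibility of the exceptional contribution to almost-sure convergence.

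The main obstacle is genuine uniformity of the constants $A$, $B$, $c$, $\varepsilon$, $\gamma$ in the deterministic matrix $\M_n$; a naive approach would see these constants deteriorate with $\|\M_n\|_2$. This is resolved by observing that in the distance argument the shift $\M_n$ enters only through the affine displacement of a subspace whose norm is bounded by $Kn^L$, and such a displacement enters the small-ball bound only through an additional polynomial factor that can be absorbed into $n^{-A}$ by enlarging $A$ in terms of $L$ alone. Apart from this bookkeeping, the proof is a faithful translation of the argument in Section~5 of~\cite{GT:2010b}.
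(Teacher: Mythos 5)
Your overall strategy is the right one, and it is essentially the one the paper relies on: the paper does not prove Lemma~\ref{lemma:C2x} itself but defers to Section~5 of \cite{GT:2010b}, and the argument there is precisely the combination you describe --- a distance-to-subspace representation of the small singular values, small-ball/concentration bounds for the distance of a row with independent entries to a fixed subspace (which are indeed insensitive to the deterministic shift coming from $\M_n$, up to losses controlled by $\|\M_n\|_2 \leq Kn^{L}$), and a Borel--Cantelli argument. (A side remark: no union bound over discretized subspaces is needed for the intermediate singular values; conditioning on the remaining columns already makes the relevant subspace deterministic.)

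There is, however, a genuine gap in the final summation step, and it originates in the form of your key estimate. If the guaranteed lower bound on $s_{n-i+1}(\X_n - \M_n)$ carries a factor $n^{-A}$ with a fixed $A>0$, then on the good event each summand is bounded only by $A\log n + B\log(n/i)$, and since the sum ranges over roughly $n\delta_n$ indices, the normalized sum is of order $\delta_n \log n$. The lemma must hold for \emph{every} sequence $\delta_n \to 0$, in particular for $\delta_n = 1/\log\log n$, for which $\delta_n \log n \to \infty$. Your proposed remedies do not repair this: replacing $\delta_n$ by $\max\{\delta_n, 1/\log^2 n\}$ only enlarges the (nonnegative) sum, and choosing $\gamma$ close to $1$ affects only the lower end $i \geq n^{\gamma}$ of the summation range, whereas the divergence comes from the upper end $i \sim n\delta_n$. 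What is actually needed --- and what the arguments of \cite{GT:2010b} and the Tao--Vu-type intermediate singular value bounds provide --- is an estimate of the form $s_{n-i+1}(\X_n - \M_n) \geq c\,(i/n)^{C}$ with constants $c, C$ \emph{independent of $n$} throughout the bulk of the range; this yields a normalized sum of order $\delta_n \log(1/\delta_n) \to 0$. A polynomial-in-$n$ loss such as your $n^{-A}$ is affordable only for the $O(n^{\gamma'})$, $\gamma' < 1$, smallest indices, where it contributes $O(n^{\gamma'-1}\log n) = o(1)$ after normalization. Until your key estimate is strengthened to this two-regime form, the proof does not close.
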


\bigskip

\end{document}